\definecolor{darkgreen}{rgb}{0.0, 0.7, 0.0}
\definecolor{purple}{rgb}{0.5, 0.0, 0.5}
\definecolor{red}{rgb}{0.8, 0.2, 0.0}
\newenvironment{sis}{\left\{\begin{aligned}}{\end{aligned}\right.}
\newtheorem{thm}{Theorem}[section]
\newtheorem{lemma}[thm]{Lemma}
\newtheorem{prop}[thm]{Proposition}
\newtheorem{cor}[thm]{Corollary}
\newtheorem{fact}[thm]{Fact}
\newtheorem{theoremalpha}{Theorem}
\numberwithin{equation}{section}
\theoremstyle{definition}
\newtheorem{defi}[thm]{Definition}
\newtheorem*{question*}{Question}
\theoremstyle{remark}
\newtheorem{remark}[thm]{Remark}
\newtheorem{example}[thm]{Example}
\newcommand{\Z}{\mathbb{Z}}
\newcommand{\R}{\mathbb{R}}
\newcommand{\Pic}{\operatorname{Pic}}
\newcommand{\ov}{\overline}
\def \Im{{\rm Im}}
\def \PP{\mathbb{P}}
\def \L{\mathcal L}
\def\O{\mathcal O}
\def\M0{\mathcal M^0}
\DeclareMathOperator{\codim}{{codim}}
\DeclareMathOperator{\Sym}{{Sym}}
\DeclareMathOperator{\contr}{{contr}}
\DeclareMathOperator{\AJ}{{AJ}}
\DeclareMathOperator{\AJt}{{^{\rm t} \hskip -.08cm \operatorname{AJ}}}
\DeclareMathOperator{\Kum}{{Kum}}
\newcommand{\bbN}{{\mathbb N}}
\newcommand{\bbQ}{{\mathbb Q}}
\newcommand{\g}{\mathfrak{g}}
\renewcommand{\l}{\mathfrak{l}}
\newcommand{\gon}{\operatorname{gon}}
\newcommand{\Eff}{\operatorname{Eff}}
\newcommand{\Pseff}{\operatorname{Pseff}}
\newcommand{\Nef}{\operatorname{Nef}}
\newcommand{\Amp}{\operatorname{Amp}}
\newcommand{\Efft}{{^{\rm t} \hskip -.05cm \operatorname{Eff}}}
\newcommand{\Psefft}{{^{\rm t} \hskip -.05cm \operatorname{Pseff}}}
\newcommand{\Neft}{{^{\rm t} \hskip -.05cm \operatorname{Nef}}}
\newcommand{\cone}{\operatorname{cone}}
\title{Effective cycles on the symmetric product of a curve, II: the Abel-Jacobi faces}
\author{Francesco Bastianelli, Alexis Kouvidakis, Angelo Felice Lopez and Filippo Viviani}
\thanks{Research partially supported by INdAM (GNSAGA) and by the MIUR national projects``Geometria delle variet\`a algebriche" PRIN 2010-2011 and ``Spazi di moduli e applicazioni" FIRB 2012.}
\address{\hskip -.43cm Dipartimento di Matematica, Universit\`a degli Studi di Bari Aldo Moro, Via Edoardo Orabona, 4, 70125 Bari (Italy). e-mail {\tt francesco.bastianelli@uniba.it}}
\address{\hskip -.43cm Department of Mathematics and Applied Mathematics, University of Crete, GR-70013 Heraklion, Greece. e-mail {\tt kouvid@math.uoc.gr}}
\address{\hskip -.43cm Dipartimento di Matematica e Fisica, Universit\`a di Roma Tre, Largo San Leonardo Murialdo 1, 00146 Roma, Italy. e-mail {\tt lopez@mat.uniroma3.it, filippo.viviani@gmail.com}}
\begin{document}

\keywords{}
\subjclass[2010]{14C25, 14H51, 14C20.}

\begin{abstract}
In this paper, which is a sequel of \cite{BKLV}, we study the convex-geometric properties of the cone of pseudoeffective $n$-cycles in the symmetric product $C_d$ of a smooth curve $C$. We introduce and study the Abel-Jacobi faces, related to the contractibility properties of the Abel-Jacobi morphism and to classical Brill-Noether varieties. 
We investigate when Abel-Jacobi faces are non-trivial, and we prove that for $d$ sufficiently large (with respect to the genus of $C$) they form a maximal chain of perfect faces of the tautological pseudoeffective cone (which coincides with the pseudoeffective cone if $C$ is a very general 
curve). 
\end{abstract}

\maketitle

\section{Introduction}
\label{intro}

The study of the cone of ample or nef divisors, up to numerical equivalence, on a projective variety $X$ is a basic and classical tool in algebraic geometry, giving a lot of geometrical information about $X$. The dual cones in the space of $1$-cycles have also been deeply studied, giving rise, for example, to very important results in birational geometry and the minimal model program.

On the other hand, it is only in recent years that the study of higher dimensional (or codimensional) cycles has highlighted its role (see for example \cite{Pet, Voi, DELV, CC, DJV, Ful, fl2, fl1, fl3, Ott1, Ott2} to mention a few). 

One of the most striking features of higher codimensional cycles is that they behave in an unpredictable way, as there are examples of nef cycles with negative intersection \cite[Cor. 4.6]{DELV} or of nef cycles that are not pseudoeffective \cite[Cor. 2.2, Prop. 4.4]{DELV}, \cite[Thm. 1]{Ott3}. While one expects these phenomena not to be so special, there are so far only three examples of such varieties and 
it becomes therefore more interesting to investigate nef and pseudoeffective cycles for classical families of varieties, such as symmetric products of curves, as suggested in \cite[\S 6]{DELV}.

Let now $C$ be a smooth projective irreducible curve of genus $g$ and consider, for every $d \ge 2$, its $d$-fold
\color{black}symmetric product $C_d$,
which is the smooth projective variety parameterizing unordered $d$-tuples of points of C. \color{black}This is a very interesting smooth $d$-dimensional variety whose geometry has been deeply involved in the classical study of Brill-Noether theory \cite{GAC1} but also, in more recent years, in the investigation of cones of effective and nef divisors on it (an almost thorough recap of these results can be found in the introduction of \cite{BKLV}).

In this paper, which is a natural sequel of \cite{BKLV}, we study cones of pseudoeffective and nef cycles on $C_d$. To state our results we need to set up some notation and recall some well-known facts.

For $0 \le n \le d$, let $N_n(C_d)$ be the vector space of real $n$-cycles up to numerical equivalence. 
Inside this finite dimensional real vector space one can define several interesting cones, namely $\Eff_n(C_d)$, the \emph{cone of effective $n$-cycles}, its closure $\Pseff_n(C_d)$, the \emph{cone of pseudoeffective $n$-cycles} and $\Nef_n(C_d)$, the \emph{cone of nef $n$-cycles}, that is cycles $\alpha \in N_n(C_d)$ such that $\alpha \cdot \beta \ge 0$ for every $\beta \in \Eff_{d-n}(C_d)$. 

One crucial feature of $C_d$, that will play a very important role in this paper, is that it comes naturally with a well-known map, the Abel-Jacobi morphism
$$\alpha_d\colon C_d \to \Pic^d(C)$$ 
given by sending an effective divisor $D\in C_d$ to its associated line bundle $\O_C(D)\in \Pic^d(C)$.

On $C_d$, there are two important divisor classes, up to numerical equivalence:
$$x = [\{D \in C_d : D = p_0+D', D' \in C_{d-1}\}] \quad \text{and} \quad \theta=\alpha_d^*([\Theta]),$$
where $p_0$ is a fixed point of $C$ and $\Theta$ is any theta divisor on $\Pic^d(C)$. 
It is well-known that $x$ is ample while $\theta$ is clearly nef being the pull-back of an ample class. 

These two classes generate a graded subring $R^*(C_d)=\oplus_m R^m(C_d)$ of $N^*(C_d)$, which is called the \emph{tautological} ring of cycles, whose structure is well-understood (and independent of the given curve $C$), and which coincides with the full ring $N^*(C_d)$ if $C$ is a very general
curve, see \cite[Fact 2.6]{BKLV}. We will also consider the natural cones $\Efft_n(C_d)$ generated by \emph{tautological effective cycles of dimension $n$}, its closure $\Psefft_n(C_d)$, called the \emph{cone of tautological pseudoeffective cycles of dimension $n$}, and $\Neft_n(C_d)$, the \emph{cone of tautological nef cycles of 
dimension \color{black}$n$}. 

The main result of \cite{BKLV}, generalizing the case of divisors and curves, was to prove that the cone generated by $n$-dimensional diagonals is a rational polyhedral perfect face of $\Pseff_n(C_d)$ and that $\Pseff_n(C_d)$ is locally finitely generated at every non-zero element of that cone \cite[Thm. B]{BKLV}.
On one side this gives a very nice face of $\Pseff_n(C_d)$, but, on the other side, it opens the way to look for other faces.

In the case of divisors and curves, the situation is well-understood if $d$ is sufficiently large:
\begin{itemize}
\item the other extremal ray of $\Psefft^1(C_d)$ is generated by $\theta$ if and only if $d\geq g+1$. Indeed, 
$\theta$ is always pseudoeffective (being nef) and it is not in the interior of the pseudoeffective cone, i.e. it is not big, if and only if $\alpha_d$ is not birational into its image, which happens exactly when $d \geq g+1$.
\item The other extremal ray of $\Psefft_1(C_d)$ is generated by the ray dual to $\R_{\geq 0}\cdot \theta$ (or equivalently, $\theta$ generates an extremal ray of $\Neft^1(C_d)$) if and only if $d\geq \gon(C)$ where $\gon(C)$ is the gonality of $C$. Indeed, $\theta$ is always nef and it is not in the interior of the nef cone, i.e. it is not ample, if and only if $\alpha_d$ is not a finite morphism, which happens exactly when $d \geq \gon(C)$.
\end{itemize} 

The aim of this paper is to generalize the extremality properties of $\theta$ for $\Psefft^1(C_d)$ and $\Neft^1(C_d)$ to the case of cycles of intermediate codimension. 
As a matter of fact the other faces that we will find will all come from the contractibility properties of the Abel-Jacobi morphism, as we now explain.

Given any morphism $\pi : X \to Y$ between irreducible projective varieties, in \cite[\S 4.2]{fl2} was introduced the \emph{contractibility index} 
$\contr_{\pi}(\alpha)$ \color{black}of a class $\alpha\in \Pseff_k(X)$, for $0 \le k \le \dim X$, as the largest 
integer \color{black}$0 \le c \le k+1$ such that $\alpha\cdot \pi^*(h^{k+1-c})=0$, where $h$ is an ample class on $Y$.
This gives immediately rise, for every $r \ge 0$, to the \emph{contractibility faces} of $\Pseff_k(X)$:
$$F_k^{\geq r}(\pi) = \cone(\{\alpha \in \Pseff_k(X) : \contr_{\pi}(\alpha)\ge r \}).$$ 
The main question about them is to identify for which $r$ such that $1 + \max\{0, k-\dim \pi(X)\} \le r \le k$ we have that $F_k^{\geq r}(\pi)$ is non-trivial and, in that case, to compute its dimension and convex-geometrical properties.

With this in mind, for any $1+\max\{0,g-n\}\leq r \leq  n$, we define the \emph{Abel-Jacobi faces} 
$$\AJ_n^r(C_d) = F_n^{\ge r}(\alpha_d) \subseteq \Pseff_n(C_d).$$

From the general properties of the contractibility faces and the classical properties of the Brill-Noether varieties (which are reviewed in \S \ref{SS:BNvar}) $C_d^r:=\{D\in C_d: \: \dim |D|\geq r\}$, we prove in Proposition \ref{P:nonzero}  that:
\begin{itemize}
\item $\AJ_n^r(C_d)$ is non-trivial if $n \le \dim C_d^r$;
\item $\AJ_{\dim C_d^r}^r(C_d)$ is  the conic hull  of the irreducible components of $C_d^r$ of maximal dimension.
\end{itemize}
Intersecting with the tautological ring, we can also define the  \emph{tautological Abel-Jacobi faces} 
$$\AJt_n^r(C_d):=\AJ_n(C_d)\cap R_n(C_d)\subseteq \Psefft_n(C_d).$$






The following theorem (which combines Corollary \ref{C:nontriv}, Theorem  \ref{AJtaut} and Proposition \ref{P:facet}) specifies some numerical ranges where we can find 
non-trivial Abel-Jacobi faces, Abel-Jacobi facets and Abel-Jacobi extremal rays coming from well-known facts of Brill-Noether theory (on Brill-Noether general curves).


\begin{theoremalpha}\label{T:A}
Let $C$ be a curve of genus $g$. 
\begin{enumerate}[(1)]
\item \label{T:A1} If $1 \le n \le d-1$ and $\displaystyle d\geq \frac{n+g+1}{2}$ then  there 
exist non-trivial Abel-Jacobi faces of $\Pseff_n(C_d)$. The same is true for $\Psefft_n(C_d)$ if either $d \ge g+1$ or 
$\displaystyle d\geq \frac{n+g+1}{2}$ and $C$ is a Brill-Noether general curve, i.e. it satisfies the condition in Fact \ref{BNclass}\eqref{BNclass3}.
\item \label{T:A3} We have tautological Abel-Jacobi facets in the following ranges:
\begin{enumerate}[(i)]
\item If $g\leq n$ then $\AJt^{n+1-g}_n(C_d)$ is a facet of $\Psefft_n(C_d)$.
\item  If $n\leq g$ then $\AJt^1_n(C_d)$ is a facet of $\Psefft_n(C_d)$ under one of the following assumptions:
\begin{enumerate}[(a)]
\item $C$ admits a $\g_d^n$, i.e. a linear series of dimension $n$ and degree $d$   (which is always satisfied if $g\leq d-n$);

\item $n=g-1$;
\item \label{T:Ab3} $g\leq d$ and $C$ is very general over an uncountable base field $k$.
\end{enumerate}
\end{enumerate} 


\item \label{T:A2} Assume that 
$C$ is Brill-Noether general. 
Let $0\leq d\leq 2g-2$ and let $r$ be an integer such that $\max\{1,d-g+1\}\leq  r$ and $\rho:=\rho(g,r,d)=g-(r+1)(g-d+r)\geq 0$.
Then 
$$\AJ^r_{r+\rho}(C_d)=\AJt^r_{r+\rho}(C_d)=\cone([C_d^r]).$$  
 In particular, $[C_d^r]$ generates an extremal ray (called the \emph{BN(=Brill-Noether) ray}) of $\Pseff_{r+\rho}(C_d)$ and of $\Psefft_{r+\rho}(C_d)$. 
\end{enumerate}
\end{theoremalpha}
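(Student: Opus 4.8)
The plan is to prove Theorem \ref{T:A}\eqref{T:A2} by combining the general lower bound on Abel-Jacobi faces from Proposition \ref{P:nonzero} with the Brill-Noether dimension theorems and the structure of the tautological ring. First I would recall from \S\ref{SS:BNvar} that on a Brill-Noether general curve $C$, whenever $\rho(g,r,d)\ge 0$ and $r\ge \max\{1,d-g+1\}$, the Brill-Noether variety $C_d^r$ is non-empty, irreducible, reduced, of the expected dimension $\dim C_d^r = g-(r+1)(g-d+r)+r = r+\rho$; moreover $C_d^{r+1}$ has strictly smaller dimension, so $C_d^r$ is a genuine irreducible $(r+\rho)$-dimensional subvariety whose class $[C_d^r]\in N_{r+\rho}(C_d)$ is a single effective generator. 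Then the second bullet after Proposition \ref{P:nonzero} (the case $n=\dim C_d^r$) immediately gives $\AJ^r_{r+\rho}(C_d)=\cone([C_d^r])$, because the face in the top relevant dimension is the conic hull of the maximal-dimensional components of $C_d^r$, and here there is exactly one such component, namely $C_d^r$ itself. So the identity $\AJ^r_{r+\rho}(C_d)=\cone([C_d^r])$ is essentially formal once Brill-Noether irreducibility is invoked.

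The substantive point is the middle equality $\AJ^r_{r+\rho}(C_d)=\AJt^r_{r+\rho}(C_d)$, i.e. that this extremal ray is already tautological: one must show $[C_d^r]\in R_{r+\rho}(C_d)$, or equivalently that $\cone([C_d^r])\cap R_{r+\rho}(C_d)$ is all of $\cone([C_d^r])$ rather than just $\{0\}$. Here I would use the classical formula (going back to Kempf and Kleiman--Laksov, and recorded in \cite{GAC1}) expressing the fundamental class of $C_d^r$ as a polynomial in $x$ and $\theta$; concretely, under the Abel-Jacobi map the class of $W^r_d(C)$ in $\Pic^d(C)$ is the Poincaré-formula determinant $\det\!\bigl((\theta^{\,g-d+r+i-j}/(g-d+r+i-j)!)\bigr)_{1\le i,j\le r+1}$, and pulling back together with the interplay between $x$ and $\theta$ on $C_d$ yields $[C_d^r]$ as an explicit element of the tautological ring. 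Since $[C_d^r]$ is visibly non-zero (it is effective and $C_d^r\neq\emptyset$ of the right dimension), this places the whole ray inside $R_{r+\rho}(C_d)$, giving $\AJt^r_{r+\rho}(C_d)=\cone([C_d^r])$ as well. The ``in particular'' about extremal rays is then automatic: a one-dimensional face of a cone is by definition an extremal ray, and $\AJ^r_{r+\rho}(C_d)$ being a face of $\Pseff_{r+\rho}(C_d)$ (being a contractibility face, by the general theory recalled before the theorem) of dimension one forces $[C_d^r]$ to span an extremal ray; likewise for $\Psefft_{r+\rho}(C_d)$ using $\AJt^r_{r+\rho}(C_d)=\AJ^r_{r+\rho}(C_d)\cap R_{r+\rho}(C_d)$.

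I would organize the write-up as: (i) verify the hypotheses $r\ge\max\{1,d-g+1\}$ and $\rho\ge 0$ put us in the Brill-Noether range of Fact \ref{BNclass}, so $C_d^r$ is irreducible reduced of dimension exactly $r+\rho$ and $\dim C_d^{r+1}<r+\rho$; (ii) invoke Proposition \ref{P:nonzero} to conclude $\AJ^r_{r+\rho}(C_d)$ equals the conic hull of the maximal components of $C_d^r$, which is $\cone([C_d^r])$; (iii) quote the Kempf--Kleiman--Laksov/Poincaré-type formula to see $[C_d^r]\in R_{r+\rho}(C_d)\setminus\{0\}$, hence $\AJt^r_{r+\rho}(C_d)=\cone([C_d^r])$; (iv) deduce the extremal-ray statement from the fact that a nontrivial face of dimension $1$ is an extremal ray. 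The main obstacle I anticipate is purely bookkeeping rather than conceptual: making sure the numerical bound $\max\{1,d-g+1\}\le r$ together with $0\le d\le 2g-2$ really does land inside the exact hypotheses under which the Brill-Noether variety is known to be non-empty, irreducible, reduced and of expected dimension (the non-emptiness needs $\rho\ge 0$, the dimension statement needs Brill-Noether generality, and reducedness/irreducibility for $\rho\ge 1$ is Fulton--Lazarsfeld while the case $\rho=0$ gives a non-empty finite set of reduced points which is still a single class up to the monodromy action) — so the care is entirely in citing the right regime of \S\ref{SS:BNvar}, after which everything else is a direct application of the earlier results of this paper.
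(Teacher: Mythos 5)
Your proposal only addresses part (3) of the statement. Theorem \ref{T:A} has three parts, and in the paper parts (1) and (2) are proved separately (they are Corollary \ref{C:nontriv} and Proposition \ref{P:facet}, the latter resting on the $\theta$-filtration analysis of Propositions \ref{P:theta} and \ref{P:AJtheta} and, for case \eqref{T:Ab3}, on the kernel theorem of Fulger--Lehmann \cite{fl3}); nothing in your write-up touches these, so even granting your argument for the BN rays the proof of the stated theorem is incomplete.

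For part (3) itself your route is essentially the paper's (Theorem \ref{AJtaut}): apply Proposition \ref{P:nonzero}\eqref{BNextr2} in dimension $n=\dim C_d^r=r+\rho$ and use the Kempf--Kleiman--Laksov class formula (Fact \ref{BNclass}\eqref{BNclass2}) to see the class is tautological, hence in $\Efft\subseteq\Psefft$. But there is a genuine gap in the case $\rho=0$, which your main argument gets wrong: you assert that $C_d^r$ is irreducible whenever $\rho\geq 0$ and that ``there is exactly one such component''. By Fact \ref{BNclass}\eqref{BNclass3}, when $\rho=0$ the variety $C_d^r$ on a Brill--Noether general curve is a \emph{disjoint union} of $g!\prod_{i=1}^r \frac{i!}{(g-d+r+i)!}$ copies of $\PP^r$ (the Castelnuovo number, usually $>1$), so Proposition \ref{P:nonzero}\eqref{BNextr2} gives a priori the conic hull of several classes, and one must prove that all these components are numerically equivalent and tautological to conclude that this hull is the single ray $\cone([C_d^r])$. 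The paper does this by observing that each component is a fiber of $\alpha_d$, i.e.\ a complete $\g_d^r$, whose class equals that of the subordinate variety $\Gamma_d(\g_d^r)$ and hence is a fixed tautological class by Fact \ref{subcyc}. Your parenthetical appeal to ``a single class up to the monodromy action'' is not a substitute: it concerns the points of $W_d^r(C)$ rather than the classes of the components of $C_d^r$ in $N_{r}(C_d)$, it is specific to characteristic zero while the paper works over an arbitrary algebraically closed field, and in any case numerical equivalence of the fibers follows directly from Fact \ref{subcyc} without any monodromy. Once this is repaired (and the $\rho>0$ and $\rho=0$ cases are treated separately as in the paper), the rest of your outline for part (3) goes through.
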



Note that, for a Brill-Noether general curve $C$, if $r=1$ and $\frac{g+2}{2}\leq d \leq g$ then $[C_d^1]$ generates an extremal ray of $\Psefft_{2d-g-1}(C_d)$, and this achieves the lower bound on $d$ in Theorem \ref{T:A}\eqref{T:A1}. On the other hand, we expect that  the lower bound $d\geq \frac{n+g+1}{2}$ is sharp for Brill-Noether general curves (see the discussion after Corollary \ref{C:nontriv}), while for special curves the lower bound is far from being sharp 
 (see Theorem \ref{T:C} for  hyperelliptic curves).

The tautological Abel-Jacobi faces are related to an exhaustive  decreasing multiplicative filtration of the tautological ring $R^*(C_d)$, which we call 
the \emph{$\theta$-filtration} of $R^*(C_d)$ (see \S \ref{SS:theta}) and which is defined by setting $\theta^{\geq i, m}$  to be the smallest linear subspace of $R^m(C_d)=R_{d-m}(C_d)$ containing the monomials $\{\theta^i x^{m-i}, \theta^{i+1}x^{m-i-1}, \ldots, \theta^{m} \}$, for any  $0\leq m \leq d$ and any $0\leq i \leq g+1$ (with the obvious convention that $\theta^{\geq i, m}=\{0\}$ if $i>m$). In Proposition \ref{P:theta}, we compute the dimension of  $\theta^{\geq i, m}$ and we investigate its orthogonal subspace $(\theta^{\geq i, m})^\perp:=\{\alpha\in R_m(C_d): \alpha\cdot \beta=0 \: \text{ for any } \beta\in \theta^{\geq i,m}\}\subseteq R_m(C_d)$. 
The link between tautological Abel-Jacobi faces and the $\theta$-filtration is explained in Proposition \ref{P:AJtheta}, where we prove that   
$$\AJt_n^r(C_d)= (\theta^{\geq n+1-r,n})^\perp \cap \Psefft_n(C_d)\subset (\theta^{\geq n+1-r,n})^\perp,$$
and that if  $\AJt_n^r(C_d)$ is a full-dimensional cone in the linear subspace $(\theta^{\geq n+1-r,n})^\perp$ then  $\AJt_n^r(C_d)$ is a perfect face of $\Psefft_n(C_d)$ whose (perfect) dual face is  $\theta^{\geq n+1-r,n} \cap \Neft^n(C_d)$ (faces of this kind are called \emph{nef $\theta$-faces}). 



Using the relation  with the $\theta$-filtration, we are able to show that in many ranges of $d$ and $n$ the non-trivial tautological Abel-Jacobi faces of $\Psefft_n(C_d)$ form  a \emph{maximal chain of perfect non-trivial faces}, i.e. a chain of perfect non-trivial faces of $\Psefft_n(C_d)$ whose dimensions start from one and increase by one at each step until getting to the dimension of $\Psefft_n(C_d)$ minus one. In the following theorem (which combines Theorems \ref{thetaPseff}, \ref{thetaNef}, \ref{T:BNfaces}), we summarize the cases where this happens for an arbitrary curve. 


\begin{theoremalpha}\label{T:B}
Let $n, d$ be integers such that $0 \le n \le d$.
\begin{enumerate}
\item \label{T:B1} Assume that $g \le \max\{n,d-n\}$ (which is always satisfied if $d>2g-2$).
Then the Abel-Jacobi face $\AJt_n^r(C_d)$ is equal to $\theta^{\geq g-n+r, d-n} \cap \Psefft_n(C_d)$ (and we call it \emph{pseff $\theta$-face}) and it is non-trivial  if and only if $1 + \max\{0,n-g\}\leq r\leq \min\{n,d-g\}$, in which case it is a perfect face of dimension $\min\{n,d-g\}-r+1$.

Hence, we get the following dual maximal chains of perfect non-trivial faces of $\Psefft_n(C_d)$ and of $\Neft^n(C_d)$:

$$
\begin{sis}
& \theta^{\geq {\min\{g,d-n\},d-n}}\cap \Psefft_n(C_d)=\cone(\theta^{\min\{g,d-n\}}x^{\max\{d-n-g,0\}})\subset  \dots \subset\\
& \qquad\qquad\qquad\qquad\qquad\qquad\qquad\quad\subset\dots\subset \theta^{\geq g+1-\min\{g,n\},d-n}\cap \Psefft_n(C_d)\subset \Psefft_n(C_d),\\
&\theta^{\geq {\min\{g,n\},n}}\cap \Neft^n(C_d)=\cone(\theta^{\min\{g,n\}}x^{\max\{n-g,0\}})\subset  \ldots \subset \\
& \qquad\qquad\qquad\qquad\qquad\qquad\qquad\qquad\subset\dots\subset \theta^{\geq g+1-\min\{g,d-n\},n}\cap \Neft^n(C_d)\subset \Neft^n(C_d).
\end{sis}
$$

\item \label{T:B2} Assume that   $C_d^n\neq \emptyset$ (which is always satisfied if $\displaystyle d\geq \frac{ng}{n+1}+n$).
Then  $\AJt_n^r(C_d)$ is a non-trivial face if and only if $1+\max\{0,n-g\}\leq r \leq n$, in which case $\AJt_n^r(C_d)$ is a perfect face of dimension $n+1-r$ (which we call  \emph{subordinate face}).

Hence, we get the following dual maximal chains of perfect non-trivial faces of $\Psefft_n(C_d)$ and of $\Neft^n(C_d)$:
$$
\begin{sis}
& \AJt_n^n(C_d)=\cone([\Gamma_d(\l)])\subset  \ldots \subset \AJt_n^{n+1-\min\{n,g\}}(C_d) \subset \Psefft_n(C_d),\\
& \theta^{\ge \min\{n,g\},n}\cap \Neft^n(C_d)=\cone(\theta^{\min\{g,n\}}x^{\max\{n-g,0\}})  \subset \ldots \subset \theta^{\ge 1,n} \cap \Neft^n(C_d) \subset \Neft^n(C_d),
\end{sis}
$$
where $\l$ is any $\g_d^n$ on $C$ and $\Gamma_d(\l)$ is the subordinate variety
$$\Gamma_d(\l):=\{D\in C_d: \:  D\leq E \text{ for some }E\in \l\}\subset C_d.$$

\item \label{T:B3} Assume that $g\leq d\leq 2g-2$.
Then  $\AJt_{g-1}^r(C_d)$ is a non-trivial face if and only if $1\leq r \leq d-g+1$, in which case $\AJt_{g-1}^r(C_d)$ is a perfect face of dimension $d-g+2-r$ (which we call \emph{BN(=Brill-Noether) face in dimension $g-1$}).

Hence, we get the following dual maximal chains of perfect non-trivial faces of $\Psefft_{g-1}(C_d)$ and of $\Neft^{g-1}(C_d)$:
$$
\begin{sis}
& \AJt_{g-1}^{d-g+1}(C_d)=\cone([C_d^{d-g+1}])\subset  \ldots \subset \AJ_{g-1}^{1}(C_d) \subset \Psefft_{g-1}(C_d),\\
& \theta^{\ge g-1,g-1}\cap \Neft^{g-1}(C_d)=\cone(\theta^{g-1}) \subset \ldots \subset \theta^{\ge 2g-1-d,g-1} \cap \Neft^{g-1}(C_d) \subset \Neft^{g-1}(C_d).
\end{sis}
$$
\end{enumerate}
\end{theoremalpha}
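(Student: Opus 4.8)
The plan is to prove all three parts by one mechanism, reducing each to a dimension statement fed into Propositions \ref{P:AJtheta}, \ref{P:theta} and \ref{P:nonzero} (so that the present statement is assembled from Theorems \ref{thetaPseff}, \ref{thetaNef} and \ref{T:BNfaces}). By Proposition \ref{P:AJtheta}, for every $r$ one has $\AJt_n^r(C_d)=(\theta^{\geq n+1-r,n})^\perp\cap\Psefft_n(C_d)$ inside $(\theta^{\geq n+1-r,n})^\perp$, and once this cone is full-dimensional in that subspace it is automatically a perfect face of $\Psefft_n(C_d)$ with dual perfect face $\theta^{\geq n+1-r,n}\cap\Neft^n(C_d)$, which is then full-dimensional in $\theta^{\geq n+1-r,n}$. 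So the whole theorem reduces to showing, for $r$ in the asserted window, that $\AJt_n^r(C_d)$ spans $(\theta^{\geq n+1-r,n})^\perp$; the dimensions are then computed by Proposition \ref{P:theta}, and since $i\mapsto\dim\theta^{\geq i,n}$ drops by exactly one each time $i$ grows by one throughout the relevant range, the faces $\AJt_n^r(C_d)$ gain one dimension as $r$ decreases by one — this is the maximal-chain property — while dually the faces $\theta^{\geq n+1-r,n}\cap\Neft^n(C_d)$ gain one dimension as $r$ grows. The two ends of the window are likewise governed by Proposition \ref{P:theta}: $\theta^{\geq n+1-r,n}$ fills $R^n(C_d)$ — so the face is all of $\Psefft_n(C_d)$, hence trivial — just below $r=1+\max\{0,n-g\}$, and $(\theta^{\geq n+1-r,n})^\perp$ becomes $\{0\}$ just above the upper bound of each case; this pins down the non-triviality window and, at the bottom value of $r$, the facet property. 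Finally $\Psefft_n(C_d)$ is pointed — the nef monomials $x^a\theta^b$ span $R^n(C_d)$, so $\Psefft_n(C_d)\subseteq(\Neft^n(C_d))^\vee$ is salient — which turns the one-dimensional faces into genuine rays, and similarly on the nef side.

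The explicit endpoints and the ``easy'' ranges come from concrete geometry. Non-triviality of $\AJt_n^r(C_d)$ throughout each window follows from Proposition \ref{P:nonzero} together with $n\leq\dim C_d^r$: in part $(1)$ from the Brill--Noether existence theory for $W^r_d$; in part $(2)$ because $C_d^n\neq\emptyset$ yields a $\g^n_d$ $\l$, hence the subordinate variety $\Gamma_d(\l)=\l\cong\PP^n\subset C_d$, so $\dim C_d^n\geq n$ and thus $\dim C_d^r\geq n$ for $r\leq n$; in part $(3)$ because $C_d^{d-g+1}=\{D:D\leq E\text{ for some }E\in|K_C|\}$ is non-empty (as $d\leq 2g-2$) of dimension exactly $g-1$, hence $\dim C_d^r\geq g-1=n$ for $r\leq d-g+1$. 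The smallest pseff faces are the effective rays $\cone([\Gamma_d(\l)])$ (in $(2)$) and $\cone([C_d^{d-g+1}])$ (in $(3)$): their generators meet $x^n$ positively and are contracted by $\alpha_d$, hence lie in $(\theta^{\geq 1,n})^\perp$, resp. $(\theta^{\geq 2g-1-d,\,g-1})^\perp$, which Proposition \ref{P:theta} shows are lines. In part $(1)$, the hypothesis $g\leq\max\{n,d-n\}$ makes $R^*(C_d)$ thin enough that Proposition \ref{P:theta} gives $(\theta^{\geq n+1-r,n})^\perp=\theta^{\geq g-n+r,d-n}$, which is spanned by the effective monomials $\theta^jx^{d-n-j}$ with $g-n+r\leq j\leq\min\{d-n,g\}$ (here $\theta^j/j!$ is the class of the Abel--Jacobi preimage of a translate of $W_{g-j}(C)$, and $x$ is effective and ample); so $\AJt_n^r(C_d)=\theta^{\geq g-n+r,d-n}\cap\Psefft_n(C_d)$ is full-dimensional with no further argument, and the displayed bottom rays are the unique surviving monomials $\theta^{\min\{g,d-n\}}x^{\max\{d-n-g,0\}}$ and (dually) $\theta^{\min\{g,n\}}x^{\max\{n-g,0\}}$, the latter nef because $\theta$ is nef and $x$ ample.

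The main obstacle is full-dimensionality of the intermediate faces $\AJt_n^r(C_d)$ in parts $(2)$ and $(3)$, for $r$ strictly between the bottom value and $r_{\max}$. There $(\theta^{\geq n+1-r,n})^\perp$ is strictly larger than the span of the effective monomials, and — crucially — $\AJt_n^r(C_d)$ contains no non-zero effective cycle class: any irreducible effective $n$-cycle has contractibility index $0$ if $\alpha_d$ does not drop its dimension and $n$ if $\alpha_d$ contracts each of its components, and a positive combination of effective cycles with contractibility index $\geq 1$ must in fact have all of them of the latter type, so index $n$ again. Hence the extra dimensions of $\AJt_n^r(C_d)$ have to be realized by genuinely non-effective pseudoeffective classes. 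I would get around this by duality: prove instead that $\theta^{\geq n+1-r,n}\cap\Neft^n(C_d)$ — which is manifestly full-dimensional in $\theta^{\geq n+1-r,n}$, being the cone on the nef monomials $\theta^ix^{n-i}$ with $i\geq n+1-r$ — is a genuine \emph{perfect} face of $\Neft^n(C_d)$, and here the Abel--Jacobi contractions of Brill--Noether loci do the work: the fibrations $C_d^{r'}\to W^{r'}_d$ with generic fibre $\PP^{r'}$, for the values of $r'$ dictated by $r$, should yield — via the contractibility-face formalism of \cite{fl2} and a careful analysis of the multiplicative structure of $R^*(C_d)$ — the supporting conditions that pin $\dim\AJt_n^r(C_d)$ to $n+1-r$. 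Carrying this out case by case is the technical core of Theorems \ref{thetaPseff}, \ref{thetaNef} and \ref{T:BNfaces}, and I expect it, rather than the reduction of the first paragraph or the extremal faces of the second, to be where the real work lies.
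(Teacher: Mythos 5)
Your first two paragraphs track the paper's actual mechanism (Proposition \ref{P:AJtheta} plus the dimension counts of Proposition \ref{P:theta}, and Lemma \ref{L:lowbound}-type effectivity of the monomials for part (1)), but the third paragraph contains a genuine gap built on a false claim. You assert that an irreducible effective $n$-cycle can only have contractibility index $0$ or $n$, so that the intermediate faces $\AJt_n^r(C_d)$ in parts (2) and (3) contain no non-zero effective classes and their extra dimensions must come from non-effective pseudoeffective classes. This is wrong: for an irreducible $Z$ of dimension $n$ one has $\contr_{\alpha_d}(Z)=n-\dim\alpha_d(Z)$, which can take any intermediate value, and the paper's proof rests precisely on exhibiting such cycles. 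Namely, for a $\g^n_d$ $\l$ and a base point $p_0$, the translated subordinate varieties $\Gamma_i=\psi_i(\Gamma_{d-i}(\l))$ of Proposition \ref{cyclecontr} are effective, tautological, pure of dimension $n$, with $\dim\alpha_d(\Gamma_i)=i$ (hence contractibility index $n-i$); in part (3) the same role is played by the translated Brill--Noether varieties $\Upsilon_i=\psi_i(C_{d-i}^{d-g+1-i})$ of Proposition \ref{P:varU}. Lemma \ref{intSigma} (projection formula) shows $[\Gamma_0],\dots,[\Gamma_{n-r}]$ (resp.\ $[\Upsilon_0],\dots,[\Upsilon_{d-g+1-r}]$) are linearly independent and span exactly $(\theta^{\geq n+1-r,n})^\perp$ (Corollary \ref{C:Sigmaper}), so $\AJt_n^r(C_d)$ is full-dimensional in that subspace by pure effectivity, and Proposition \ref{P:AJtheta}\eqref{P:AJtheta2} finishes. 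Without these cycles your proof of the intermediate cases does not exist.

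Your proposed escape route via duality also cannot work as stated. The fullness of $\theta^{\geq n+1-r,n}\cap\Neft^n(C_d)$ inside $\theta^{\geq n+1-r,n}$ is automatic (the spanning monomials $\theta^i x^{n-i}$ are nef, since $\theta$ is nef and $x$ ample) and is already used in the paper; by Remark \ref{perf} and Proposition \ref{P:AJtheta}\eqref{P:AJtheta2} the perfect-duality conclusion needs, in addition, fullness of $\AJt_n^r(C_d)$ in $(\theta^{\geq n+1-r,n})^\perp$ on the pseudoeffective side --- exactly the step you defer to ``a careful analysis of the multiplicative structure of $R^*(C_d)$'' that is never carried out. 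Knowing the nef side is full-dimensional does not force the pseff-side face to span the orthogonal subspace, so the ``technical core'' you point to is genuinely missing, and the missing idea is the explicit construction of the cycles $\Gamma_i$ and $\Upsilon_i$ with prescribed intermediate Abel--Jacobi image dimension.
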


There are some overlaps between the different cases of the above Theorem \ref{T:B}, see Remarks \ref{sharpNef} and \ref{R:compaAJ}. 
In Figure \ref{uno} we present the existence range of the various 
tautological Abel-Jacobi faces in Theorems \ref{T:A} and \ref{T:B} 
for Brill-Noether general curves\color{black}.



Note that we recover from Theorem \ref{T:B} the previously mentioned results of the extremality of $\theta$ in $\Psefft^1(C_d)$ and $\Neft^1(C_d)$: part \eqref{T:B1} gives that $\theta$ is extremal in $\Psefft^1(C_d)$ if $d\geq g+1$ and part \eqref{T:B2} gives that $\theta$ is extremal in $\Neft^1(C_d)$ if $C_d^1\neq \emptyset$, i.e. if $d\geq \gon(C)$. Moreover, both  results are sharp. Furthermore, part \eqref{T:B3} of Theorem \ref{T:B} gives that $[C_{g}^1]=\theta-x\in N^1(C_g)$ generates an extremal ray of $\Pseff^1(C_{g})$.  This extends the result of the second author (see \cite[Rmk 1 after Thm.\ 5]{Kou}) from very general complex curves to arbitrary curves 
over an algebraically closed field. 

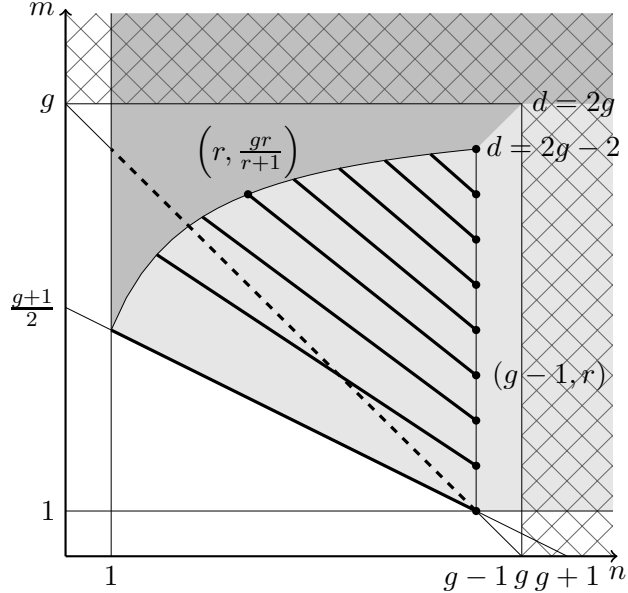
\begin{figure}[H]
\begin{tikzpicture}[scale=.6]                                    

\begin{scope}
\clip (1,5) -- (9,1) -- (12,1) -- (12,12) -- (1,12) -- cycle;
\fill[black!10!white] (0,0) rectangle (12,12);
\end{scope}

\begin{scope}
\clip (1,5) -- plot[domain=1:9] (\x,{(10*(\x))/((\x)+1)}) -- (10,10) -- (12,10) -- (12,12) -- (1,12) -- cycle;
\fill[black!25!white] (0,0) rectangle (12,12);
\end{scope}

\begin{scope}
\clip (0,10) -- (10,10) -- (10,0) -- (12,0) -- (12,12) -- (0,12) -- cycle;
\draw[very thin, black!60!white, step=.471,rotate=45] (0,-12) grid (20,12);
\end{scope}

\draw[color=black, thick, ->] (0,0) -- (12.1,0) node[below] {$n$};            
\draw[color=black, thick, ->] (0,0) -- (0,12.1) node[left] {$m$};                
\draw[very thin, domain=1:9] plot (\x,{(10*(\x))/((\x)+1)});       
\draw[very thin] (0,11/2) node[left] {$\frac{g+1}{2}$} -- (11,0) node[below] {$g+1$};
\draw[very thin] (0,1) node[left] {1} -- (12,1);     
\draw[very thin] (1,0) node[below] {1} -- (1,12);     
\draw[very thin] (9,1) -- (9,9);                                                  
\draw[very thin] (10,0) -- (9,1);   
\draw[very thin] (0,10) -- (1,9);
\draw[very thick, dashed] (9,1) -- (1,9);   
\draw[very thin] (10,0) -- (10,10);   
\draw[very thin] (0,10) -- (10,10);   
\draw[very thick] (1,10/2) -- (9,1) node [fill, color=black, shape=circle, scale=0.3]{} ;
\draw[very thick] (2,20/3) -- (9,2) node [fill, color=black, shape=circle, scale=0.3]{} ;
\draw[very thick] (3,30/4) -- (9,3) node [fill, color=black, shape=circle, scale=0.3]{} ;
\draw[very thick] (4,40/5) node [fill, color=black, shape=circle, scale=0.3, label=above:${\left(r,\frac{gr}{r+1}\right)}$]{}  -- (9,4) node [fill, color=black, shape=circle, scale=0.3, label=right:${\left(g-1,r\right)}$]{} ;
\draw[very thick] (5,50/6) -- (9,5) node [fill, color=black, shape=circle, scale=0.3]{} ;
\draw[very thick] (6,60/7) -- (9,6) node [fill, color=black, shape=circle, scale=0.3]{} ;
\draw[very thick] (7,70/8) -- (9,7) node [fill, color=black, shape=circle, scale=0.3]{} ;
\draw[very thick] (8,80/9) -- (9,8) node [fill, color=black, shape=circle, scale=0.3]{} ;
\draw (9,9) node [fill, color=black, shape=circle, scale=0.3]{} ;

\draw (9,0) node[below] {$g-1$};
\draw (10,-0.08) node[below] {$g$};
\draw (0,10) node[left] {$g$};
\draw (10,10) node[right] {$d=2g$};
\draw (9,9) node[right] {$d=2g-2$};
\end{tikzpicture}
\caption{The picture describes the existence of tautological Abel-Jacobi faces when $C$ is a Brill-Noether general curve.
We set $m=d-n$. 
The colored area is defined by the inequality $d\geq \frac{n+g+1}{2}$ in Theorem \ref{T:A}\eqref{T:A1}, so it describes the locus where we can assure the existence of non-trivial Abel-Jacobi faces.
In particular, the dark gray area is given by Theorem \ref{T:B}\eqref{T:B2} and represents the locus where subordinate faces do exist.
On the other hand, BN rays exist on the integral points of the thick lines---each having equation $(r+1)m+rn=r^2+rg$ for some $1\leq r\leq g$---by Theorem \ref{T:A}\eqref{T:A2}, and the dots on the line $n=g-1$ indicate where BN faces in dimension $g-1$ exist, according to Theorem  \ref{T:B}\eqref{T:B3}.
The area covered by the grid is described by the condition $g\leq \max\{n,m\}$ in Theorem  \ref{T:B}\eqref{T:B1} which governs the existence of $\theta$-faces. 
Finally,  Theorem \ref{T:A}\eqref{T:A3} guarantees the existence of Abel-Jacobi facets in the union of the area covered by the grid with the dark gray area and the line $n=g-1$.
If in addiction $C$ is assumed to be very general, then Abel-Jacobi facets do exist also in the area on the right and above the dashed line, which is the locus satisfying the condition $n+m\geq g$.}
\label{uno}
\end{figure}

According to the discussion at the end of \S\ref{SS:theta}, there is one more case (apart from the three cases of Theorem \ref{T:B}) where the non-trivial tautological Abel-Jacobi faces form a maximal chain of non-trivial perfect faces, namely the case of an hyperelliptic curve where we found such maximal chains in every pseudoeffective cone! This is made precise in the following theorem (which summarizes the more precise Theorem \ref{T:hyper}) where we restrict to the case $n, d-n<g$ since in the remaining case $g\leq \max\{n,d-n\}$ everything follows from part \eqref{T:B1} of Theorem \ref{T:B}.  

\begin{theoremalpha}\label{T:C}
Let $C$ be an hyperelliptic curve of genus $g\geq 2$ and fix integers $n,d$ such that $0 \le n, d-n <g$ (which implies that $0\leq d \leq 2g-2$). 
\begin{enumerate}[(i)]
\item \label{T:C1} Assume that $d\geq 2n$. 

Then, for any $1\leq r\leq n$,  $\AJt_n^r(C_d)$ is a non-trivial face, in which case $\AJt_n^r(C_d)$  is a perfect face of dimension $n+1-r$.
Hence, we get the following dual maximal chains of perfect non-trivial faces of $\Psefft_n(C_d)$ and of $\Neft^n(C_d)$:
$$
\begin{sis}
& \AJt_n^n(C_d)=\cone([\Gamma_d(\l)])\subset  \ldots \subset \AJt_n^{1}(C_d) \subset \Psefft_n(C_d),\\
& \theta^{\ge n,n}\cap \Neft^n(C_d)=\cone(\theta^{n})  \subset \ldots \subset \theta^{\ge 1,n} \cap \Neft^n(C_d) \subset \Neft^n(C_d),
\end{sis}
$$
where $\Gamma_d(\l)$ is the subordinate variety with respect to any linear system $\l$ of degree $d$ and dimension $n$. 

\item \label{T:C2} Assume that $d\leq 2n$.

Then $\AJt_{n}^r(C_d)$ is a non-trivial face if and only if $1\leq r \leq d-n$, in which case $\AJt_{n}^r(C_d)$ is a perfect face of dimension $d-n+1-r$ (which we call \emph{hyperelliptic BN(=Brill-Noether) 
face\color{black}}).
Hence, we get the following dual maximal chains of perfect non-trivial faces of $\Psefft_n(C_d)$ and of $\Neft^n(C_d)$:
$$
\begin{sis}
& \AJt_n^{d-n}(C_d)=\cone([C_d^{d-n}])\subset  \ldots \subset \AJt_n^{1}(C_d) \subset \Psefft_n(C_d),\\
& \theta^{\ge n,n}\cap \Neft^n(C_d)=\cone(\theta^{n})  \subset \ldots \subset \theta^{\ge 2n-d+1,n} \cap \Neft^n(C_d) \subset \Neft^n(C_d).
\end{sis}
$$
\end{enumerate}
\end{theoremalpha}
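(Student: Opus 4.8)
The plan is to reduce everything to the description of tautological Abel--Jacobi faces through the $\theta$-filtration. By Proposition \ref{P:AJtheta} we have $\AJt_n^r(C_d)=(\theta^{\geq n+1-r,n})^\perp\cap\Psefft_n(C_d)$, sitting inside the linear subspace $(\theta^{\geq n+1-r,n})^\perp$ of $R_n(C_d)=R^{d-n}(C_d)$; and as soon as this cone is full-dimensional in that subspace it is automatically a perfect face whose perfect dual face is the nef $\theta$-face $\theta^{\geq n+1-r,n}\cap\Neft^n(C_d)$. Thus the whole statement --- including both dual chains and all perfectness and dimension assertions --- follows once we establish: $(a)$ a formula for $\dim(\theta^{\geq n+1-r,n})^\perp$ (provided by Proposition \ref{P:theta}), and $(b)$ that, for $r$ in the stated range, $\AJt_n^r(C_d)$ contains enough effective tautological $n$-cycles to span $(\theta^{\geq n+1-r,n})^\perp$. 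Throughout we write $A$ for the $\g^1_2$ on $C$, and we freely use the classical hyperelliptic Brill--Noether facts recalled in \S\ref{SS:BNvar}: for $0\leq d\leq 2g-2$ one has $C_d^s\neq\emptyset\iff d\geq 2s$ (Clifford's theorem and its equality case), and $W^s_d=sA+W_{d-2s}$, so $W^s_d=\Pic^d$ exactly when $d\geq g+s$ while otherwise $W^s_d$ is irreducible of dimension $d-2s$ and $C_d^s=\alpha_d^{-1}(W^s_d)$ is irreducible of dimension $d-s$.

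For part \eqref{T:C1}, the hypothesis $d\geq 2n$ gives $C_d^n\neq\emptyset$ and also produces the $\g_d^n$ $\l=|nA+(d-2n)p_0|$ with $\dim\Gamma_d(\l)=n$. Since $n<g$, Theorem \ref{T:B}\eqref{T:B2} applies verbatim and yields exactly the claimed chains, with $\AJt_n^n(C_d)=\cone([\Gamma_d(\l)])$ at the bottom. (When $d=2n$ this overlaps part \eqref{T:C2}; there $d-n=n$, $\Gamma_{2n}(\l)=|nA|=C_{2n}^n$, and both statements agree.)

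For part \eqref{T:C2} we argue directly, since $C_d^n=\emptyset$ once $d<2n$. Under the hypotheses one checks $d\geq 2(d-n)$ and $2n-d<g$, so $C_d^{d-n}$ is irreducible of dimension $n$; by Proposition \ref{P:nonzero} this identifies the bottom ray $\AJt_n^{d-n}(C_d)=\AJ_n^{d-n}(C_d)=\cone([C_d^{d-n}])$. For the remaining faces we exhibit explicit effective tautological classes of codimension $d-n$: for $\max\{r,\,d-g+1\}\leq s\leq d-n$ set $\beta_s:=\theta^{\,d-s-n}\cdot[C_d^s]$, and (only when $d>g$ and $r\leq d-g$) for $r\leq s\leq d-g$ set $\beta_s:=\theta^{\,g-n+s}\,x^{\,d-g-s}$; these choices partition $\{r,\dots,d-n\}$. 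Each $\beta_s$ is effective, tautological, of codimension $d-n$, and has contractibility index $\geq s$: in the first case $\dim\alpha_d(C_d^s)=d-2s$ forces $[C_d^s]\cdot\theta^{\,d-2s+1}=0$ by the elementary identity $[V]\cdot\pi^*(h^{\dim\pi(V)+1})=0$, so $\contr_{\alpha_d}([C_d^s])\geq s$, and multiplication by $\theta$ does not change the contractibility index; in the second case one computes directly $\contr_{\alpha_d}(\theta^a x^b)=\max\{0,a+n-g\}$ when $a+b=d-n$, which gives $s$. Hence $\beta_s\in\AJt_n^s(C_d)\subseteq\AJt_n^r(C_d)$ for all $r\leq s\leq d-n$. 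Writing each $\beta_s$ in the monomial basis $\{\theta^i x^{d-n-i}\}_{0\leq i\leq d-n}$ of $R^{d-n}(C_d)$ (there are no relations in this degree, since $d-n\leq\min\{n,g\}$), the $d-n+1-r$ classes $\beta_s$ are in triangular position --- the lowest $\theta$-power occurring in $\beta_s$ is $d-s-n$ in the first case and $g-n+s$ in the second, and these are pairwise distinct as $s$ runs over the index set --- hence linearly independent. Combined with $(a)$, which by Proposition \ref{P:theta} gives $\dim(\theta^{\geq n+1-r,n})^\perp=d-n+1-r$ for $1\leq r\leq d-n$ and $\dim(\theta^{\geq n+1-r,n})^\perp=0$ for $r>d-n$ (when $\theta^{\geq n+1-r,n}=R^n(C_d)$), this shows $\AJt_n^r(C_d)$ is full-dimensional in $(\theta^{\geq n+1-r,n})^\perp$ precisely when $1\leq r\leq d-n$ and trivial otherwise; Proposition \ref{P:AJtheta} then delivers the perfect faces of the stated dimensions, and both dual chains.

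The main obstacle is the last linear-algebra step in part \eqref{T:C2}: one must know that the coefficient of $x^s$ in $[C_d^s]\in R^s(C_d)$ is non-zero --- so that the lowest $\theta$-power in $\beta_s$ is really $d-s-n$ --- and that the two regimes dovetail, so that the resulting faces are genuinely distinct. This is where one uses the precise hyperelliptic formula for $[C_d^s]$ (equivalently the Kempf--Kleiman--Laksov expression recalled in \S\ref{SS:BNvar}), carefully separating the genuine Brill--Noether range $d-2s<g$ from the degenerate range $d-2s\geq g$, in which $C_d^s=C_d$ and the $\beta_s$ must be replaced by the $\theta$-power classes above. By contrast, the two values of $\dim(\theta^{\geq i,n})^\perp$ needed in $(a)$ are a routine consequence of Proposition \ref{P:theta}, and the passage from the $\Psefft_n(C_d)$-chain to the $\Neft^n(C_d)$-chain is purely formal via the duality in Proposition \ref{P:AJtheta}.
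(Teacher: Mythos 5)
Your proof is correct, and while part \eqref{T:C1} coincides with the paper's argument (both reduce to Theorem \ref{thetaNef} via $\gon_n(C)=2n$, Lemma \ref{L:gonindex}), your treatment of part \eqref{T:C2} takes a genuinely different route. The paper (Theorem \ref{T:hyper}) constructs the irreducible $n$-dimensional subvarieties $\Upsilon_i^H=\psi_i(C_{d-i}^{d-n-i})\subset C_d$ by adding base points (Corollary \ref{C:varhyp}), computes $\dim\alpha_d(\Upsilon_i^H)=2n-d+i$, and uses Lemma \ref{intSigma} to get $(\Omega_i^H)^\perp=\theta^{\geq 2n-d+i,n}$, which identifies $\AJt_n^r(C_d)=\Omega^H_{d-n+1-r}\cap\Psefft_n(C_d)$ before invoking Proposition \ref{P:AJtheta}\eqref{P:AJtheta2}. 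You instead span $(\theta^{\geq n+1-r,n})^\perp$ by the classes $\theta^{d-n-s}\cdot[C_d^s]$ (Brill--Noether loci of $C_d$ itself cut by $\theta$) together with the monomials $\theta^{g-n+s}x^{d-g-s}$ in the Riemann--Roch-degenerate range $s\leq d-g$ where $C_d^s=C_d$; membership in $\AJt_n^s(C_d)$ is immediate from $\beta_s\cdot\theta^{n+1-s}=[C_d^s]\cdot\theta^{d-2s+1}=0$ (resp.\ from $\theta^{g+1}=0$), and linear independence follows from your triangularity argument, whose key input---the nonvanishing of the coefficient $\binom{d-s-g}{s}$ of $x^s$ in formula \eqref{Cdr-hyper}, valid since $d-s-g<0$---is exactly where Proposition \ref{P:BNhyper} enters. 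Both routes thus rest on Propositions \ref{P:BNhyper}, \ref{P:theta} and \ref{P:AJtheta}; yours avoids constructing new subvarieties and the base-point trick, at the (mild) cost of needing that $\theta$-products of effective tautological classes lie in $\Psefft_n(C_d)$ (fine, since $\theta$ is semiample, by the same approximation used in Lemma \ref{L:lowbound}) and at the cost of yielding slightly less geometric information: the paper's spanning classes are honestly effective irreducible cycles, giving the explicit description of each face, which your argument does not reproduce (though the theorem as stated does not require it). Two small inaccuracies worth fixing but not load-bearing: the hyperelliptic Brill--Noether facts you invoke are established in Proposition \ref{P:BNhyper} (Section 7), not in \S\ref{SS:BNvar}; and the Kempf--Kleiman--Laksov formula of Fact \ref{BNclass}\eqref{BNclass2} is not applicable here since the hyperelliptic $C_d^s$ has dimension $d-s$, larger than the expected one---what you actually use, correctly, is the hyperelliptic formula \eqref{Cdr-hyper}.
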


Note that the tautological Abel-Jacobi faces of part \eqref{T:C1} are exactly the subordinate faces of part \eqref{T:B2} of Theorem \ref{T:B}, using that for an hyperelliptic curve $C$ we have that $C_d^n\neq \emptyset $  precisely when $d\geq 2n$. 

From Theorem \ref{T:C} we recover the previously known results for $\Psefft^1(C_d)$ and $\Neft^1(C_d)$ for $C$ hyperelliptic: part 
\eqref{T:C1} gives that $\theta$ is extremal in $\Neft^1(C_d)$ for any $d\geq 2=\gon(C)$, part \eqref{T:C2} gives that $C_d^{d-1}$, whose class is a positive multiple of $\theta-(d-g+1)x$ by Proposition \ref{P:BNhyper}, is extremal in $\Psefft^1(C_d)$, which was proved in  \cite[Prop. H]{Mus1}.


On the other hand we should point out that there are some explicit extremal rays of $\Pseff_1(C_{g/2})$ in \cite[Thm. 4.1]{Pac} and of 
$\Pseff^1(C_d), d = g-1, g-2$ in \cite[Thm. A(ii)]{Mus1} and in \cite[Thm. I]{Mus2} which are not Abel-Jacobi faces.

\vspace{0.1cm}

The present work leaves open some natural questions.

\begin{question*}
Assume that we are in one of the cases of Theorem \ref{T:B}.
\begin{enumerate}
\item What is the structure of the Abel-Jacobi faces of dimension greater than one? Are they rational polyhedral cones and, if yes, what are their extremal rays? 
\item Is $\Psefft_n(C_d)$ the smallest cone containing the diagonal cone and the Abel-Jacobi faces? 
\end{enumerate}
\end{question*}



\section{Preliminaries}

\subsection{Notations and conventions}

Throughout, we work over an algebraically closed field of arbitrary characteristic.



For any natural number $n\in \bbN$ and any real number $r\in \R$, we set 
$$\binom{r}{n}=
\begin{cases}
\frac{r(r-1)\ldots (r-n+1)}{n!} & \text{ if }n>0, \\
1 & \text{ if } n=0.
\end{cases}
$$

We recall from the Appendix of \cite{BKLV} a definition and a remark, that will be crucial in this paper.


\begin{defi}
Let $V$ be a finite dimensional real vector space. A (convex) \textbf{cone} $K$ inside $V$ is a non-empty subset $K$ of $V$ such that if $x,y\in K$  and $\alpha, \beta\in \R^{>0}$ then $\alpha x+\beta y\in K$. A face $F$ of $K$ is {\bf perfect} if either $F = K$ or it has codimension $c \ge 1$ in $V$ and there exist linear hyperplanes $H_i =\{l_i=0\}_{1\leq i \leq c}$, such that 
\begin{equation}\label{E:perf-face}
\begin{sis}
& K\subseteq H_i^+=\{l_i \geq 0\}\quad \text{for any }\: 1\leq i \leq c, \\
& \langle F \rangle=\cap_{i=1}^c H_i.
\end{sis}
\end{equation}
A cone $K\subset V$ is \textbf{salient} if it does not contain lines through the origin, and it is \textbf{full} if $\langle K\rangle=V$.\color{black}
\end{defi}

\begin{remark}\label{perf}
Let $K\subset V$ be a salient full closed cone 
and let $K^{\vee}:=\{l\in V^{\vee}|\,l(x)\geq 0\, \forall x\in K\}$ be its dual cone\color{black}. If $L\subseteq V$ is a subspace such that $K\cap L$ is a full cone in $L$ and 
$K^{\vee}\cap L^{\perp}$ is a full cone in $L^{\perp}$, then $F:=K\cap L$ is a perfect face of $K$ with dual face being perfect and equal to $K^{\vee}\cap L^{\perp}$.   
\end{remark}

\subsection{Symmetric product}

Let $C$ be a smooth projective irreducible curve of genus $g\geq 1$. For any integer $d\geq 1$, we denote by $C^d$ the 
$d$-fold ordinary \color{black} product of $C$ and by $C_d$ the 
$d$-fold \color{black} symmetric product of $C$. 

The symmetric product $C_d$ is related to the Jacobian of $C$ by the 
Abel-Jacobi morphism 
\begin{equation*}
\begin{aligned}
\alpha_d:C_d & \longrightarrow \Pic^d(C)\\
D& \mapsto \O_C(D).  
\end{aligned}
\end{equation*}
The fiber of $\alpha_d$ over $L\in \Pic^d(C)$ is the complete linear system $|L|$. 

Fixing a base point $p_0\in C$, there is an inclusion $i=i_{p_0}: C_{d-1} \hookrightarrow C_d$, obtained by sending $D$ into $D+p_0$. We will denote by $X_{p_0}$ the image of $i_{p_0}$. 
The inclusion $i_{p_0}$ is compatible with the Abel-Jacobi morphisms in the sense that  $\alpha_d\circ i_{p_0}= t_{p_0}\circ \alpha_{d-1}$, where $t_{p_0}:\Pic^{d-1}(C)\to \Pic^d(C)$ is the translation by $p_0$ which 
sends $L$ into $L(p_0)$.  

\subsection{Tautological ring}\label{SS:tautring}

For any $0 \le n, m \le d$, we will denote by $N_n(C_d)$ (resp. $N^m(C_d)$) the $\R$-vector space of $n$-dimensional (resp. $m$-codimensional) cycles on $C_d$ modulo numerical equivalence. The intersection product 
induces a perfect duality $N^m(C_d)\times N_{d-m}(C_d)\stackrel{\cdot}{\longrightarrow} \R$. The vector space $N^*(C_d)=\oplus_{m=0}^d N^m(C_d)$ is a graded $\R$-algebra with respect to the intersection product. 

The \emph{tautological ring} $R^*(C_d)$ is the graded $\R$-subalgebra of $N^*(C_d)$ generated by the codimension one classes $\theta=\alpha_d^*([\Theta])$ (where $\Theta$ is any theta divisor on $J(C)$) and $x=[X_{p_0}]$ 
for some (equivalently any) base point $p_0$. 
Observe that $\theta$ is a semiample class (because it is the pull-back of an ample line bundle via a regular morphism) and it is ample if and only if $\alpha_d$ is a finite morphism, 
that is \color{black}if and only if $d < \gon(C)$. On the other hand, since we can move the base point $p_0$ arbitrarily, the class $x$ is ample by the Nakai-Moisezhon criterion (see  \cite[Prop. VII.2.2]{GAC1}).

We recall from \cite{BKLV} the following properties of the tautological ring $R^*(C_d)$ and vector spaces $R^m(C_d)$.

\begin{prop}\label{basetaut}
\noindent 
\begin{enumerate}[(i)]
\item \label{basetaut1} We have that $\theta^{g+1}=0$ and, if $s \in \bbN: s \le d$, 
$$\theta^s \cdot x^{d-s}=s!\binom{g}{s}=
\begin{cases} \frac{g!}{(g-s)!} & \text{ if }0\leq s\le g,\cr
 \ \ \ 0 & \text{ if } s > g. \cr
\end{cases}
$$
\item \label{basetaut4} For any $0\leq m\leq d$, set $r(m):=\min\{m,d-m, g\}$. Then
$R^m(C_d)$ has dimension $r(m)+1$ and it is freely generated by any subset of $r(m)+1$ monomials belonging to $\{x^m,x^{m-1}\theta, \ldots, x^{m-\min\{m,g\}} \theta^{\min\{m,g\}}\}$.

In particular, the monomials $\{x^m,\ldots, x^{m-r(m)}\theta^{r(m)}\}$ form a basis of $R^m(C_d)$, which is called the \emph{standard basis}. 
\item \label{basetaut5} The intersection product $R^m(C_d)\times R^{d-m}(C_d)\to \R$ is non-degenerate. 
\end{enumerate} 
\end{prop}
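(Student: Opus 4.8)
The plan is to deduce everything from the intersection numbers of~\eqref{basetaut1}, which themselves rest on Poincar\'e's formula, reserving the classical presentation of $N^*(C_d)$ for a single non-formal point.

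For~\eqref{basetaut1}: first $\theta^{g+1}=\alpha_d^*([\Theta]^{g+1})=0$, since $[\Theta]^{g+1}$ lies in $H^{2g+2}(\Pic^d(C))=0$; in particular $\theta^s x^{d-s}=0=s!\binom gs$ for $s>g$. For $0\le s\le g$ I would induct on $d$. If $s<d$, the projection formula (taking $X_{p_0}$ as a representative of $x$) gives $\theta^s\cdot x^{d-s}=\deg_{C_{d-1}}(\theta^s\cdot x^{d-1-s})$ once we know that the restriction $i_{p_0}^*\colon R^*(C_d)\to R^*(C_{d-1})$ sends $x\mapsto x$ (the preimage of $X_q$, $q\ne p_0$, under $i_{p_0}$ is $X_q\subset C_{d-1}$) and $\theta\mapsto\theta$ (because $\alpha_d\circ i_{p_0}=t_{p_0}\circ\alpha_{d-1}$ and translations act as the identity on $N^1$ of an abelian variety). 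Iterating down to $d=s$, we are left with $\theta^s=\alpha_s^*([\Theta]^s)$ on $C_s$; since $\alpha_s$ is birational onto $W_s$ (a general degree-$s$ effective divisor is non-special for $s\le g$), this equals $[\Theta]^s\cdot[W_s]=\tfrac{1}{(g-s)!}[\Theta]^g=\tfrac{g!}{(g-s)!}$, using Poincar\'e's formula $[W_s]=[\Theta]^{g-s}/(g-s)!$ and $[\Theta]^g=g!\,[\mathrm{pt}]$.

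For~\eqref{basetaut4} and~\eqref{basetaut5}: from $\theta^{g+1}=0$ the space $R^m(C_d)$ is spanned by $\mathcal B_m:=\{x^{m-i}\theta^i:0\le i\le\min\{m,g\}\}$, so $\dim R^m(C_d)\le\min\{m,g\}+1$. Put $c_k:=k!\binom gk$ (thus $c_k=g!/(g-k)!$ for $k\le g$ and $c_k=0$ for $k>g$) and $r:=r(m)$. By~\eqref{basetaut1}, the Gram matrix of the intersection pairing $R^m(C_d)\times R^{d-m}(C_d)\to\R$ on $\mathcal B_m$ and $\mathcal B_{d-m}$ is $(c_{i+j})$. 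The crucial observation I would prove is that every submatrix $(c_{i+j})_{i\in I,\ 0\le j\le r}$ with $I\subseteq\{0,\dots,\min\{m,g\}\}$ and $|I|=r+1$ is invertible: using $c_{i+j}=c_i\cdot(g-i)(g-i-1)\cdots(g-i-j+1)$ (valid for $0\le i\le g$, both sides vanishing when $i+j>g$) it factors as $\operatorname{diag}((c_i)_{i\in I})$ times a matrix whose $j$-th column is a degree-$j$ polynomial in the row index with leading coefficient $(-1)^j$, i.e.\ a generalized Vandermonde on the $r+1$ distinct nodes $I$, whose determinant is nonzero, while $\prod_{i\in I}c_i\ne0$ because $I\subseteq\{0,\dots,g\}$. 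Hence the pairing has rank $\ge r+1$, so $\dim R^m(C_d),\dim R^{d-m}(C_d)\ge r+1$. When $\min\{m,g\}=r$ (in particular if $m\le d-m$, or if $m,d-m\ge g$) this is already an equality; in the remaining range, $m>d-m$ and $d-m<g$, the opposite bound $\dim R^m(C_d)\le r+1$ records the presence of genuine relations among the monomials of $\mathcal B_m$, and for this I would invoke the classical description of the ring $N^*(C_d)$ (Macdonald; see also \cite{BKLV}). This is the step I expect to be the main obstacle---the only one that is not linear algebra on top of~\eqref{basetaut1}. Granting it, the pairing has rank $r+1=\dim R^m(C_d)=\dim R^{d-m}(C_d)$, which proves~\eqref{basetaut5}; and the invertibility of the submatrices above shows that any $r+1$ of the monomials listed in~\eqref{basetaut4} are linearly independent, hence (being $\dim R^m(C_d)$ in number) a basis, the standard basis $\{x^m,\dots,x^{m-r}\theta^r\}$ included.
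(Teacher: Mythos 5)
Your proposal is correct, but it takes a genuinely different route from the paper, whose ``proof'' of Proposition \ref{basetaut} consists entirely of the citation \cite[Lemma 2.2 and Prop. 2.4]{BKLV}. For part \eqref{basetaut1}, your argument ($i_{p_0}^*x=x$, $i_{p_0}^*\theta=\theta$, projection formula down to $C_s$, birationality of $\alpha_s$ onto $W_s$ for $s\le g$, then Poincar\'e's formula) is the standard computation and is sound; the only care needed is that the paper works over an arbitrary algebraically closed field, so one should invoke characteristic-free versions of the generic non-specialty of degree-$s$ divisors and of Poincar\'e's formula (both available). Your treatment of \eqref{basetaut4} and \eqref{basetaut5} is the genuinely different part: factoring each submatrix $(c_{i+j})_{i\in I,\,0\le j\le r}$ as $\operatorname{diag}(c_i)_{i\in I}$ times a generalized Vandermonde matrix built from the falling factorials $(g-i)(g-i-1)\cdots(g-i-j+1)$ is a clean way to obtain, in one stroke, the linear independence of \emph{any} $r(m)+1$ of the listed monomials and the non-degeneracy of the pairing, reducing everything to the intersection numbers of \eqref{basetaut1}. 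You also correctly identify the one step that cannot be formal: the upper bound $\dim R^m(C_d)\le d-m+1$ in the range $d-m<\min\{m,g\}$, since the Gram matrix only bounds dimensions from below, and the extra relations among the monomials really come from the structure of $N^*(C_d)$ (Macdonald's computation, \cite[VIII.5]{GAC1}, or the operator calculus of \cite{BKLV}). Since that structural input is exactly what the paper itself imports wholesale via its citation, delegating this single step is legitimate; what your approach buys is that everything else---the basis statement for arbitrary $(r(m)+1)$-element subsets and the non-degeneracy in \eqref{basetaut5}---becomes elementary linear algebra on top of \eqref{basetaut1}, rather than a black-box reference.
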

\begin{proof}
See 
\cite[Lemma 2.2 and Prop. 2.4]{BKLV}.
\end{proof}

\subsection{Cones of cycles}\label{cones}

Let us introduce the cones of cycles we will be working with. Inside the real vector space $N^m(C_d), 0 \le m \le d$, consider the (convex) cone $\Eff^m(C_d)$ generated by effective codimension $m$ cycles (called the cone of \emph{effective cycles}) and its closure $\Pseff^m(C_d)$ (called the cone of \emph{pseudoeffective cycles}). These cones are salient by \cite[Prop. 1.3]{BFJ}, \cite[Thm. 1.4(i)]{fl1}. The intersection $\Efft^m(C_d):=\Eff^m(C_d)\cap R^m(C_d)$ is called the \emph{tautological effective} cone and its closure  $\Psefft^m(C_d):=\ov{\Efft^m(C_d)}$ is called the \emph{tautological pseudoeffective} cone. Note that there is an inclusion $\Psefft^m(C_d)\subseteq \Pseff^m(C_d)\cap R^m(C_d)$, which a priori could be strict. 

The dual of $\Pseff^{d-m}(C_d)$ (respectively of $\Psefft^{d-m}(C_d)$) cone is  the \emph{nef} cone $\Nef^m(C_d)\subset N^m(C_d)$ (resp. the \emph{tautological nef} cone $\Neft^m(C_d)\subset R^m(C_d)$). Note that there is an inclusion $\Nef^m(C_d)\cap R^m(C_d)\subseteq \Neft^m(C_d)$,  which a priori could be strict. 

For $0 \le n \le d$ we set $\Eff_n(C_d):=\Eff^{d-n}(C_d)$ and similarly for the other cones. 

Note that, if $C$ is a very general curve, 
then $R^*(C_d)=N^*(C_d)$ for every $d\geq 1$ by 
\cite[Fact 2.6]{BKLV}, \cite[VIII.5]{GAC1}\color{black}, and hence $\Efft^m(C_d)=\Eff^m(C_d)$,  $\Psefft^m(C_d)= \Pseff^m(C_d)$ and $\Nef^m(C_d)=\Neft^m(C_d)$.

A  case where we know a complete description of the (tautological) effective, pseudoeffective and nef cone of cycles is the case of curves of genus one\footnote{Note that if $g=0$, i.e. $C=\PP^1$, then 
$C_d \cong \PP^d$ and all the cones in question become one-dimensional, hence trivial.}. 

\begin{example}[Genus $1$ - {\cite[Example 2.9]{BKLV}}\label{genus1}]
If the curve $C$ has genus $1$, then for any $1\leq m \leq d-1$ we have that $N^m(C_d)=R^m(C_d)$ and 
\begin{equation*}\label{E:cones-g1}
\Pseff^m(C_d)=\Nef^m(C_d)=\cone\left(x^{m-1}\theta, x^m-\frac{m}{d}x^{m-1}\theta\right) \subset N^m(C_d)_{\R} \cong \R^2.
\end{equation*}
\end{example}
\noindent This follows either by \cite{Ful} or by \cite[Theorem B]{BKLV} and Theorem \ref{T:B}\eqref{T:B1}.

\section{Abel-Jacobi faces}\label{S:AJ}

The aim of this section is to introduce some faces of the (tautological or not) pseudoeffective cones of $C_d$ obtained as contractibility faces of the Abel-Jacobi morphism $\alpha_d:C_d\to \Pic^d(C)$. 

\subsection{Contractibility faces}\label{SS:contract}

In this subsection,  we will  introduce the contractibility faces associated to any morphism $\pi:X\to Y$ between irreducible projective varieties. 
The definition of the contractibility faces is based on the contractibility index introduced in \cite[\S 4.2]{fl2}. 

\begin{defi}\label{contract}
Let $\pi:X \to Y$ be a morphism between irreducible projective varieties and fix the class $h\in N^1(Y)$ of an ample Cartier divisor on $Y$. Given an element $\alpha\in \Pseff_k(X)$ for some $0\leq k \leq \dim X$, the \emph{contractibility index} of $\alpha$, denoted by $\contr_{\pi}(\alpha)$, is equal to the largest non-negative 
integer \color{black}$c\leq k+1$ such that $\alpha\cdot \pi^*(h^{k+1-c})=0$.
\end{defi}
Since $\alpha\cdot \pi^*(h^{k+1})=0$ for dimension reasons, the  contractibility index is well-defined and it is easy to see that it does not depend on $h$. The following properties are immediate:
\begin{itemize}
\item $\max\{0, k-\dim \pi(X)\}\leq \contr_{\pi}(\alpha)\leq k+1$ and equality holds in the last inequality if and only if  $\alpha=0$;
\item $\contr_{\pi}(\alpha)>0 \Longleftrightarrow \pi_*(\alpha)=0$;
\item If $\alpha=[Z]$ for an irreducible subvariety $Z\subseteq X$ of dimension $k$, then $\contr_{\pi}(Z):=\contr_{\pi}(\alpha)=\dim Z-\dim \pi(Z)$.
\end{itemize}
\begin{defi}\label{contract2}
Let $\pi:X \to Y$ be a morphism between irreducible projective varieties and let $k, r$ be integers such that $0 \le k \le \dim X, r \ge 0$. Set
$$F_k^{\geq r}(\pi) = \cone(\{\alpha \in \Pseff_k(X) : \contr_{\pi}(\alpha)\ge r \}).$$  
We set $c_{\pi}(r) = -1$ if there is no irreducible subvariety $Z \subseteq X$ with $\contr_{\pi}(Z)\ge r$; otherwise we define 
\begin{displaymath}
c_{\pi}(r) =\max\left\{ 0 \le k \le \dim X \left| \begin{array}{l} \text{there exists an irreducible subvariety }Z \subseteq X\\ \text{with } \dim Z = k \text{ and } \contr_{\pi}(Z)\ge r \end{array}\right.\right\}.
\end{displaymath}
\end{defi}
Note that $F_k^{\ge r}(\pi) = \emptyset$ if $r > k+1$, $F_k^{\ge k+1}(\pi) = \{0\}$,  $F_{\dim X}^{\ge r}(\pi) = \{0\}$ if and only if $r \ge 1 + \dim X -\dim \pi(X)$ and $F_k^{\ge r}(\pi) = \Pseff_k(X)$ if and only if $r \le \max\{0, k-\dim \pi(X)\}$. Moreover, if $c_{\pi}(r) \ge 0$, then $r \le c_{\pi}(r) \le \dim X$.


The following criterion of extremality follows from \cite[Thm. 4.15]{fl2} and it is an improvement of \cite[Prop. 2.1, 2.2 and Rmk. 2.7]{CC}. 

\begin{prop}\label{extr-crit}
Let $\pi:X \to Y$ be a morphism between projective irreducible varieties and fix $k, r$ integers such that $1+ \max\{0, k-\dim \pi(X)\} \le r \le k \le \dim X$. 
Then
\begin{enumerate}[(i)]
\item \label{extr-crit1} The cone $F_k^{\geq r}(\pi)$ is a face of $\Pseff_k(X)$. In particular, the cone $F_k^{\geq r}(\pi) \cap \Eff_k(X)$ is a face of $ \Eff_k(X)$. Moreover $F_k^{\geq r}(\pi)$ is non-trivial
for $r \le k \le c_{\pi}(r)$.
\item \label{extr-crit2} Suppose that $r \le k \le c_{\pi}(r)$. The  number of irreducible subvarieties of $X$ of dimension $k$ and contractibility index at least $r$ is finite if and only if $k=c_{\pi}(r)$.

In this case, if we denote by  $Z_1,\ldots, Z_s$ the irreducible subvarieties of $X$ of dimension $c_{\pi}(r)$ and contractibility index at least $r$, we have that
$$F_{c_{\pi}(r)}^{\geq r}(\pi)=\cone([Z_1],\ldots,[Z_s])=F_{c_{\pi}(r)}^{\geq r}(\pi)\cap \Eff_k(X).$$
\end{enumerate}
\end{prop}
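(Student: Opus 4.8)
The statement to prove is Proposition \ref{extr-crit}, which asserts that $F_k^{\geq r}(\pi)$ is a face of $\Pseff_k(X)$ (and $F_k^{\geq r}(\pi) \cap \Eff_k(X)$ a face of $\Eff_k(X)$), non-triviality for $r \le k \le c_\pi(r)$, and the finiteness-versus-extremality dichotomy at $k = c_\pi(r)$ with the explicit description as a conic hull of finitely many subvariety classes.

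\medskip

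\noindent\textbf{Plan of proof.}

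The plan is to deduce everything from the contractibility machinery of \cite[Thm. 4.15]{fl2} together with elementary convex-geometric manipulations. First I would recall the key linear-algebraic reformulation: fixing an ample class $h \in N^1(Y)$, the map $\alpha \mapsto \alpha \cdot \pi^*(h^{k+1-r})$ is a linear functional on $N_k(X)$, and by the first bullet after Definition \ref{contract} it is non-negative on $\Pseff_k(X)$ (indeed, $\pi^*(h^{k+1-r})$ is a limit of classes of effective cycles, being a power of a semiample class times itself; more precisely $\alpha \cdot \pi^*(h^{j}) \ge 0$ for all pseudoeffective $\alpha$ and all $j$, because $\pi^* h$ is nef). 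Hence $H := \{\beta \in N_k(X) : \beta \cdot \pi^*(h^{k+1-r}) = 0\}$ is a supporting hyperplane of $\Pseff_k(X)$, and its trace $F := H \cap \Pseff_k(X)$ is an exposed, hence extremal, face. The point is then that $F_k^{\geq r}(\pi) = F$: the inclusion $\subseteq$ is immediate since $\contr_\pi(\alpha) \ge r$ forces $\alpha \cdot \pi^*(h^{k+1-r}) = 0$; for $\supseteq$, observe that if $\beta \in \Pseff_k(X)$ and $\beta \cdot \pi^*(h^{k+1-r}) = 0$, then by nefness of $\pi^*h$ and $\beta \ge 0$ one gets $\beta \cdot \pi^*(h^{k+1-c}) = 0$ for all $c \le r$ as well (intersecting with further powers of the nef class $\pi^* h$ preserves the vanishing — this is where one uses that intersecting a pseudoeffective class with a nef class stays pseudoeffective, combined with the fact that a pseudoeffective class with zero intersection against a nef class, when further hit with that nef class, remains zero), so $\contr_\pi(\beta) \ge r$. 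This proves part \eqref{extr-crit1}'s first assertion; intersecting the face $F$ with the subcone $\Eff_k(X)$ gives a face of $\Eff_k(X)$ by the general fact that the intersection of a face of a cone with a subcone contained in it is a face of that subcone (here using $\Eff_k \subseteq \Pseff_k$ and that $F \cap \Eff_k = F_k^{\ge r}(\pi) \cap \Eff_k$).

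For non-triviality when $r \le k \le c_\pi(r)$: by definition of $c_\pi(r)$ there exists an irreducible subvariety $Z \subseteq X$ with $\dim Z = c_\pi(r)$ and $\contr_\pi(Z) \ge r$. If $k = c_\pi(r)$ we are done since $[Z] \in F_k^{\ge r}(\pi)$ and $[Z] \ne 0$ (an effective cycle is non-zero in $N_k$). If $r \le k < c_\pi(r)$, I would cut $Z$ down by general hyperplane sections to produce an irreducible subvariety $Z'$ of dimension $k$; the contractibility index behaves well under such cuts — precisely, slicing by a general very ample divisor drops both $\dim Z$ and $\dim \pi(Z)$ by one (the latter because $\pi|_Z$ is still generically finite onto its image after the cut, by Bertini-type genericity, so the fiber dimension is unchanged), hence $\contr_\pi(Z') = \dim Z' - \dim \pi(Z') = \dim Z - \dim \pi(Z) = \contr_\pi(Z) \ge r$; iterating $c_\pi(r) - k$ times yields $Z'$ with $\dim Z' = k$ and $\contr_\pi(Z') \ge r$, so again $[Z'] \in F_k^{\ge r}(\pi) \setminus \{0\}$. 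One must be slightly careful that $r \le k$ guarantees we don't cut below the threshold; this is exactly the hypothesis.

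For part \eqref{extr-crit2}, the finiteness dichotomy and the explicit description are precisely the content of \cite[Thm. 4.15]{fl2}, so I would invoke it: when $k = c_\pi(r)$, maximality of $c_\pi(r)$ forbids any irreducible $Z$ of dimension $>k$ with contractibility index $\ge r$, and the cited theorem shows that the subvarieties of dimension exactly $c_\pi(r)$ with contractibility index $\ge r$ are finite in number, say $Z_1,\dots,Z_s$, and that $F_{c_\pi(r)}^{\ge r}(\pi)$ is spanned by their classes — the equality with $F_{c_\pi(r)}^{\ge r}(\pi) \cap \Eff_k(X)$ then follows since each $[Z_i]$ is an effective class, so the conic hull lies in $\Eff_k(X)$, while the reverse inclusion is trivial. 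Conversely, if $k < c_\pi(r)$, the hyperplane-section construction from the previous paragraph produces infinitely many distinct irreducible subvarieties of dimension $k$ with contractibility index $\ge r$ (varying the choice of the general divisors in a continuous family gives a positive-dimensional family of such $Z'$, hence infinitely many over the infinite ground field), establishing the "only if" direction.

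\medskip

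\noindent\textbf{Main obstacle.} The principal subtlety is the precise behavior of $\contr_\pi$ under general hyperplane sections — in particular, verifying that $\dim \pi(Z')$ drops by exactly one (not zero, not two) when $Z$ is sliced by a general member of a very ample linear system, which requires a Bertini-irreducibility argument for $Z$ and a dimension count for $\pi|_Z$ valid in arbitrary characteristic; over a non-perfect or positive-characteristic base one has to be mildly careful, but since we work over an algebraically closed field Bertini applies. The other potentially delicate point — the finiteness statement in \eqref{extr-crit2} and the spanning of the face — is not reproved here but imported wholesale from \cite[Thm. 4.15]{fl2}, so the real work on our side is the face property via the supporting-hyperplane argument and the non-triviality via slicing.
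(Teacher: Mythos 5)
Your non-triviality argument (and with it the ``only if'' half of part (ii)) rests on the claim that slicing $Z$ by a general very ample divisor on $X$ drops $\dim \pi(Z)$ by one, so that the contractibility index is preserved under such cuts. This is false precisely in the situation at hand: since $r\geq 1$, the general fiber of $\pi|_Z$ has positive dimension $\contr_{\pi}(Z)\geq r$, so $\pi|_Z$ is not generically finite, and a general ample divisor $H$ on $X$ meets every positive-dimensional fiber; hence $\pi(Z\cap H)=\pi(Z)$ and the slice $Z'$ satisfies $\contr_{\pi}(Z')=\contr_{\pi}(Z)-1$, not $\contr_{\pi}(Z)$. For instance, for the projection $\pi\colon \PP^1\times\PP^1\to \PP^1$ and $Z=X$, a general member of an ample linear system is a curve surjecting onto the base, so it has contractibility index $0$, not $1$. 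Consequently, cutting $c_{\pi}(r)-k$ times starting from a maximal-dimensional $Z$ only produces varieties of contractibility index $\contr_{\pi}(Z)-(c_{\pi}(r)-k)$, which can fall below $r$, so neither the non-triviality of $F_k^{\geq r}(\pi)$ for $r\leq k< c_{\pi}(r)$ nor the infinitude of $k$-dimensional subvarieties of index $\geq r$ follows from your construction. The paper instead cuts the \emph{image}: choose a generic codimension-one subvariety $V\subset \pi(Z)$ meeting the open locus where the fibers of $\pi|_Z$ have dimension $\contr_{\pi}(Z)$, and take an irreducible component $W$ of $(\pi|_Z)^{-1}(V)$ dominating $V$; then $\dim W=\dim Z-1$ while $\contr_{\pi}(W)=\contr_{\pi}(Z)\geq r$, and varying $V$ gives infinitely many such $W$ (when $\pi(Z)$ is a point one simply takes any divisor in $Z$). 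This is the idea your proof is missing.

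There is also a flaw in your argument for the face property in part (i) when $r\geq 2$: the class $\alpha\cdot \pi^*(h^{k+1-r})$ lies in $N_{r-1}(X)$, not in $\R$, so $\alpha\mapsto \alpha\cdot\pi^*(h^{k+1-r})$ is not a linear functional, $H$ is not a supporting hyperplane, and ``exposed face'' does not apply as written. The paper argues as follows: if $\beta_1+\beta_2\in F_k^{\geq r}(\pi)$ with $\beta_i\in\Pseff_k(X)$, then each $\beta_i\cdot\pi^*(h^{k+1-r})$ is pseudoeffective (because $\pi^*h$ is nef, hence a limit of ample classes) and these classes sum to zero, so both vanish since $\Pseff_{r-1}(X)$ is salient by \cite[Prop. 1.3]{BFJ}, \cite[Thm. 1.4(i)]{fl1}; the salience of the lower-dimensional pseudoeffective cone is exactly the ingredient you need and do not invoke (one could instead pair further with $x^{r-1}$ for an ample $x$ on $X$ to get a scalar, but then one must know that this pairing is strictly positive on nonzero pseudoeffective classes, which again requires citation, not Bertini). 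Your treatment of the rest of part (ii) -- importing the finiteness at $k=c_{\pi}(r)$ and the spanning statement from \cite[Thm. 4.15]{fl2}, and deducing the equality with $F_{c_{\pi}(r)}^{\geq r}(\pi)\cap\Eff_k(X)$ from effectivity of the $[Z_i]$ -- agrees with the paper.
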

Because of \eqref{extr-crit1}, we will call $F_k^{\geq r}(\pi)$ the \emph{$r$-th contractibility face} of $\Pseff_k(X)$. 
\begin{proof}
Note that for any $\alpha \in \Pseff_k(X)$, we have $\alpha \in  F_k^{\geq r}(\pi)$ if and only if $\alpha\cdot \pi^*(h^{k+1-r})=0$. Let $\beta_1, \beta_2 \in \Pseff_k(X)$ be such that $\beta_1+\beta_2 \in F_k^{\geq r}(\pi)$. Then 
$\beta_1\cdot \pi^*(h^{k+1-r})+\beta_2\cdot \pi^*(h^{k+1-r}) = 0$ and $\beta_i\cdot \pi^*(h^{k+1-r}) \in \Pseff_{r-1}(X)$ (because $\pi^*(h)$ is nef, hence limit of ample classes) for $i=1, 2$, so that $\beta_1\cdot \pi^*(h^{k+1-r}) = \beta_2\cdot \pi^*(h^{k+1-r})=0$ since $\Pseff_{r-1}(X)$ is salient by  \cite[Prop. 1.3]{BFJ}, \cite[Thm. 1.4(i)]{fl1}. Then $\beta_1, \beta_2 \in F_k^{\geq r}(\pi)$. This proves the first assertion in \eqref{extr-crit1}. 

Assume now that $r < k \le c_{\pi}(r)$ and let $Z \subseteq X$ be an irreducible subvariety such that $\dim Z = k, \contr_{\pi}(Z) \ge r$. We claim that there are infinitely many irreducible subvarieties $W \subseteq X$ with $\dim W = k-1$ and $\contr_{\pi}(W) \ge r$. It follows by this claim that $F_k^{\geq r}(\pi)$ is non-trivial
for $r \le k \le c_{\pi}(r)$. To see the claim we consider two cases. If $\pi(Z)$ is not a point, then pick a generic codimension one subvariety $V \subset \pi(Z)$ such that $V$ intersects the open subset of $\pi(Z)$ where fibers of $\pi_{|Z}$ have dimension $\contr_{\pi}(Z)$. The inverse image $(\pi_{|Z})^{-1}(V)$ will have an irreducible component $W$ that dominates $V$ and therefore with $\dim W = k-1$ and $\contr_{\pi}(W) = \contr_{\pi}(Z) \ge r$. If $\pi(Z)$ is a point, then pick any codimension one subvariety $W \subset Z$. Then $\dim W = k-1$ and $\contr_{\pi}(W) = \dim W = k-1 \ge r$. In either case, there are infinitely many such 
subvarieties $W$ \color{black}and the claim is proved.

Consider now the first assertion of part \eqref{extr-crit2}. The only if part follows immediately by the claim starting with an irreducible subvariety $Z \subset X$ of dimension $c_{\pi}(r) $ and contractibility index at least $r$ (such a $Z$ exists by the definition of $c_{\pi}(r)$). The if part is proved in \cite[Thm. 4.15(1)]{fl2}. 

For the second assertion of \eqref{extr-crit2}: the first equality follows from \cite[Thm. 4.15(2)]{fl2} and the second equality follows directly from the first one.
\end{proof}

\begin{remark}\label{cong}
Let $1 \le r \le k \le \dim X$. It is natural to wonder if the following statements hold true:
\begin{enumerate}
\item (Strong$^r(\pi)$) $F_k^r(\pi)=\ov{F_k^r(\pi)\cap \Eff_k(X)}$.
 \item (Weak$^r(\pi)$) $\langle F_k^r(\pi)\rangle=\langle F_k^r(\pi)\cap \Eff_k(X)\rangle$.
\end{enumerate}
For $r=1$, the above statements reduce to, respectively,  the strong and weak conjecture in \cite[Conj. 1.1]{fl2}. If Weak$^{r}(\pi)$ holds true then 
$F_k^{\ge r}(\pi) = \{0\}$ for any $k>c_{\pi}(r)$. 
 If we also assume that $k \ge r \ge \dim X - \dim \pi(X) + 1$ it is easy to see, using \cite[Thm. 4.13]{fl2}, that the last expectation holds. Moreover, since $F_k^{\ge r}(\pi) \subseteq F_k^{\ge r-1}(\pi)$ we expect that $F_k^{\ge r}(\pi) = \{0\}$ when $k > c_{\pi}(1)$.
\end{remark}

\subsection{Brill-Noether varieties}\label{SS:BNvar}

In order to apply the previous criterion to  the 
Abel-Jacobi map $\alpha_d: C_d \to \Pic^d(C)$, we need to know the subvarieties of $C_d$ that have contractibility index at least $r$ with respect to $\alpha_d$.
As we will see, these subvarieties turn out to be contained in the \emph{Brill-Noether} variety $C_d^r\subseteq C_d$ which is defined (set theoretically) as:
$$C_d^r:=\{D\in C_d: \: \dim |D|\geq r\}.$$
Note that $C_d^r=\alpha_d^{-1}(W_d^r(C))$ where $W_d^r(C)$ is the Brill-Noether variety in $\Pic^d(C)$ which is defined (set theoretically) as
$$W_d^r(C) = \{L\in \Pic^d(C): \: h^0(C,L)\geq r+1\}.$$
The Brill-Noether varieties $C_d^r$ and $W_d^r(C)$ are in a natural way determinantal varieties (see \cite[Chap. IV]{GAC1}).
From Riemann-Roch theorem, we have the following trivial cases for $W_d^r(C)$ and $C_d^r$:
\begin{itemize}
\item If $r\leq \max\{-1,d-g\}$ then $W_d^r(C)=\Pic^d(C)$, and hence $C_d^r=C_d$. 
\item If $r=0$ and $d\leq g-1$ then $\alpha_d: C_d^0=C_d\to W_d^0(C)$ is a resolution of singularities. 
\item If $d\geq 2g-1$ then $W_d^r(C)=
\begin{cases}
\Pic^d(C) & \text{ if } r\leq d-g, \\
\emptyset & \text{ if } r>d-g,
\end{cases}$
and 
$C_d^r=
\begin{cases}
C_d & \text{ if } r\leq d-g, \\
\emptyset & \text{ if } r>d-g.
\end{cases}$
\end{itemize}

The non-emptiness of $C_d^r$ is equivalent to the existence of a linear system of degree $d$ and dimension $r$ on $C$, and we define an invariant of $C$ controlling the existence of such linear systems. 


\begin{defi}\label{D:gonindex}
For any integer $r \ge 1$, the \emph{$r$-th gonality index} of $C$, denoted by  $\gon_r(C)$, is the smallest integer $d$ such that $C$ admits a $\g_d^r$.  
\end{defi}
Clearly, $d\geq \gon_r(C)$ if and only if the curve $C$ admits  a $\g_d^r$. Observe that if $r=1$ then $\gon_1(C)$ is the (usual) gonality $\gon(C)$ of $C$. 
The possible values that the $r$-th gonality index can achieve are described in the following

\begin{lemma}\label{L:gonindex}
The $r$-th gonality index of $C$ satisfies the following
\begin{equation}\label{gonlarge}
\gon_r(C)=
\begin{cases}
g+r & \text{ if }r\geq g,\\
2g-2 & \text{ if }r=g-1, 
\end{cases}
\end{equation}
\begin{equation}\label{gonsmall}
2r\leq \gon_r(C)\leq \gamma(r):=
\left\lceil \frac{rg}{r+1} \right\rceil +r  \: \text{if }1\leq r \leq g-2,
\end{equation}
where the first inequality is achieved if and only if $C$ is hyperelliptic and the second inequality is achieved  for the general curve $C$.
\end{lemma}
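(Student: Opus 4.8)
The plan is to separate the statement into its constituent claims: the two closed-form formulas in \eqref{gonlarge}, the two inequalities in \eqref{gonsmall}, and the two sharpness statements. Throughout I would freely use Riemann--Roch and the Brill--Noether theory recalled in \S\ref{SS:BNvar}, in particular the fact that $C_d^r\neq\emptyset$ iff $C$ admits a $\g^r_d$, together with the classical existence theorem (Fact \ref{BNclass}\eqref{BNclass3}) saying that $C$ admits a $\g^r_d$ whenever $\rho(g,r,d)=g-(r+1)(g-d+r)\geq 0$.

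First, for $r\geq g$: given any $\g^r_d$, the line bundle $L$ has $h^0(L)=r+1\geq g+1$, so $h^1(L)=h^0(L)-d+g-1\geq 0$ forces $d\geq h^0(L)+g-1-h^1(L)$; more directly, since a special line bundle satisfies $h^0\leq \frac{d}{2}+1$ (Clifford), if $L$ is special then $r+1\leq \frac d2+1$, i.e. $d\geq 2r\geq 2g>g+r$ is impossible to beat $g+r$ only via non-special bundles — so $L$ must be non-special, whence $d\geq 2g-1>g+r-1$ is not quite it either; the clean argument is: if $h^0(L)\geq g+1$ then $L$ is necessarily non-special (a special bundle has $h^0\leq g$ by $h^1\geq 1$ and Clifford-type bounds — more simply $h^0(L)\le g$ when $h^1(L)>0$ since then $h^0(L)=d+1-g+h^1(L)$ and one uses $d\le 2g-2$, giving $h^0\le g-1+h^1$; this needs care), and then Riemann--Roch gives $d=h^0(L)-1+g=r+g$ exactly. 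For $r=g-1$: a $\g^{g-1}_d$ with $d$ minimal must be special (else $d=r+g=2g-1$, but we can do better), namely $L=K_C$ gives a $\g^{g-1}_{2g-2}$, and any special $L$ with $h^0(L)=g$ has $h^1(L)=1$ so $K_C\otimes L^{-1}$ is effective of degree $0$, forcing $L\cong K_C$ and $d=2g-2$; one must check no non-special bundle does better, i.e. $2g-2<g+(g-1)=2g-1$, which holds.

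Next, \eqref{gonsmall} for $1\leq r\leq g-2$. The upper bound $\gon_r(C)\leq \gamma(r)=\lceil rg/(r+1)\rceil+r$ follows from Brill--Noether existence: with $d=\gamma(r)$ one checks $\rho(g,r,d)=g-(r+1)(g-d+r)\geq 0$, since $d-r\geq rg/(r+1)$ gives $g-d+r\leq g/(r+1)$, hence $(r+1)(g-d+r)\leq g$. The lower bound $\gon_r(C)\geq 2r$ is Clifford: a $\g^r_d$ with $d<2r$ would have $L$ special (as $d\le 2r-1\le 2g-2$ in range) with $h^0(L)=r+1>\frac d2+1$, contradicting Clifford's theorem; equality $d=2r$ forces equality in Clifford, which by the classification of the Clifford-extremal case happens iff $C$ is hyperelliptic (and $L$ is a multiple of the $\g^1_2$). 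For the sharpness of the upper bound I would invoke the Brill--Noether general curve: for such a curve, $C$ admits a $\g^r_d$ iff $\rho(g,r,d)\geq 0$ (the "only if" is the Brill--Noether nonexistence statement), so $\gon_r(C)$ is exactly the least $d$ with $(r+1)(g-d+r)\leq g$, which a short computation identifies with $\gamma(r)=\lceil rg/(r+1)\rceil+r$.

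The main obstacle I anticipate is the bookkeeping in the first paragraph: pinning down precisely when a $\g^r_d$ of minimal degree is forced to be non-special versus special, and ruling out "accidental" low-degree special series in the boundary cases $r\geq g$ and $r=g-1$. Concretely, one needs the clean dichotomy: if $h^0(L)=r+1$ and $d$ is as small as possible, then either $L$ is non-special and $d=r+g$, or $L$ is special, in which case $h^0(K_C\otimes L^{-1})\geq 1$ and a degree count on $K_C\otimes L^{-1}$ (combined with Clifford when $r\le g-2$) both exists — giving the extra series responsible for $\gamma(r)$ — and is rigid enough in the extreme cases $r=g-1,\,r\ge g$ to force $d=2g-2$ or to be absent. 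Once that dichotomy is organized, each of the six sub-claims reduces to a one-line Riemann--Roch or Brill--Noether computation.
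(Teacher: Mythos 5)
Your plan is the same as the paper's (very short) proof: Clifford plus Riemann--Roch for \eqref{gonlarge} and for the lower bound in \eqref{gonsmall} together with its hyperelliptic equality case, the Brill--Noether existence theorem for the upper bound, and the Griffiths--Harris nonexistence theorem for sharpness on the general curve. The skeleton is right, but two points need repair before it is a proof. The more substantive one is the input for $\gon_r(C)\leq \gamma(r)$: this inequality is asserted for \emph{every} curve, so the existence statement you need is the Kleiman--Laksov/Kempf theorem that $W^r_d(C)\neq \emptyset$ whenever $\rho(g,r,d)\geq 0$ for an arbitrary curve. Fact \ref{BNclass}\eqref{BNclass3}, which you cite for this, is a statement about Brill--Noether \emph{general} curves only; used literally it gives the upper bound only for general $C$, which is exactly the distinction the sharpness clause is drawing. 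Your verification that $\rho(g,r,\gamma(r))\geq 0$, and your use of the nonexistence half for the general curve, are fine.

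The second point is the bookkeeping in the boundary cases, where several intermediate claims are off even though the conclusions are right. For $r\geq g$ the chain ``$d\geq 2r\geq 2g> g+r$'' is false (when $r\geq g$ one has $g+r\geq 2g$); the clean use of Clifford is: a special $L$ satisfies $d\leq 2g-2$ and $h^0(L)\leq d/2+1\leq g$, so $h^0(L)\geq r+1\geq g+1$ forces $L$ non-special, and Riemann--Roch then gives $d\geq r+g$ (only an inequality, since the $\g^r_d$ need not be complete); equality also requires the existence half, which you omit but which is immediate, as any line bundle of degree $g+r\geq 2g$ is non-special and hence has $h^0=r+1$. For $r=g-1$, the assertion ``any special $L$ with $h^0(L)=g$ has $h^1(L)=1$'' is not available before you know $d=2g-2$ (a priori $h^1(L)=2g-1-d$ can be larger); again Clifford does the job: $g\leq h^0(L)\leq d/2+1$ gives $d\geq 2g-2$, and $K_C$ realizes the bound. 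Similarly, in the lower bound $2r\leq \gon_r(C)$, the parenthetical ``$d\leq 2r-1\leq 2g-2$'' does not by itself make $L$ special; specialness follows from Riemann--Roch (a non-special $L$ with $h^0\geq r+1$ has $d\geq g+r>2r$ since $r\leq g-2$), after which Clifford applies, and for the hyperelliptic equality case you should also record the converse, namely that $r\cdot \g^1_2$ is a $\g^r_{2r}$. With these repairs your argument coincides with the paper's.
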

\begin{proof}
From Clifford's inequality and Riemann-Roch theorem, it follows easily that:
\begin{itemize}
\item any $\g_d^{g-1}$ on $C$ is such that $d\geq 2g-2$ with equality if and only if $\g_{2g-2}^{g-1}$ is the 
 complete canonical system $|K_C|$;
\item  if $r\geq g$ then any $\g_d^{r}$ is such that $d\geq r+g\geq 2g$.
\end{itemize}
These two facts imply the first part of the statement.

For the second part of the statement: the lower bound is provided by Clifford's theorem and we have equality if and only if the curve is hyperelliptic; the upper bound is provided by Brill-Noether theory and equality holds for the general curve by 
\cite[Chap. V, Thm. 1.5]{GAC1} (the proof of Griffiths and Harris works over any algebraically closed field, see \cite{oss}) .
\end{proof}

\begin{remark}\label{R:r-gon}
It follows easily from the previous lemma that if $d\geq \gon_n(C)$ then $d-n\geq \min\{n,g\}$, or equivalently that $r(n):=\min\{n,d-n,g\}=\min\{n,g\}.$
\end{remark}

The properties of the Brill-Noether varieties (in the non-trivial cases) are collected in the following fact that summarizes the main results of the classical Brill-Noether theory (see \cite[Chap. IV and VII]{GAC1}). 

\begin{fact}\label{BNclass}
Fix integers $r$ and $d$ such that $\max\{1,d-g+1\}\leq r$ and $0 \leq d\leq 2g-2$. 
\begin{enumerate}[(i)]
\item \label{BNclass0} The open subset $C_d^r\setminus C_d^{r+1}\subset C_d^r$ is dense. Therefore, the morphism $(\alpha_d)_{|C_d^r}:C_d^r \twoheadrightarrow W_d^r$ is generically a $\PP^r$-fibration and each irreducible component of $C_d^r$ has contractibility index exactly $r$. 
\item \label{BNclass00} $C_d^r$ is non-empty if and only if $d\geq \gon_r(C)$. In particular, we have the following 
\begin{equation}\label{E:C-nonemp}
d\geq \gamma(r):= \left\lceil \frac{rg}{r+1} \right\rceil +r \Rightarrow C_d^r\neq \emptyset \Rightarrow d\geq 2r.
\end{equation}
\item \label{BNclass1} 
If $C_d^r$ is non-empty, every irreducible component of $C_d^{r}$ has dimension at least $r+\rho(g,r,d)=r+g-(r+1)(g-d+r)=d-r(g-d+r)$ and at most $r+(d-2r)=d-r$.
\item \label{BNclass2} Assume that either $C_d^{r}$ is empty or has pure dimension $r+\rho(g,r,d)$. Then the class of $C_d^{r}$  is equal to 
$$[C_d^{r}]=c_d^r:=\prod_{i=0}^r \frac{i!}{(g-d+r+i-1)!} \sum_{\alpha=0}^r(-1)^{\alpha} \frac{(g-d+r+\alpha-1)!}{\alpha!(r-\alpha)!} x^{\alpha}\theta^{r(g-d+r)-\alpha}.$$
\item \label{BNclass3} Assume that $C$ is a general curve of genus $g$.
\begin{itemize}
\item If $\rho(g,r,d)<0$ then $C_d^r$ is empty;
\item If $\rho(g,r,d)=0$ then $C_d^r$ is a disjoint union of $g!\displaystyle \prod_{i=1}^r\frac{i!}{(g-d+r+i)!}$ projective spaces of dimension $r$;
\item If $\rho(g,r,d)>0$ then $C_d^r$ is irreducible of dimension $r+\rho(g,r,d)$.  
\end{itemize}
\end{enumerate}
\end{fact}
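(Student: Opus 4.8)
The final statement to prove is Fact \ref{BNclass}, which collects the classical results of Brill-Noether theory about the varieties $C_d^r$ and $W_d^r(C)$. Since this is labelled a "Fact" rather than a new theorem, the appropriate strategy is not to reprove Brill-Noether theory from scratch but to assemble the statements from the standard reference \cite{GAC1} (Arbarello–Cornalba–Griffiths–Harris) and to carefully carry out the translations between $W_d^r(C)\subseteq \Pic^d(C)$ and its preimage $C_d^r = \alpha_d^{-1}(W_d^r(C))\subseteq C_d$ under the Abel–Jacobi morphism.

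Here is the plan, item by item. For \eqref{BNclass0}: the density of $C_d^r\setminus C_d^{r+1}$ follows from the basic structure theory of $W_d^r$ as a determinantal variety — on each component of $W_d^r$ a general point lies in $W_d^r\setminus W_d^{r+1}$ (otherwise the whole component would sit in $W_d^{r+1}$, contradicting e.g. the dimension bounds, or one appeals directly to \cite[Ch. IV]{GAC1}) — and then one pulls this back along the surjection $(\alpha_d)_{|C_d^r}$. Over the dense open $W_d^r\setminus W_d^{r+1}$, the fiber of $\alpha_d$ over $L$ is $|L|\cong\PP^r$, giving the generic $\PP^r$-fibration structure; hence each component $Z$ of $C_d^r$ dominates a component of $W_d^r$ with general fiber $\PP^r$, so $\contr_{\alpha_d}(Z) = \dim Z - \dim\alpha_d(Z) = r$, using the formula for the contractibility index of an irreducible subvariety recalled in \S\ref{SS:contract}. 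For \eqref{BNclass00}: $C_d^r\neq\emptyset$ iff $W_d^r\neq\emptyset$ iff $C$ admits a $\g^r_d$ iff $d\geq\gon_r(C)$ by Definition \ref{D:gonindex}; the chain of implications in \eqref{E:C-nonemp} is then immediate from the bounds $2r\leq\gon_r(C)\leq\gamma(r)$ proved in Lemma \ref{L:gonindex}.

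For \eqref{BNclass1}: the lower bound $\dim Z\geq r+\rho(g,r,d)$ on every component is the classical existence-theory dimension estimate (the determinantal/excess-dimension bound for $W_d^r$ pushed up by the $\PP^r$ in the fibers), and the upper bound $\dim Z\leq d-r$ comes from Clifford's theorem: on a component of $C_d^r$ with general point $D$ outside $C_d^{r+1}$, the complete linear series $|D|$ is a $\g^r_d$, so by Clifford $r\leq d/2$ when $d\leq 2g-2$, and more precisely the base-point-free part forces the stated inequality — one can just cite \cite[Ch. IV]{GAC1}. The identities $r+\rho(g,r,d)=r+g-(r+1)(g-d+r)=d-r(g-d+r)$ are a direct algebraic check. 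For \eqref{BNclass2}, the formula for $[C_d^r]$ under the expected-dimension hypothesis is the Kempf–Kleiman–Laksov / Porteous determinantal class computation; I would quote \cite[Ch. IV, \S3 and Ch. VII]{GAC1} for the class of $W_d^r$ and then pull back along $\alpha_d$, reconciling the normalization with the monomials $x^\alpha\theta^{r(g-d+r)-\alpha}$ via the intersection numbers in Proposition \ref{basetaut}\eqref{basetaut1}. For \eqref{BNclass3}, these are precisely the Brill–Noether existence theorem (Kempf, Kleiman–Laksov), the non-existence theorem (Griffiths–Harris), the connectedness/irreducibility theorem (Fulton–Lazarsfeld) and the Castelnuovo count of the number of $\g^r_d$'s when $\rho=0$, all for a general curve; again one invokes \cite[Ch. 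V and Ch. VII]{GAC1}, noting (as in the proof of Lemma \ref{L:gonindex}) that these proofs are characteristic-free, cf. \cite{oss}.

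The only genuine "work" — as opposed to citation and bookkeeping — is making the Abel–Jacobi translation airtight: that every irreducible component of $C_d^r$ really has contractibility index exactly $r$ (not more), which requires knowing that a general such component is not contained in $C_d^{r+1}$, i.e. the density statement \eqref{BNclass0}. I expect this to be the main point to get right, since it is what links the determinantal geometry of $W_d^r$ downstairs to the contractibility faces upstairs that the rest of the paper uses; everything else is a faithful transcription of the classical theory from \cite{GAC1}.
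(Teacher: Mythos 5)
Your overall strategy — treat this as a citation assembly from \cite{GAC1} plus a careful translation between $W_d^r(C)$ and $C_d^r=\alpha_d^{-1}(W_d^r(C))$ — is exactly the paper's, and your handling of \eqref{BNclass00}, \eqref{BNclass3} and the lower bound in \eqref{BNclass1} matches it. But the two places where you supply your own mechanism instead of a citation are precisely where the argument breaks. For \eqref{BNclass0}, density of $W_d^r\setminus W_d^{r+1}$ in $W_d^r$ does \emph{not} pull back along $\alpha_d$: since the fiber dimension jumps over $W_d^{r+1}$, the preimage $\alpha_d^{-1}(W_d^r)$ can a priori acquire irreducible components lying entirely over $W_d^{r+1}$ (swept out by the $\PP^{r+1}$'s, hence not contained in the closure of the $\PP^r$-locus), and nothing downstairs rules this out. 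The statement you need is the one the paper cites, \cite[Chap.\ IV, Lemma 1.7]{GAC1}, which is about $C_d^r$ itself: no irreducible component of $C_d^r$ is contained in $C_d^{r+1}$. This matters because you correctly identified this point as the crux (it is what gives contractibility index exactly $r$ for every component), yet your derivation of it is a non sequitur; also your parenthetical justification downstairs (``otherwise the whole component would sit in $W_d^{r+1}$, contradicting the dimension bounds'') is not valid for special curves, where components of $W_d^{r+1}$ can well have dimension exceeding the Brill--Noether number.

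For \eqref{BNclass2}, your plan ``quote the class of $W_d^r$ and pull back along $\alpha_d$'' cannot work even in principle: $[C_d^r]\neq \alpha_d^*[W_d^r]$, as the codimensions already disagree ($C_d^r$ has codimension $r(g-d+r)$ in $C_d$, while $W_d^r$ has codimension $(r+1)(g-d+r)$ in $\Pic^d(C)$, and $g-d+r\geq 1$ in the range considered). The formula for $c_d^r$ comes from the determinantal (Porteous-type) computation carried out for $C_d^r$ directly inside $C_d$, which is what the paper cites (\cite[Chap.\ VII, \S 5]{GAC1}); no pullback of the class of $W_d^r$ is involved. A smaller inaccuracy in \eqref{BNclass1}: the upper bound $d-2r$ on the dimension of the components of $W_d^r$ (hence $d-r$ upstairs) is Martens' theorem \cite[Chap.\ IV, Thm.\ 5.1]{GAC1}, not a consequence of Clifford's theorem, which only constrains the pair $(r,d)$ and gives no bound on the dimension of the family. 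All three issues are repairable by citing the correct statements, but as written they are genuine gaps rather than just loose phrasing.
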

A curve satisfying the conditions of \eqref{BNclass3} is called a \emph{Brill-Noether general} curve.
\begin{proof}
\eqref{BNclass0}:  the first assertion follows from the fact that  there are no irreducible components of $C_d^r$ contained in $C_d^{r+1}$ by 
\cite[Chap. IV, Lemma 1.7]{GAC1} (the proof works over any algebraically closed field).
Using that the restriction of $\alpha_d$ to $C_d^r\setminus C_d^{r+1}$ is a $\PP^r$-fibration, the remaining assertions follow. 
 
\eqref{BNclass00}: $C_d^r$ is non-empty if and only if there exists a $\g_d^r$ on $C$ which is equivalent to the condition $d\geq \gon_r(C)$. The chain of implications \eqref{E:C-nonemp} follows then from Lemma \ref{L:gonindex}. 

Using \eqref{BNclass0}, part \eqref{BNclass1} follows from the fact that every irreducible component of $W_d^r(C)$ has dimension greater or equal to $\rho(g,r,d)$ by \cite[Chap. IV, Lemma 3.3]{GAC1}, 
\cite{kl1, kl2} and dimension at most $d-2r$ by Martens' theorem 
(see \cite[Chap. IV, Thm. 5.1]{GAC1}, \cite[Thm. 1]{mar})


For part \eqref{BNclass2}, see 
\cite[Chap. VII, \S 5]{GAC1} (the proof works over any algebraically closed field).

Part \eqref{BNclass3}: we will distinguish three cases according to the sign of $\rho(g,r,d)$. If $\rho(g,r,d)<0$ then $W_d^r(C)$ is empty by \cite[Chap. V, Thm. 1.5]{GAC1} 
(the proof of Griffiths and Harris works over any algebraically closed field, see \cite{oss}) and hence also $C_d^r$ is empty. If $\rho(g,r,d)=0$ then $W_d^r(C)$ consists of  finitely many $\g_d^r$ 
(see \cite[Chap. V, Thm. 1.3 and 1.6]{GAC1} - again holding over any algebraically closed field), whose number is equal to $g!\displaystyle \prod_{i=1}^r\frac{i!}{(g-d+r+i)!}$  by Castelnuovo's formula \cite[Chap. V, Formula (1.2)]{GAC1}, \cite{kl2}; hence the result for $C_d^r$ follows. If $\rho(g,r,d)>0$ then $W_d^r(C)$ is irreducible of dimension $\rho(g,r,d)$ by \cite[Chap. V, Thm. 1.4, Cor. of Thm. 1.6]{GAC1} 
and by \cite{gies}, \cite[Thm. 1.1 and Rmk. 1.7]{fl}, from which we deduce that $C_d^r$ is irreducible of dimension $r+\dim W_d^r(C)=r+\rho(g,r,d)$ using \eqref{BNclass0}.
\end{proof}

There are some  Brill-Noether varieties that are pure of the expected dimension for any curve (and not only for the general curve), as described in the following example. 

\begin{example}\label{BN=subor}
For any $g\leq d \leq 2g-2$, the Brill-Noether variety $C_d^{d-g+1}$ is irreducible of the expected dimension $g-1$ \footnote{Indeed, these are the unique Brill-Noether varieties that are also subordinate varieties; more specifically, $C_d^{d-g+1}=\Gamma_d(|K_C|)$, with the notation of  \eqref{D:sublocus}.}.  Indeed, the variety $W_d^{d-g+1}(C)$ is irreducible of dimension $2g-2-d$ since it is isomorphic, via the residuation map $L\mapsto K_C\otimes L^{-1}$, to the variety $W_{2g-2-d}^0(C)=\Im(\alpha_{2g-2-d})$. 
We conclude that $C_d^{d-g+1}$ is irreducible of dimension $d-g+1+\dim W_d^{d-g+1}(C)=g-1$ by Fact \ref{BNclass}\eqref{BNclass0}. 

Therefore, Fact \ref{BNclass}\eqref{BNclass2} implies that the class of $C_d^{d-g+1}$ is equal to 
\begin{equation}\label{E:clBN}
[C_d^{d-g+1}]= \sum_{\alpha=0}^{d-g+1} (-1)^{\alpha} \frac{x^{\alpha}\theta^{d-g+1-\alpha}}{(d-g+1-\alpha)!}.
\end{equation} 
\end{example}

\subsection{Abel-Jacobi faces}\label{SS:AJfaces}

We can now study the contractibility faces  associated to the 
Abel-Jacobi morphism $\alpha_d:C_d\to \Pic^d(C)$.

\begin{defi}[Abel-Jacobi faces]\label{D:AJfaces}
Let $0 \leq n \leq d$. For any $r$ such that $1 + \max\{0,n-g\}= 1 + \max\{0,n-\dim \alpha_d(C_d)\}\leq r\leq n$, let $\AJ_n^r(C_d):=F_n^{\geq r}(\alpha_d)\subseteq \Pseff_n(C_d)$ and call it the $r$-th \emph{Abel-Jacobi face} in dimension $n$. Moreover, we set $\AJt_n^r(C_d):=F_n^{\geq r}(\alpha_d)\cap \Psefft_n(C_d)\subseteq \Psefft_n(C_d)$ 
and call it the $r$-th \emph{tautological Abel-Jacobi face} in dimension $n$. 
\end{defi}


\begin{remark}
\label{nontr}
Let $0 \le n \leq d, 1 + \max\{0,n-g\} \le r \le n$. If $d < \gon(C)$ then $\theta$ is ample, whence $\AJ^r_n(C_d) = \AJt^r_n(C_d) = \{0\}$ by \cite[Cor. 3.15]{fl1}, \cite[Prop. 3.7]{fl2}. 
\end{remark}

Applying Proposition  \ref{extr-crit} to our case, we get the following result  that guarantees that the Abel-Jacobi faces are non-trivial, under suitable assumptions. 

\begin{prop}\label{P:nonzero}
Let $1 \le n \le d-1$ and let $1 + \max\{0,n-g\} \le r \le n$. Then
\begin{equation}
\label{cpi}
c_{\alpha_d}(r) = \begin{cases} -1 & \text{ if } d < \gon_r(C) \ \mbox{(or equivalently} \ C_d^r = \emptyset)\cr \dim C_d^r & \text{ if } d \ge \gon_r(C)  \ \mbox{(or equivalently} \ C_d^r \neq \emptyset). \cr \end{cases}
\end{equation}
Moreover $\AJ_n^r(C_d) = \{0\}$ whenever $1 + \max\{0,d-g\} \le r \le n$ and either $d < \gon_r(C)$ or $d \ge \gon_r(C)$ and $n > \dim C_d^r$.

Assume now that $d\geq \gon_r(C)$ (which then forces $\dim C_d^r\geq r$). Then the following hold:
\begin{enumerate}[(i)]

\item \label{BNextr1} $\AJ_n^r(C_d)$ is non-trivial if $n \le \dim C_d^r$.

\item \label{BNextr2} $\AJ_{\dim C_d^r}^r(C_d)$ is equal to $\AJ_{\dim C_d^r}^r(C_d)\cap \Eff_n(C_d)$ and it is the conic hull  of the irreducible components of $C_d^r$ of maximal dimension.
\end{enumerate} 
Furthermore, \eqref{BNextr1} holds for $\AJt_n^r(C_d)$ if $C_d^r$ has some tautological irreducible component of maximal dimension 
and \eqref{BNextr2}  holds for $\AJt_n^r(C_d)$  if all irreducible components of $C_d^r$  of maximal dimension are tautological. 
\end{prop}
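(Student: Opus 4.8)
The plan is to deduce Proposition \ref{P:nonzero} from the general extremality criterion of Proposition \ref{extr-crit} applied to $\pi=\alpha_d:C_d\to\Pic^d(C)$, together with the classification of subvarieties of $C_d$ of large contractibility index via Brill-Noether varieties. The first step is to identify the subvarieties $Z\subseteq C_d$ with $\contr_{\alpha_d}(Z)\geq r$. Since $\contr_{\alpha_d}(Z)=\dim Z-\dim\alpha_d(Z)$ for irreducible $Z$, a subvariety has contractibility index at least $r$ precisely when the generic fiber of $(\alpha_d)_{|Z}$ has dimension at least $r$; as the fiber of $\alpha_d$ over $L$ is the linear system $|L|\cong\PP^{h^0(C,L)-1}$, this forces $Z$ (generically, hence entirely, by closedness) into $C_d^r=\alpha_d^{-1}(W_d^r(C))$. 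Conversely, by Fact \ref{BNclass}\eqref{BNclass0}, each irreducible component of $C_d^r$ itself has contractibility index exactly $r$ (when $\max\{1,d-g+1\}\leq r$; in the remaining trivial range $r\leq d-g$ one has $C_d^r=C_d$ and the statement is immediate). This shows that an irreducible $Z$ with $\contr_{\alpha_d}(Z)\geq r$ exists if and only if $C_d^r\neq\emptyset$, which by Fact \ref{BNclass}\eqref{BNclass00} is equivalent to $d\geq\gon_r(C)$, and in that case the maximal dimension of such a $Z$ is exactly $\dim C_d^r$ — realized by any maximal-dimensional component of $C_d^r$. This is precisely the formula \eqref{cpi} for $c_{\alpha_d}(r)$, with the convention $c_{\alpha_d}(r)=-1$ when no such $Z$ exists.

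Next, for the vanishing statement: if $1+\max\{0,d-g\}\leq r\leq n$ and $c_{\alpha_d}(r)<n$ — i.e.\ either $C_d^r=\emptyset$ or $n>\dim C_d^r$ — then by the last sentence of the paragraph preceding Proposition \ref{extr-crit} (or directly from the definition: the range $r\geq 1+\max\{0,d-g\}$ ensures $C_d^r\neq C_d$, so $r>\max\{0,n-\dim\alpha_d(C_d)\}=\max\{0,n-g\}$, i.e.\ $F_n^{\geq r}(\alpha_d)\neq\Pseff_n(C_d)$), every nonzero pseudoeffective class in $F_n^{\geq r}(\alpha_d)$ would have to be supported — in the sense of the face structure — on subvarieties of dimension $n$ and contractibility index $\geq r$, of which there are none; more carefully, one invokes Remark \ref{cong}: it is worth noting this direction uses that we are claiming $\AJ_n^r(C_d)=\{0\}$, which actually requires knowing $F_n^{\geq r}(\alpha_d)$ contains no pseudoeffective class, not merely no effective one. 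I would handle this by the dimension count: any $\alpha\in F_n^{\geq r}(\alpha_d)$ satisfies $\alpha\cdot\alpha_d^*(h^{n+1-r})=0$ with $h$ ample, and pushing down one checks $\alpha_d{}_*\alpha=0$ and more, forcing $\alpha$ to be a limit of cycles supported on $C_d^r$, which has dimension $<n$ — so $\alpha=0$ by the salience arguments already used in the proof of Proposition \ref{extr-crit}. (In fact the cleanest route is just to quote Proposition \ref{extr-crit}: $F_n^{\geq r}(\alpha_d)$ with $n>c_{\alpha_d}(r)$ is handled there, or rather the whole point is that $\AJ_n^r$ is controlled by $c_{\alpha_d}(r)$.)

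Then \eqref{BNextr1} and \eqref{BNextr2} are read off directly from Proposition \ref{extr-crit}: assuming $d\geq\gon_r(C)$, so $C_d^r\neq\emptyset$ and $\dim C_d^r\geq r$ (by Fact \ref{BNclass}\eqref{BNclass1}, every component has dimension $\geq r+\rho\geq r$, or simply because a $\g^r_d$ gives a subordinate-type locus of dimension $\geq r$), we have $c_{\alpha_d}(r)=\dim C_d^r$. For $r\leq n\leq\dim C_d^r=c_{\alpha_d}(r)$, Proposition \ref{extr-crit}\eqref{extr-crit1} gives that $\AJ_n^r(C_d)=F_n^{\geq r}(\alpha_d)$ is non-trivial, proving \eqref{BNextr1}. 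For $n=\dim C_d^r=c_{\alpha_d}(r)$, Proposition \ref{extr-crit}\eqref{extr-crit2} gives that there are finitely many irreducible subvarieties of dimension $c_{\alpha_d}(r)$ with contractibility index $\geq r$ — and by the first paragraph these are exactly the maximal-dimensional irreducible components of $C_d^r$ — and that $F_{c_{\alpha_d}(r)}^{\geq r}(\alpha_d)$ is their conic hull and equals its intersection with $\Eff_n(C_d)$, proving \eqref{BNextr2}.

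Finally, the tautological refinements: since $\AJt_n^r(C_d)=\AJ_n^r(C_d)\cap\Psefft_n(C_d)$ and the classes $[Z_i]$ of the maximal components lie in $N_n(C_d)$, non-triviality of $\AJt_n^r$ under $n\leq\dim C_d^r$ needs some maximal component to have tautological class — I would argue that in dimension $n=\dim C_d^r$ the presence of one tautological maximal component $Z_i$ gives $0\neq[Z_i]\in\AJt_{\dim C_d^r}^r$, and then for $r\leq n<\dim C_d^r$ one slices $Z_i$ by general members of $|x|$ (the ample generator), which preserves tautologicality of the class and drops the dimension by one while keeping the contractibility index $\geq r$ — this produces a nonzero tautological class in $\AJt_n^r$ by the salience argument in the proof of Proposition \ref{extr-crit}. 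For the last clause, if \emph{all} maximal-dimensional components of $C_d^r$ are tautological, then the conic hull computed in \eqref{BNextr2} lies entirely in $\Psefft_n(C_d)$, so $\AJt_{\dim C_d^r}^r(C_d)=\AJ_{\dim C_d^r}^r(C_d)$ and the same equality with $\cap\Eff_n$ holds. The main obstacle is the tautological non-triviality statement in intermediate dimension: one must check that intersecting a maximal-dimensional Brill-Noether component with a general $x$-divisor keeps both the contractibility index $\geq r$ (clear from the generic $\PP^r$-fibration structure of Fact \ref{BNclass}\eqref{BNclass0}, provided $\dim\alpha_d(Z_i)\geq 1$ so that the slice still dominates a codimension-one subvariety of $W_d^r$; if $\alpha_d(Z_i)$ is a point the slice is a hypersurface in a projective space and the index only drops to $n-1\geq r$) and produces a class still recognized as tautological and nonzero in the face — this bookkeeping, and correctly invoking the appropriate part of Proposition \ref{extr-crit} or its proof, is where care is needed.
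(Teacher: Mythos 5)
Your overall strategy is the paper's: classify the irreducible subvarieties of contractibility index $\ge r$ as those contained in $C_d^r$, note that each irreducible component of $C_d^r$ has index $\ge r$ (handling $r\le \max\{0,d-g\}$ separately), deduce \eqref{cpi}, read off \eqref{BNextr1} and \eqref{BNextr2} from Proposition~\ref{extr-crit}, and then promote to the tautological cone. But the step you propose for the tautological non-triviality in intermediate dimensions fails. Slicing a tautological maximal-dimensional component $Z\subseteq C_d^r$ by general $x$-divisors does \emph{not} keep the contractibility index $\ge r$: since every fiber $|L|\cong\PP^r$ of $(\alpha_d)_{|Z}$ meets $X_p$ (because $h^0(L-p)\ge r\ge 1$), the slice $Z\cap X_p$ still dominates $\alpha_d(Z)$, so its generic fiber dimension is $r-1$, i.e. the index drops by one — contrary to your parenthetical claim. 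At the level of classes, with $n=\dim Z$ one has $\bigl([Z]\cdot x\bigr)\cdot\theta^{(n-1)+1-r}\cdot x^{r-1}=[Z]\cdot\theta^{n-r}\cdot x^{r}>0$ by Lemma~\ref{intSigma} (as $\dim\alpha_d(Z)=n-r$), so $[Z]\cdot x\notin \AJ_{n-1}^r(C_d)$. The correct move — and what the paper's appeal to the proof of Proposition~\ref{extr-crit} amounts to, made tautological — is to cut on the base, i.e. multiply by $\theta$ instead of $x$: the class $[Z]\cdot\theta^{\,j}$ is tautological, pseudoeffective ($\theta$ is nef), lies in the face because $[Z]\cdot\theta^{\,j}\cdot\theta^{\dim Z-j+1-r}=[Z]\cdot\theta^{\dim Z+1-r}=0$, and is nonzero precisely when $j\le\dim\alpha_d(Z)$, i.e. when $n=\dim Z-j\ge r$; this produces the required nonzero elements of $\AJt_n^r(C_d)$ for all $r\le n\le\dim C_d^r$.

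The ``Moreover'' vanishing statement is also not settled by what you write. The assertion that any $\alpha\in F_n^{\ge r}(\alpha_d)$ with $n>c_{\alpha_d}(r)$ is ``forced to be a limit of cycles supported on $C_d^r$'' is exactly the nontrivial point for \emph{pseudoeffective} (as opposed to effective) classes, and it cannot be checked by pushing forward; and Proposition~\ref{extr-crit} contains no triviality statement above $c_{\alpha_d}(r)$, so ``just quoting'' it does not work either. The hypothesis $r\ge 1+\max\{0,d-g\}=\dim C_d-\dim\alpha_d(C_d)+1$ is there precisely so that one can invoke the unconditional part of Remark~\ref{cong}, which rests on \cite[Thm. 4.13]{fl2}; that citation (which you mention only in passing before substituting the flawed sketches) is what the argument must rest on, and it is what the paper does.
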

\begin{proof}
We will apply Proposition \ref{extr-crit} to the Abel-Jacobi map $\alpha_d:C_d\to \Pic^d(C)$.

Observe that if there is an irreducible subvariety $Z \subseteq C_d$ of  contractibility index at least $r$ then $C_d^r \neq \emptyset$ and $Z \subseteq C_d^r$. Moreover we claim that each irreducible component of $C_d^r$ has contractibility index at least $r$. In fact, if $r > \max\{0,d-g\}$ the claim follows by Fact \ref{BNclass}\eqref{BNclass0} while if $r \le \max\{0,d-g\}$ then $C^r_d = C_d$ that has contractibility index $\max\{0,d-g\} \ge r$.

This proves \eqref{cpi} and, if $C_d^r \neq \emptyset$, that the subvarieties of dimension $c_{\alpha_d}(r)$ and contractibility index at least $r$ are exactly the irreducible components of $C_d^r$ of maximal dimension. Using these facts, the first part of the proposition follows from Remark \ref{cong} and Proposition \ref{extr-crit}. 

In order to prove the same properties for $\AJt_n^r(C_d)$, observe that the non-triviality of $\AJt_{\dim C_d^r}^r(C_d)$
and the analogue of \eqref{BNextr2} for $\AJt_{\dim C_d^r}^r(C_d)$, follow directly by our assumption. On the other hand, the non-triviality of $\AJt_n^r(C_d)$ for $n < \dim C_d^r$ follows from the proof of Proposition \ref{extr-crit} using that there is one tautological component of $C_d^r$ of dimension equal to $\dim C_d^r$.
\end{proof}

\begin{remark}\label{congAJ}
According to Remark \ref{cong}, we expect that, 
for any $1 + \max\{0,n-g\} \le r \le n$, $\AJ_n^r(C_d)=\{0\}$  if either $d<\gon_r(C)$ (which is equivalent to $C_d^r=\emptyset$) or $d\geq \gon_r(C)$ and $n>\dim C_d^r$. In case $r=1$, this would follow from the validity of the  weak conjecture 1.1 in \cite{fl2}  for the Abel-Jacobi morphism. Indeed, we know that $\alpha_d$ satisfies the 
above mentioned conjecture  if $d < \gon_1(C)$ (in which case it holds trivially) and  if $d \ge g$ and the (algebraically closed) base field is  uncountable,  by \cite[Thm. 1.2]{fl3}.
\end{remark}
As a corollary of the above proposition,  we can determine some ranges of $d$ and $n$ for which we can find non-trivial Abel-Jacobi faces in $\Pseff_n(C_d)$ or $\Psefft_n(C_d)$.

\begin{cor}\label{C:nontriv}
Let $1 \le n \le d-1$ and let $C$ be a curve of genus $g\geq 1$.
\begin{enumerate}[(i)]
\item \label{P:nontriv1} There exist non-trivial Abel-Jacobi faces of $\Pseff_n(C_d)$ if $d \ge \frac{n+g+1}{2}$. 
\item  \label{P:nontriv2} There exist non-trivial Abel-Jacobi faces of $\Psefft_n(C_d)$ if either $d \ge g+1$ or $d \ge \frac{n+g+1}{2}$ and $C_d^1$ has some tautological irreducible component of maximal dimension (which holds true  if $C$ is a Brill-Noether general curve).
\end{enumerate}
\end{cor}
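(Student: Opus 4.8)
The plan is to derive both statements directly from Proposition \ref{P:nonzero} by producing, in the stated ranges of $d$, a subvariety of $C_d$ of dimension at least $n$ and contractibility index at least $r$ for a suitable $r$. The natural choice is $r=1$: by Proposition \ref{P:nonzero}\eqref{BNextr1}, $\AJ_n^1(C_d)$ is non-trivial as soon as $d\geq \gon_1(C)=\gon(C)$ and $n\leq \dim C_d^1$, so everything reduces to estimating $\dim C_d^1$ from below. For this I would invoke Fact \ref{BNclass}\eqref{BNclass1}: whenever $0\leq d\leq 2g-2$ and $C_d^1\neq\emptyset$, every irreducible component of $C_d^1$ has dimension at least $1+\rho(g,1,d)=1+g-2(g-d+1)=2d-g-1$. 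Hence the inequality $n\leq 2d-g-1$, i.e. $d\geq\frac{n+g+1}{2}$, guarantees $n\leq\dim C_d^1$, and we also need $d\geq\gon(C)$; but $\frac{n+g+1}{2}\geq\frac{g+2}{2}$ for $n\geq 1$, and one checks that $\gon(C)\leq\frac{g+3}{2}\leq\frac{n+g+1}{2}$ when $n\geq 2$, while for $n=1$ the classical extremality of $\theta$ handles the boundary case — so the hypothesis $d\geq\frac{n+g+1}{2}$ already forces $C_d^1\neq\emptyset$. For the regime $d>2g-2$ (where $2d-g-1\geq d-1\geq n$) one uses instead that $C_d^1=C_d$ once $d\geq g+1>\gon_1(C)$, so the face $\AJ_n^1(C_d)$ is automatically non-trivial for all $n\leq d-1$; this also covers the case $d\geq g+1$ appearing in part \eqref{P:nontriv2}. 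This proves \eqref{P:nontriv1}.

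For the tautological statement \eqref{P:nontriv2}, the issue is that Proposition \ref{P:nonzero} only yields non-triviality of $\AJt_n^1(C_d)$ when $C_d^1$ has a tautological irreducible component of maximal dimension $\dim C_d^1$. In the range $d\geq g+1$ we have $C_d^1=C_d$, whose class $[C_d]$ is obviously tautological (it is $1\in R^0(C_d)$), so the component is tautological and we are done. In the range $d\geq\frac{n+g+1}{2}$ with $C_d^1$ having a tautological component of maximal dimension, the conclusion is immediate from the proposition. It remains to observe that the latter hypothesis holds for a Brill-Noether general curve: by Fact \ref{BNclass}\eqref{BNclass3}, when $\rho(g,1,d)>0$ the variety $C_d^1$ is irreducible of dimension $2d-g-1$, hence its unique component $[C_d^1]=c_d^1$ is visibly a tautological class (it is a polynomial in $x$ and $\theta$ by Fact \ref{BNclass}\eqref{BNclass2}), and when $\rho(g,1,d)=0$, i.e. $d=\frac{g+2}{2}$, the variety $C_d^1$ is a disjoint union of $\PP^1$'s whose total class is again $c_d^1$, tautological. (If $\rho(g,1,d)<0$ the hypothesis $d\geq\frac{n+g+1}{2}\geq\frac{g+3}{2}$ is not met, so this case does not arise.) Thus the tautological components of maximal dimension exist and \eqref{P:nontriv2} follows.

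The main obstacle, such as it is, is purely bookkeeping: carefully matching the numerical condition $d\geq\frac{n+g+1}{2}$ with the lower bound $2d-g-1$ on $\dim C_d^1$ and with the requirement $d\geq\gon(C)$ needed for $C_d^1\neq\emptyset$, across the two regimes $d\leq 2g-2$ and $d>2g-2$, and checking that the boundary case $d=\frac{n+g+1}{2}$ (which forces $\rho(g,1,d)\in\{0,1,\dots\}$ according to the parity of $n+g$) is genuinely covered — in particular noticing that $n\leq 2d-g-1$ is exactly the non-triviality threshold from Proposition \ref{P:nonzero}\eqref{BNextr1} applied with $r=1$. There is no hard geometric input beyond Fact \ref{BNclass}; all the substance has been packaged into Proposition \ref{extr-crit} and Proposition \ref{P:nonzero}. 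One subtlety worth flagging is that we never need $C$ to be Brill-Noether general for part \eqref{P:nontriv1}: the dimension bound in Fact \ref{BNclass}\eqref{BNclass1} is valid for \emph{every} curve, and that is precisely why the non-tautological statement is unconditional while the tautological one requires either $d\geq g+1$ or Brill-Noether generality.
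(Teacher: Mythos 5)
Your reduction to Proposition \ref{P:nonzero} with $r=1$ only covers the range $n\le g$, and this is a genuine gap. Both Definition \ref{D:AJfaces} and Proposition \ref{P:nonzero} require $1+\max\{0,n-g\}\le r\le n$; when $n>g$ the choice $r=1$ is out of range, and in fact $F_n^{\ge 1}(\alpha_d)=\Pseff_n(C_d)$ is the \emph{whole} cone (every $n$-dimensional class has contractibility index at least $n-g\ge 1$), so it is not a non-trivial face and Proposition \ref{P:nonzero}\eqref{BNextr1} cannot be invoked for it. The case $n>g$ really occurs in both parts of the corollary (e.g.\ any $g<n\le d-1$ with $d\ge g+2$ satisfies $d\ge \frac{n+g+1}{2}$), so your assertion that ``$\AJ_n^1(C_d)$ is automatically non-trivial for all $n\le d-1$'' once $d\ge g+1$ is not correct as stated. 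The paper handles this case separately: for $g\le n$ one takes $r=n-g+1$ (the smallest admissible value) and notes that $C_d^{n-g+1}=C_d$ by Riemann--Roch (using $d\ge n+1$), so that $\AJ_n^{n-g+1}(C_d)$ and $\AJt_n^{n-g+1}(C_d)$ are non-trivial by Proposition \ref{P:nonzero}\eqref{BNextr1}. Adding this third case would close the gap; for $n\le g$ your argument is essentially the paper's (namely $C_d^1=C_d$ when $n\le g\le d-1$, and, when $d\le g$, the bound $\dim C_d^1\ge 2d-g-1$ combined with $d\ge \gamma(1)\ge \gon(C)$ from Fact \ref{BNclass}\eqref{BNclass00}).

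Two smaller points. The aside that ``the classical extremality of $\theta$ handles the boundary case'' for $n=1$ is unnecessary and not really an argument: integrality of $d$ already gives $d\ge \lceil \frac{g+2}{2}\rceil=\gamma(1)\ge \gon(C)$, which is all that is needed for $C_d^1\neq\emptyset$. Also, for the Brill--Noether general case with $\rho(g,1,d)=0$, what Proposition \ref{P:nonzero} requires is a single \emph{tautological irreducible component} of maximal dimension, not that the total class $c_d^1$ is tautological; here the components are fibres of $\alpha_d$, i.e.\ subordinate varieties $\Gamma_d(\g^1_d)$, whose classes are tautological by Fact \ref{subcyc} (this is the same observation used in the proof of Theorem \ref{AJtaut}).
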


Note that the inequality $d\geq \frac{n+g+1}{2}$ automatically holds if either $n \ge g-1$ or $d-n\ge \frac{g}{2}$. 
From the proof of the corollary, it will  follow that the lower bound $d\geq \frac{n+g+1}{2}$ is sharp for Brill-Noether general curves provided that the expectation of Remark \ref{congAJ} holds true. 
On the other hand, for special curves, the lower bound is far from being sharp, see Theorem \ref{T:hyper} for the case of hyperelliptic curves.
 
\begin{proof}
We will distinguish three cases.

\begin{itemize}
\item If $g\leq n$ (which implies that $g+1\leq d$) then $C_d^{n-g+1}=C_d$ by Riemann-Roch, and hence Proposition \ref{P:nonzero}\eqref{BNextr1} implies that $\AJ_n^{n-g+1}(C_d)$ and $\AJt_n^{n-g+1}(C_d)$ are non-trivial.

\item If $n\leq g\leq d-1$ then $C_d^1=C_d$ by Riemann-Roch, and hence Proposition \ref{P:nonzero}\eqref{BNextr1} implies that $\AJ_n^{1}(C_d)$ and $\AJt_n^{1}(C_d)$ are non-trivial.

\item If $d\leq g$ (which implies that $n\leq g-1$) then Fact \ref{BNclass}\eqref{BNclass1} gives that $\dim C_d^1\geq 2d-g-1$
if $C_d^1 \neq \emptyset$. Hence, if $n\leq 2d-g-1$ then Proposition \ref{P:nonzero}\eqref{BNextr1} 
and Fact \ref{BNclass}\eqref{BNclass00} imply that $\AJ_n^1(C_d)$ is non-trivial and, furthermore, that $\AJt_n^1(C_d)$ is non-trivial provided that  $C_d^1$ has some tautological irreducible component of maximal dimension.
\end{itemize}
\end{proof}

\begin{remark}\label{R:Kummer}
One may wonder if one could get more faces of the pseudoeffective cone of $C_d$ by looking at contractibility faces of some other  regular morphism  $f\colon C_d\to Z$ to some projective variety. There is no loss of generality (using the Stein factorization) in assuming that $f$ is a regular fibration, 
i.e. $f_*(\O_{C_d})=\O_Z$. 
Any such regular fibration  is uniquely determined (up to isomorphism) by $f^*(\Amp(Z))$ which is a face of the semiample cone of $C_d$. 

The intersection of the semiample cone with $R^1(C_d)$ is a subcone of the two dimensional cone $\Neft^1(C_d)$ which has two extremal rays: one is spanned by $\eta_{1,d}=dgx-\theta$ which is the dual of the class of the small diagonal $\Delta_{(d)}$ (see \cite[Cor. 3.15]{BKLV})  and the other one is generated by $\theta$ provided that $d\geq \gon(C)$ (see Theorem \ref{thetaNef}). The 
Abel-Jacobi morphism $\alpha_d:C_d\twoheadrightarrow \alpha_d(C_d)\subseteq \Pic^d(C)$ corresponds to the face $\cone(\theta)$ while the other face $\cone(\eta_{1,n})$ corresponds to another fibration that we are going to describe.

Consider the regular morphism  (as in \cite[\S 2.2]{Pac})
$$
\begin{aligned}
\phi_d: C^d & \longrightarrow J(C)^{{d \choose 2}}\\
 (p_1,\ldots, p_d) & \mapsto (\O_C(p_i-p_j))_{1 \le i < j \le d}.
 \end{aligned}
 $$
By quotienting $C^d$ by the symmetric group $S_d$ and $J(C)^{{d \choose 2}}$ by the semi-direct product $\Z/2\Z^{{d \choose 2}}\rtimes S_{{d \choose 2}}$ (where $S_{{d \choose 2}}$ acts by permutation and each copy of $\Z/2\Z$ acts on the corresponding factor $J(C)$ as the inverse), we get a regular 
fibration 
\begin{equation}\label{E:Kum}
\varphi_d : C_d \twoheadrightarrow \varphi_d(C)\subset \Sym^{{d \choose 2}}(\Kum(C)).
\end{equation}
It is easily checked that  the only subvariety contracted by $\varphi_d$ is $\Delta_{(d)}$. We then have $c_{\varphi_d}(r) = -1$ if $r \ge 2$ and $c_{\varphi_d}(1) = 1$. By Proposition \ref{extr-crit}(ii) we get that $\cone(\Delta_{(d)})$ is an extremal ray of $\Pseff_1(C_d)$ (which improves \cite[Lemma 2.2]{Pac} where the author uses the above maps to show that the class of the small diagonal $\Delta_{(d)}$ lies in the boundary of $\Pseff_1(C_d)$). In fact we know more, namely that $\cone(\Delta_{(d)})$ is an edge of $\Pseff_1(C_d)$ by \cite[Cor. 3.15(d)]{BKLV}. According to Remark \ref{cong}, we also expect that $F_k^{\ge r}(\varphi_d) = \{0\}$ for $k \ge 2$ or $k=1$ and $r \ge 2$. Hence, we do not expect to find new interesting faces by looking at the contractibility faces of $\phi_d$, apart from a new (and simpler) proof of the fact that $\Delta_{(d)}$ spans an extremal ray of $\Psefft_1(C_d)$. 
\end{remark}

\subsection{Brill-Noether rays}\label{SS:BNrays}

In this subsection, we use Proposition \ref{P:nonzero} to exhibit some extremal rays of $\Pseff_n(C_d)$ (and of $\Psefft_n(C_d)$) for a 
Brill-Noether \color{black}general curve.

\begin{thm}\label{AJtaut}
Let $\max\{1,d-g+1\}\leq  r$ and $\displaystyle  \frac{rg}{r+1}+r\leq d\leq 2g-2$.  
Assume that $C$ is a Brill-Noether general curve.

Then $\AJ^r_{r+\rho}(C_d)=\AJt^r_{r+\rho}(C_d)=\cone([C_d^r])$,  where $\rho:=\rho(g,r,d)=g-(r+1)(g-d+r)$. In particular, $[C_d^r]$ generates an extremal ray (called the \emph{BN(=Brill-Noether) ray}) of $\Pseff_{r+\rho}(C_d)$ and of $\Psefft_{r+\rho}(C_d)$. 
\end{thm}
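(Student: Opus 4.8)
The plan is to verify the hypotheses of Proposition \ref{P:nonzero} in the case $n = \dim C_d^r$, and then identify that dimension with $r + \rho$ using Brill-Noether theory. First I would note that since $C$ is Brill-Noether general and $\rho = \rho(g,r,d) \geq 0$ (which follows from $d \geq \frac{rg}{r+1} + r$ together with $\max\{1, d-g+1\} \leq r$), Fact \ref{BNclass}\eqref{BNclass3} applies: either $C_d^r$ is irreducible of dimension $r + \rho$ (if $\rho > 0$) or it is a disjoint union of projective spaces of dimension $r$ (if $\rho = 0$, in which case $r + \rho = r$ as well). In both cases $C_d^r$ is non-empty of pure dimension $r + \rho$, so $d \geq \gon_r(C)$ and $c_{\alpha_d}(r) = \dim C_d^r = r + \rho$ by \eqref{cpi}. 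I should also check that the numerical hypotheses of Proposition \ref{P:nonzero} and Definition \ref{D:AJfaces} are met, i.e.\ that $1 + \max\{0, (r+\rho) - g\} \leq r \leq r + \rho \leq d - 1$; the middle inequality is clear, the right-hand one since $\rho \leq \dim W_d^r \leq g-1$ forces $r+\rho \leq d-1$ (as $r \leq d$ and not both extremes are attained), and the left-hand one because $\rho \leq g - (r+1)(g-d+r)$ combined with $d - g + r \geq 0$ gives $r + \rho \leq g$ hence $\max\{0, r+\rho-g\} = 0 \leq r - 1$.

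Next, applying Proposition \ref{P:nonzero}\eqref{BNextr2} with $n = \dim C_d^r = r + \rho$ gives directly that
$$\AJ_{r+\rho}^r(C_d) = \AJ_{r+\rho}^r(C_d) \cap \Eff_{r+\rho}(C_d) = \text{the conic hull of the irreducible components of } C_d^r \text{ of maximal dimension.}$$
Since $C_d^r$ has pure dimension $r + \rho$, every irreducible component is of maximal dimension. When $\rho > 0$ the variety $C_d^r$ is irreducible, so its conic hull is simply $\cone([C_d^r])$, a single ray. When $\rho = 0$, $C_d^r$ is a disjoint union of projective spaces all algebraically equivalent (they are all fibers of a family, or more directly one invokes that translation on $\Pic^d(C)$ identifies their classes, or uses Fact \ref{BNclass}\eqref{BNclass2}), hence numerically equivalent, so again the conic hull is the single ray $\cone([C_d^r])$; note $[C_d^r] = c_d^r$ here records the class of the whole $0$-dimensional-component-count-weighted cycle, but the extremal ray is still one-dimensional. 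In either case $\AJ_{r+\rho}^r(C_d) = \cone([C_d^r])$.

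For the tautological statement, I would use the last sentence of Proposition \ref{P:nonzero}: the analogue of \eqref{BNextr2} holds for $\AJt_n^r(C_d)$ provided all irreducible components of $C_d^r$ of maximal dimension are tautological. Since $C$ is Brill-Noether general and $C_d^r$ has pure dimension equal to the expected dimension $r + \rho$, Fact \ref{BNclass}\eqref{BNclass2} applies and gives $[C_d^r] = c_d^r \in R^*(C_d)$, i.e.\ the class of $C_d^r$ (and, when $\rho = 0$, of each of its components up to the combinatorial multiplicity) is tautological. Hence $\AJt_{r+\rho}^r(C_d) = \cone([C_d^r]) = \AJ_{r+\rho}^r(C_d)$, and since a non-trivial face that equals a single ray is by definition an extremal ray, $[C_d^r]$ spans an extremal ray of both $\Pseff_{r+\rho}(C_d)$ and $\Psefft_{r+\rho}(C_d)$.

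\textbf{Main obstacle.} The genuinely delicate point is not the dimension bookkeeping but handling the case $\rho = 0$ cleanly: there $C_d^r$ is disconnected, so "the conic hull of the components of maximal dimension" is a priori a polyhedral cone with several generators, and one must argue that all these generators are numerically proportional in order to conclude it is a single ray. I expect this to follow from Fact \ref{BNclass}\eqref{BNclass2} (which computes the class of the \emph{union}, forcing all components to have the same numerical class since they are permuted transitively by the relevant symmetries / appear symmetrically), but one should state this carefully. A secondary subtlety is making sure the tautological hypothesis "all maximal components are tautological" in Proposition \ref{P:nonzero} is actually verified rather than assumed — again this is exactly the content of Fact \ref{BNclass}\eqref{BNclass2}, which requires the purity of dimension that Brill-Noether generality supplies.
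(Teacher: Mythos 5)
Your overall strategy is exactly the paper's: reduce everything to Proposition \ref{P:nonzero}\eqref{BNextr2} and its tautological analogue, use Brill--Noether generality (Fact \ref{BNclass}\eqref{BNclass3}) to see that $C_d^r$ is non-empty of pure dimension $r+\rho$, and then split according to the sign of $\rho$. The case $\rho>0$ is handled correctly and as in the paper. The numerical bookkeeping (equivalence of $d\geq \frac{rg}{r+1}+r$ with $\rho\geq 0$, resp.\ with $d\geq \gon_r(C)$ via Lemma \ref{L:gonindex}, and $1+\max\{0,r+\rho-g\}\leq r\leq r+\rho$) is also fine, modulo the minor slip that one should use $g-d+r\geq 1$ (which follows from $r\geq \max\{1,d-g+1\}$), not merely $g-d+r\geq 0$, to get $r+\rho\leq g-1$.

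However, in the case $\rho=0$ there is a genuine gap, precisely at the point you flag as the main obstacle, and the justification you propose does not close it. Fact \ref{BNclass}\eqref{BNclass2} only computes the class of the whole cycle $C_d^r$; knowing the class of a union of components does not force the components to be numerically equivalent, and there is no symmetry of $C_d$ permuting them transitively (in particular, translations of $\Pic^d(C)$ do not act on $C_d$, so they cannot be used to identify the classes of the fibers). Likewise, the tautological hypothesis in the last sentence of Proposition \ref{P:nonzero} requires each maximal-dimensional component to have tautological class, which again does not follow from the tautologicality of the total class $c_d^r$. The paper's resolution is different and is the missing ingredient: when $\rho=0$, Brill--Noether generality gives $C_d^{r+1}=\emptyset$, so every $\g_d^r$ is a complete linear system $|L|$ with $L\in W_d^r(C)$, and each irreducible component of $C_d^r$ is a fiber $\alpha_d^{-1}(L)=\Gamma_d(|L|)$, i.e.\ a subordinate variety; Fact \ref{subcyc} then computes $[\Gamma_d(|L|)]$ by a formula that is tautological and independent of $L$, so all components are numerically equivalent with tautological class, and $[C_d^r]$ is a positive multiple of each of them. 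With that substitution for your ``symmetry/Fact \ref{BNclass}\eqref{BNclass2}'' step, the argument is complete and coincides with the paper's proof.
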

 
Note that if $r=1$ and $\frac{g+2}{2}\leq d \leq g$ then $[C_d^1]$ generates an extremal ray of $\Psefft_{2d-g-1}(C_d)$, and this achieves the lower bound on $d$ in Corollary \ref{C:nontriv}.  

\begin{proof}
This will follow from  Proposition \ref{P:nonzero}\eqref{BNextr2} and its analogue  for the tautological Abel-Jacobi faces, provided that we  show that either $C_d^r$ is  tautological and irreducible of dimension $r+\rho$ or all the irreducible components of $C_d^r$ are tautological, of  dimension $r+\rho$ and numerically equivalent (in which case the class of $C_d^ r$ is a positive multiple of the class of each of its irreducible components).  

The hypothesis $ \frac{rg}{r+1}+r\leq d$ is equivalent to $\gon_r(C)\leq d$ by Lemma \ref{L:gonindex} (which is in turn equivalent to $C_d^r\neq \emptyset$) and it implies that $C_d^r$ has pure dimension $r+\rho$ by Fact \ref{BNclass}\eqref{BNclass3} and it has tautological class by Fact \ref{BNclass}\eqref{BNclass2}. We now distinguish two cases, according to the sign of $\rho$. If $\rho>0$ then $C_d^r$ is irreducible by Fact \ref{BNclass}\eqref{BNclass3} and we are done. 
If instead $\rho=0$, then $C_d^r$ is a disjoint union of $r$-dimensional fibers of the map $\alpha_d$  by Fact \ref{BNclass}\eqref{BNclass3}.  We conclude by observing that all the $r$-dimensional fibers of $\alpha_d$ are numerically equivalent  and they have tautological class  (indeed, their class is equal to $\Gamma_d(\g^r_d)$, see Fact \ref{subcyc}). 
\end{proof}

\begin{example}\label{BNexa}
Two special cases of BN rays of fixed codimension $m$ (which are also the unique ones in codimension $m$ if $m$ is a prime)  are the ones generated by the following Brill-Noether varieties:
\begin{enumerate}[(i)]
\item \label{BNexa1} If $1\leq m\leq g/2$ and $C$ is a 
Brill-Noether general curve, then $C_{g-m+1}^1$ is a pure codimension $m$ (and irreducible if and only if $m<g/2$ or $g=2$) subvariety of $C_{g-m+1}$ of class
$$[C_{g-m+1}^1]=\frac{\theta^m}{m!}-\frac{x\theta^{m-1}}{(m-1)!}.$$
\item \label{BNexa2} If $1\leq m\leq g-1$ (and $C$ is any curve) then $C_{g+m-1}^m$ is a codimension $m$ irreducible subvariety of $C_{g+m-1}$ of class (see Example \ref{BN=subor}) 
$$[C_{g+m-1}^m]=\sum_{\alpha=0}^m (-1)^{\alpha} \frac{x^{\alpha}\theta^{m-\alpha}}{(m-\alpha)!}.$$
\end{enumerate}

If $m=1$ in each of the above special cases, we get that $[C_{g}^1]=\theta-x\in N^1(C_g)$ generates an extremal ray of $\Pseff^1(C_{g})$, thus extending \cite[Rmk 1 after Thm.\ 5]{Kou} from very general curves to  arbitrary curves. 

\end{example}

It is natural to ask if  BN rays are perfect, i.e. if they are edges, in the entire or tautological pseudoeffective. As we will see, a way to prove this for the tautological pseudoeffective cone would be to apply Proposition \ref{P:AJtheta}\eqref{P:AJtheta2}. On the other hand we will show in Remark \ref{R:BNperf} that the unique BN rays $\cone([C_d^r])$ to which we can apply Proposition \ref{P:AJtheta}\eqref{P:AJtheta2}, and hence deduce that they are perfect rays, are those with $\rho=\rho(g,r,d)=0$ (when we will actually see in Remark \ref{R:compaAJ} that they coincide with the subordinate edge) and those with $d=g+r-1$ (when we will actually see in Theorem \ref{T:BNfaces} that they coincide with the  BN edge in dimension $g-1$).

\subsection{The $\theta$-filtration}\label{SS:theta}

The tautological Abel-Jacobi faces can be described in terms of a multiplicative filtration of the tautological ring $R^*(C_d)$, determined by the class $\theta$. 

\begin{defi}\label{thetalin}[The $\theta$-filtration]
For any  $0\leq m \leq d$ and any $0\leq i \leq g+1$,  let $\theta^{\geq i, m}$ (or simply $\theta^{\geq i}$ if $m$ is clear from the context) be the smallest linear subspace of $R^m(C_d)=R_{d-m}(C_d)$ 
containing the monomials $\{\theta^i x^{m-i}, \theta^{i+1}x^{m-i-1}, \ldots, \theta^{m} \}$, 
with the obvious convention that $\theta^{\geq i, m}=\{0\}$ if $i>m$. 
\end{defi}

The subspaces $\{\theta^{\geq i, m}\}$ form an exhaustive  decreasing multiplicative filtration of the tautological ring $R^*(C_d)$, in the sense that 
$$\{0\}=\theta^{\geq g+1, m}\subseteq \cdots \subseteq \theta^{\geq i+1, m}\subseteq \theta^{\geq i, m}\subseteq \cdots \subseteq \theta^{\geq 0,m}=R^m(C_d)  \hspace{0.3cm} \text{ and } \hspace{0.3cm} \theta^{\geq i, m}\cdot \theta^{\geq j, l}\subseteq \theta^{\geq i+j, m+l}.$$
The properties of the $\theta$-filtration are collected in the following result.

\begin{prop}\label{P:theta}
Let $0\leq m \leq d$ and $0\leq i \leq g+1$. Set as usual $r(m):=\min\{m, d-m, g\}$. Then the following properties hold true.
\begin{enumerate}[(i)]
\item \label{P:theta1}
If $i\leq m+1$ 
then the codimension of $\theta^{\geq i, m}$ inside $R^m(C_d)$ is equal to
$$
\codim \theta^{\geq i, m}=
\begin{cases}
i & \text{ if } r(m)=m \: \text{ or } g, \\
\max\{i-g+d-m,0\} & \text{ if } r(m)=d-m\leq g\leq m,\\
\max\{i-2m+d,0\} & \text{ if } r(m)=d-m\leq m\leq g.
\end{cases}
$$
Moreover, a basis of $\theta^{\geq i, m}$ is given by 
$$\begin{sis}
 \{\theta^ix^{m-i}, \ldots, \theta^mx^0\} \quad & \text{ if } r(m)=m \:\: \text{ and } \: 0\leq i \leq m+1 , \\
 \{\theta^ix^{m-i}, \ldots, \theta^g x^{m-g}\} \quad & \text{ if } r(m)=g \: \:\text{ and } \: 0\leq i \leq g+1, \\
  \{\theta^ix^{m-i}, \ldots, \theta^g x^{m-g}\} \quad & \text{ if } r(m)=d-m\leq g\leq m \:\: \text{ and } \: g-(d-m)\leq i \leq g+1, \\
	\{\theta^ix^{m-i}, \ldots, \theta^mx^0\} \quad & \text{ if } r(m)=d-m\leq m\leq g \: \:\text{ and }  \: 2m-d\leq i \leq m+1. \\
\end{sis}$$
\item \label{P:theta2} 
Under the perfect pairing between $R^m(C_d)$ and $R^{d-m}(C_d)$ given by the intersection product (see Proposition \ref{basetaut}\eqref{basetaut5}), we have that 
$$(\theta^{\geq i, m})^{\perp}  \supseteq\theta^{\geq g+1-i, d-m},$$
with equality if and only if one the following assumptions hold:
\begin{itemize}
\item $g\leq \max\{m,d-m\}$, 
\item $i=g+1$ or $m\leq d-m\leq g$ and $g-(d-m)+m+1\leq i \leq g+1$, in which case the left and right hand side are both equal to $R^{d-m}(C_d)$,
\item $i=0$ or $d-m\leq m\leq g$ and $0 \leq i\leq 2m-d$, in which case the left and right hand side are both equal to zero.
\end{itemize}
\end{enumerate}
\end{prop}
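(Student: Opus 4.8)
The plan is to treat parts \eqref{P:theta1} and \eqref{P:theta2} in sequence, using the explicit description of $R^m(C_d)$ and of the intersection pairing from Proposition \ref{basetaut}. For part \eqref{P:theta1}, the key observation is that the monomials $\{\theta^i x^{m-i}, \ldots, \theta^m x^0\}$ all live in $R^m(C_d)$, and we want to count how many of them are linearly independent (equivalently, determine the dimension of their span). First I would recall that, by Proposition \ref{basetaut}\eqref{basetaut4}, any $r(m)+1$ of the monomials $\{x^m, x^{m-1}\theta, \ldots, x^{m-\min\{m,g\}}\theta^{\min\{m,g\}}\}$ form a basis, so the relations among the $\theta^j x^{m-j}$ come entirely from the two sources: $\theta^{g+1}=0$ (killing the monomials with $j>g$) and, when $d-m<\min\{m,g\}$, the relations in top degree forcing monomials with too many $x$'s (i.e. exponent of $\theta$ smaller than $2m-d$ when $m\le g$, or smaller than $g-(d-m)$ when $m\ge g$) to be expressible in terms of the others. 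Concretely: the monomials $\theta^j x^{m-j}$ with $\max\{2m-d,\, 0\}\le j\le \min\{m,g\}$ (adjusting the lower cutoff in the case $r(m)=d-m\le g\le m$ to $g-(d-m)$) are exactly a standard-type basis. Intersecting this range with $j\ge i$ gives the claimed bases in the four cases, and counting how many of the $m-i+1$ monomials $\theta^i x^{m-i},\ldots,\theta^m$ survive yields the codimension formula after an elementary case check on where $i$ sits relative to the cutoffs. The non-obvious point is verifying that in the degenerate cases ($r(m)=d-m$) the listed monomials really are independent; this follows because a subset of $r(m)+1$ monomials of the indicated form is a basis by Proposition \ref{basetaut}\eqref{basetaut4}, and the listed set is precisely such a subset (or an initial segment of one) intersected with the range $j\ge i$.

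For part \eqref{P:theta2}, the inclusion $(\theta^{\geq i,m})^\perp \supseteq \theta^{\geq g+1-i,d-m}$ is the multiplicativity statement together with $\theta^{g+1}=0$: if $\beta=\theta^a x^{m-a}$ with $a\ge i$ and $\gamma=\theta^b x^{(d-m)-b}$ with $b\ge g+1-i$, then $\beta\cdot\gamma$ is a scalar multiple of $\theta^{a+b}x^{d-a-b}$ with $a+b\ge g+1$, hence zero by Proposition \ref{basetaut}\eqref{basetaut1}; so the whole subspace $\theta^{\geq g+1-i,d-m}$ pairs to zero against $\theta^{\geq i,m}$. To decide when equality holds I would compare dimensions. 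Since the pairing $R^m(C_d)\times R^{d-m}(C_d)\to\R$ is perfect (Proposition \ref{basetaut}\eqref{basetaut5}), we have $\dim(\theta^{\geq i,m})^\perp = \dim R^{d-m}(C_d) - \dim\theta^{\geq i,m} = (r(d-m)+1) - \operatorname{codim}\theta^{\geq i,m} \cdot(\text{relative to }R^m)$, more precisely $\dim(\theta^{\geq i,m})^\perp = r(m)+1 - \dim\theta^{\geq i,m}$ — wait, one must be careful: $\dim(\theta^{\geq i,m})^\perp$ as a subspace of $R^{d-m}(C_d)$ equals $\dim R^{d-m}(C_d) - \dim\theta^{\geq i,m} = \operatorname{codim}_{R^m}\theta^{\geq i,m}$ using $\dim R^m = \dim R^{d-m}$. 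So equality $(\theta^{\geq i,m})^\perp = \theta^{\geq g+1-i,d-m}$ holds if and only if $\operatorname{codim}_{R^m}\theta^{\geq i,m} = \dim\theta^{\geq g+1-i,d-m} = (r(d-m)+1) - \operatorname{codim}_{R^{d-m}}\theta^{\geq g+1-i,d-m}$. Plugging in the formulas from part \eqref{P:theta1} (applied both to $(i,m)$ and to $(g+1-i,d-m)$, noting $r(m)=r(d-m)$) reduces the equality to a numerical identity, and I would run through the cases: when $g\le\max\{m,d-m\}$ one checks $r(m)\in\{m,d-m\}$ falls in the "$i$" or "$\max\{i-g+d-m,0\}$" branches and the identity holds for all $i$; the two bullet extremes ($i$ near $g+1$ with $m\le d-m\le g$, or $i$ near $0$ with $d-m\le m\le g$) are exactly the degenerate ranges where both sides collapse to $R^{d-m}$ or to $\{0\}$; and outside these ranges the codimension formulas give a strict gap, so the inclusion is proper.

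The main obstacle I anticipate is the bookkeeping in the case $r(m)=d-m<\min\{m,g\}$, i.e. when $C_d^{\,\cdot}$-type relations in top degree genuinely cut down the span: here one has to be careful about whether the lower cutoff is $2m-d$ (when $m\le g$) or $g-(d-m)$ (when $m\ge g$), and the $\max\{\cdot,0\}$ truncations interact with the constraint $i\le m+1$ in a way that produces several subcases. The cleanest route is probably to first record, once and for all, the statement: for $0\le a\le b\le m$ (with $b\le\min\{m,g\}$ and $a\ge\max\{2m-d,0\}$ or the analogous bound), the monomials $\{\theta^a x^{m-a},\ldots,\theta^b x^{m-b}\}$ are linearly independent in $R^m(C_d)$ iff they form part of a standard basis, which is automatic from Proposition \ref{basetaut}\eqref{basetaut4}; everything else is then an elementary, if slightly tedious, comparison of integers. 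The inclusion in part \eqref{P:theta2} itself is essentially immediate from $\theta^{g+1}=0$, so no real difficulty there; all the work is in the dimension count.
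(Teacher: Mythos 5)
Your proposal is correct and follows essentially the same route as the paper: part (i) is deduced from Proposition \ref{basetaut}\eqref{basetaut4} (any $r(m)+1$ of the standard monomials form a basis, hence the listed tail segments are independent and the codimension count is an elementary case check), and for part (ii) the inclusion comes from $\theta^{g+1}=0$ while the equality cases are settled by comparing $\dim(\theta^{\geq i,m})^{\perp}=\codim\theta^{\geq i,m}$ with $\dim\theta^{\geq g+1-i,d-m}$ via part (i), exactly as in the paper (which leaves that numerical comparison to the reader).
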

\begin{proof}
Part \eqref{P:theta1} is obvious if either $r(m)=m$ or $r(m)=g$, since in the former case the elements $\{\theta^0x^m, \ldots, 
\theta^m x^0\}$ form a basis of $R^m(C_d)$ while in the latter case the elements $\{\theta^0x^m, \ldots, \theta^gx^{m-g}\}$ form a basis of $R^m(C_d)$ by Proposition \ref{basetaut}\eqref{basetaut4}. On the other hand, if $r(m)=d-m$ then any subset of $(d-m+1)$ elements of $\{\theta^{0}x^{m}, \ldots, \theta^{\min\{g, m\}}x^{m-\min\{g,m\}}\}$ form a basis of $R^m(C_d)$ by Proposition \ref{basetaut}\eqref{basetaut4}. This easily imply \eqref{P:theta1} for $r(m)=d-m$. 

Part \eqref{P:theta2}: the inclusion  
\begin{equation*}
(\theta^{\geq i, m})^{\perp} \supseteq \theta^{\geq g+1-i, d-m}
\end{equation*} 
follows from the relation $\theta^{g+1}=0$. We conclude with a straightforward comparison (left to the reader) of the 
codimensions of $(\theta^{\geq i, m})^{\perp}$ and of $\theta^{\geq g+1-i, d-m}$, using  \eqref{P:theta1}.
\end{proof}



The link between tautological Abel-Jacobi faces and the $\theta$-filtration is clarified in the following  

\begin{prop}\label{P:AJtheta}
Let $0\leq n \leq d$ and  $1+\max\{0,n-g\}\leq r\leq n$. 
\begin{enumerate}[(i)]
\item \label{P:AJtheta1} We have an equality of subcones of $\Psefft_n(C_d)$
\begin{equation}\label{E:AJtheta}
\AJt_n^r(C_d)= (\theta^{\geq n+1-r,n})^\perp \cap \Psefft_n(C_d).
\end{equation}
In particular, $\dim \AJt_n^r(C_d)\leq \dim (\theta^{\geq n+1-r,n})^\perp=\codim (\theta^{\geq n+1-r,n})$.
\item \label{P:AJtheta2} 
If $\dim \AJt_n^r(C_d)=\dim (\theta^{\geq n+1-r,n})^\perp$ then $\AJt_n^r(C_d)$ is a perfect face of $\Psefft_n(C_d)$ whose (perfect) dual face is  $\theta^{\geq n+1-r,n} \cap \Neft^n(C_d)$.
\end{enumerate}
\end{prop}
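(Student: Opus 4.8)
The plan is to prove Proposition \ref{P:AJtheta} in two stages, corresponding to its two parts, and the main input will be an interpretation of the contractibility condition defining $\AJt_n^r(C_d)$ as orthogonality against a suitable power of $\theta$.

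First I would establish \eqref{P:AJtheta1}. Recall that $\AJt_n^r(C_d) = F_n^{\geq r}(\alpha_d)\cap \Psefft_n(C_d)$, and that by the observation in the proof of Proposition \ref{extr-crit}, a class $\alpha\in \Pseff_n(C_d)$ lies in $F_n^{\geq r}(\alpha_d)$ precisely when $\alpha\cdot \alpha_d^*(h^{n+1-r}) = 0$ for an ample class $h$ on $\Pic^d(C)$. Since $\Theta$ is ample on $\Pic^d(C)$ and $\theta = \alpha_d^*([\Theta])$, and since $\alpha\cdot\theta^{j}$ for a tautological class $\alpha$ is again tautological, the condition $\alpha\cdot\theta^{n+1-r}=0$ is what defines membership in $\AJt_n^r(C_d)$; one checks the power is right because $\contr_{\alpha_d}(\alpha)\geq r$ iff $\alpha\cdot\theta^{(n+1)-r}=0$ (here $k=n$ in Definition \ref{contract}). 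Now for a tautological class $\alpha\in R_n(C_d) = R^{d-n}(C_d)$, multiplication by $\theta^{n+1-r}$ lands in $R^{d-r+1}(C_d) = R_{r-1}(C_d)$, and by the non-degeneracy of the intersection pairing (Proposition \ref{basetaut}\eqref{basetaut5}) the vanishing $\alpha\cdot\theta^{n+1-r}=0$ in $R_{r-1}(C_d)$ is equivalent to $\alpha\cdot(\theta^{n+1-r}x^j) = 0$ for all relevant $j$, i.e. to $\alpha$ being orthogonal to every monomial $\theta^{n+1-r}x^{n-(n+1-r)}, \theta^{n+2-r}x^{\cdots}, \ldots$ — which is exactly $\alpha\in (\theta^{\geq n+1-r, n})^\perp$, where I use the notation $(\theta^{\geq i,m})^\perp\subseteq R_m(C_d)$ introduced just before the statement. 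This gives $\AJt_n^r(C_d) = (\theta^{\geq n+1-r,n})^\perp\cap \Psefft_n(C_d)$, and the inequality on dimensions is then immediate since the cone is contained in the subspace $(\theta^{\geq n+1-r,n})^\perp$, whose dimension equals $\codim\theta^{\geq n+1-r,n}$ by Proposition \ref{basetaut}\eqref{basetaut4} applied inside $R^n(C_d)$.

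For \eqref{P:AJtheta2} I would invoke Remark \ref{perf} with $V = R_n(C_d)$, $K = \Psefft_n(C_d)$ (which is salient and full in $R_n(C_d)$), and $L = (\theta^{\geq n+1-r,n})^\perp$, so that $L^\perp = \theta^{\geq n+1-r,n}\subseteq R_{n}(C_d)^\vee = R^n(C_d)$ and $K^\vee = \Neft^n(C_d)$. The hypothesis $\dim\AJt_n^r(C_d) = \dim(\theta^{\geq n+1-r,n})^\perp$ says precisely that $K\cap L = \AJt_n^r(C_d)$ is a full cone in $L$; then Remark \ref{perf} will give that $\AJt_n^r(C_d)$ is a perfect face of $\Psefft_n(C_d)$ with perfect dual face $K^\vee\cap L^\perp = \Neft^n(C_d)\cap\theta^{\geq n+1-r,n}$, provided I also check that $\Neft^n(C_d)\cap\theta^{\geq n+1-r,n}$ is full in $\theta^{\geq n+1-r,n}$. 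For this last point I would note that $\theta^{\geq n+1-r,n}$ is spanned by the monomials $\theta^{n+1-r}x^{r-1}, \theta^{n+2-r}x^{r-2},\ldots$, each of which is a product of the nef (indeed semiample) class $\theta$ and the ample class $x$, hence nef; a non-empty open subset of the simplicial cone on these monomials consists of nef classes in the interior of $\theta^{\geq n+1-r,n}$ — e.g. positive combinations — so the cone $\Neft^n(C_d)\cap\theta^{\geq n+1-r,n}$ has non-empty interior in $\theta^{\geq n+1-r,n}$ and is therefore full. This verifies the second hypothesis of Remark \ref{perf} and completes the proof.

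The main obstacle I anticipate is getting the bookkeeping of powers and codimensions exactly right: one must be careful that the orthogonal complement $(\theta^{\geq n+1-r,n})^\perp$ is taken inside $R_n(C_d)$ (not $R^n(C_d)$), that the index $n+1-r$ rather than $n-r$ is the correct one coming from Definition \ref{contract} with $k=n$, and that the list of monomials appearing in $\theta^{\geq n+1-r,n}$ matches those tested in the vanishing $\alpha\cdot\theta^{n+1-r}=0$ after expanding in the standard basis. The fullness of $\Neft^n(C_d)\cap\theta^{\geq n+1-r,n}$ in $\theta^{\geq n+1-r,n}$ is the other place where a genuine (if short) argument is needed rather than a formal manipulation, but it follows from semiampleness of $\theta$ and ampleness of $x$ together with the explicit monomial basis from Proposition \ref{P:theta}\eqref{P:theta1}.
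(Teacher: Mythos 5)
Your proof is correct and follows essentially the same route as the paper: part (i) rests on the characterization $\contr_{\alpha_d}(\beta)\ge r \Leftrightarrow \beta\cdot\theta^{\,n+1-r}=0$ combined with the non-degeneracy of the tautological pairing (the paper phrases the reverse inclusion as a contradiction with an arbitrary $\gamma\in R^{r-1}(C_d)$, which is exactly what your ``test against all monomials'' step amounts to), and part (ii) applies Remark \ref{perf} with fullness of $\theta^{\geq n+1-r,n}\cap \Neft^n(C_d)$ coming from the nefness of the monomials in $\theta$ and $x$, just as in the paper. The only cosmetic caveat is that the phrase ``$\alpha\cdot(\theta^{n+1-r}x^j)=0$ for all relevant $j$'' should be stated as pairing $\alpha\cdot\theta^{\,n+1-r}$ against all monomials $x^a\theta^b$ spanning $R^{r-1}(C_d)$, which is what your subsequent list of monomials $\theta^{n+1-r}x^{r-1},\theta^{n+2-r}x^{r-2},\ldots$ already indicates.
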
  
When the assumption of \eqref{P:AJtheta2} holds true, the perfect face $\theta^{\geq n+1-r,n} \cap \Neft^n(C_d)$ of $\Neft^n(C_d)$ will be called  \emph{nef $\theta$-face}. 
A nef $\theta$-face of dimension one will be called \emph{nef $\theta$-edge}, and using Proposition 
 \ref{P:theta}\eqref{P:theta1} it is easy to see that a nef $\theta$-edge is equal to 
 $$\theta^{\geq \min\{n,g\},n} \cap \Neft^n(C_d)=\cone(\theta^{\min\{n,g\}}x^{n-\min\{n,g\}}).$$ 
 
\begin{proof}
\eqref{P:AJtheta1}: note that, since $\theta$ is the pull-back via $\alpha_d:C_d\to \Pic^d(C)$ of an ample line bundle on $\Pic^d(C)$, from Definition \ref{contract} it follows that for any $\beta\in \Pseff_n(C_d)$ we have 
 \begin{equation}\label{conttheta} 
 \contr_{\alpha_d}(\beta)\geq r \Leftrightarrow \beta\cdot \theta^{n+1-r}=0. 
 \end{equation}
Therefore, since $\AJt_n^r(C_d)$ is the conic hull of all elements $\beta \in \Psefft_n(C_d)$ having contractibility index at least $r$, formula \eqref{conttheta} implies that  $\AJt_n^r(C_d)\subseteq (\theta^{\geq n+1-r,n})^\perp \cap \Psefft_n(C_d)$.
In order to prove the reverse implication, by contradiction assume that there exists an element $\beta\in \Psefft_n(C_d)$ such that $\beta\in (\theta^{\geq n+1-r,n})^\perp$ and $\beta\cdot \theta^{n+1-r}\neq 0$. The element  $\beta\cdot \theta^{n+1-r}$ lies in $R^{d+1-r}(C_d)$ and, since it is non-zero (which implies that $r\geq 1$), applying Proposition \ref{basetaut}\eqref{basetaut5} we find an element $\gamma\in R^{r-1}(C_d)$ such that $\beta\cdot \theta^{n+1-r} \cdot \gamma\neq 0$. But then, since $ \theta^{n+1-r} \cdot \gamma\in \theta^{\geq n+1-r,n}$, we 
find that $\beta \not\in (\theta^{\geq n+1-r,n})^\perp$, which is the desired contradiction. 


Part \eqref{P:AJtheta2}: if $\dim \AJt_n^r(C_d)=\dim (\theta^{\geq n+1-r,n})^\perp$ then $\langle \AJt_n^r(C_d)\rangle=(\theta^{\geq n+1-r,n})^\perp$, which implies that the dual face of $\AJt_n^r(C_d)$ is equal to 
$$((\theta^{\geq n+1-r,n})^\perp)^\perp \cap \Neft^n(C_d)= \theta^{\geq n+1-r,n} \cap \Neft^n(C_d).$$
 Observe that $\AJt_n^r(C_d)$ is a full cone in $(\theta^{\geq n+1-r,n})^\perp$ by assumption, while $\theta^{\geq n+1-r,n} \cap \Neft^n(C_d)$ is a full cone in $\theta^{\geq n+1-r,n}$ since $\theta$ is nef (hence limit of ample classes) and $x$ is ample. 
Therefore, we can apply Remark \ref{perf} in order to conclude that $\AJt_n^r(C_d)$ and $\theta^{\geq n+1-r,n} \cap \Neft^n(C_d)$
are perfect dual faces.  
\end{proof}

\begin{remark} 
The equality \eqref{E:AJtheta} is true also for the (non-tautological) Abel-Jacobi faces with the same proof (taking orthogonals in $N_n(C_d)$).
\end{remark}


Note that Proposition \ref{P:AJtheta}\eqref{P:AJtheta2} gives a criterion to find perfect faces of $\Psefft_n(C_d)$. Let us see how we could apply this criterion to find facets (which are always perfect) and edges, i.e. one-dimensional perfect faces.

The dimension of  $(\theta^{\geq n+1-r,n})^\perp\subseteq R_n(C_d)$, which 
is equal to the codimension of $\theta^{\geq n+1-r,n}\subseteq R^n(C_d)$, can be computed (in the non trivial range $n+1-r\leq g$) using Proposition \ref{P:theta}\eqref{P:theta1} and it is equal to:
\begin{equation}\label{E:exp-dim}
\dim (\theta^{\geq n+1-r,n})^\perp=\codim \theta^{\geq n+1-r,n}=
\begin{cases}
n+1-r & \text{ if either }  r(n)=n     \text{ or } r(n)=g, \\
\max\{d-g+1-r, 0\} & \text{ if } d-n\leq g\leq n, \\
\max\{d-n+1-r,0\} & \text { if } d-n\leq n\leq g. 
\end{cases}
\end{equation}
Therefore, we find that 
$$\codim (\theta^{\geq n+1-r,n})^\perp=1 \Leftrightarrow \dim (\theta^{\geq n+1-r,n})^\perp=r(n)\Leftrightarrow 
\begin{cases} 
r=1 & \text{ if } n\leq g, \\
r=n+1-g & \text{ if } g\leq n.
\end{cases}
$$
Let us now examine when, in each of the above two cases, we get indeed a tautological Abel-Jacobi facet.  

\begin{prop}\label{P:facet}
\noindent 
\begin{enumerate}[(i)]
\item \label{P:facet1} If $g\leq n$ then $\AJt_n^{n+1-g}(C_d)$ is a facet of $\Psefft_n(C_d)$.
\item \label{P:facet2} If $n\leq g$ then $\AJt^1_n(C_d)$ is a facet of $\Psefft_n(C_d)$ under one of the following assumptions:
\begin{enumerate}[(a)]
\item \label{P:faceta} $\gon_n(C)\leq d$ (which is always satisfied if $g\leq d-n$);
\item \label{P:facetb} $n=g-1$;
\item \label{P:facetc} $g\leq d$ and $C$ is very general over an uncountable base field $k$.
\end{enumerate}
\end{enumerate}
\end{prop}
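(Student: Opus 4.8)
The plan is to verify in each case that the hypothesis of Proposition \ref{P:AJtheta}\eqref{P:AJtheta2} holds, i.e. that $\dim \AJt_n^r(C_d)$ attains its maximal possible value $\dim(\theta^{\geq n+1-r,n})^\perp$, which by the displayed computation \eqref{E:exp-dim} equals $r(n)=1$ in the relevant cases (so $r=n+1-g$ when $g\le n$, and $r=1$ when $n\le g$). Since a codimension-one perfect face is automatically a facet, it suffices to produce a single nonzero class in $\AJt_n^r(C_d)$; then $\dim\AJt_n^r(C_d)\ge 1 = \dim(\theta^{\geq n+1-r,n})^\perp$ and Proposition \ref{P:AJtheta}\eqref{P:AJtheta2} applies (the reverse inequality $\dim\AJt_n^r(C_d)\le\dim(\theta^{\geq n+1-r,n})^\perp$ being part \eqref{P:AJtheta1} of that proposition). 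So the whole problem reduces to exhibiting a nonzero tautological pseudoeffective $n$-cycle with contractibility index $\ge r$ — equivalently, by Proposition \ref{P:nonzero}, to showing $C_d^r$ has a tautological irreducible component of maximal dimension $\ge n$ (when $n=\dim C_d^r$ one gets this component directly; when $n<\dim C_d^r$ one slices down as in the proof of Proposition \ref{extr-crit}).

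For part \eqref{P:facet1}: when $g\le n$ Riemann–Roch gives $C_d^{n+1-g}=C_d$ (indeed $r\le d-g$ here since $n\le d$), so $\dim C_d^{n+1-g}=d\ge n$ and the whole of $C_d$ — hence $[C_d]$ and any slice of it by powers of the ample class $x$ — is tautological; thus $\AJt_n^{n+1-g}(C_d)\ne\{0\}$ by Proposition \ref{P:nonzero}\eqref{BNextr1} (tautological version), and we conclude. For part \eqref{P:facet2}, where $n\le g$ and $r=1$, we need a tautological component of $C_d^1$ of dimension $\ge n$. Under assumption \eqref{P:faceta}, $\gon_n(C)\le d$ means $C$ carries a $\g_d^n$, so the subordinate variety $\Gamma_d(\g_d^n)\subseteq C_d^1$ is an $n$-dimensional irreducible subvariety contracted to a point by $\alpha_d$; its class is tautological by Fact \ref{subcyc} (the fact cited in the proof of Theorem \ref{AJtaut}), giving a tautological component of $C_d^1$ of dimension exactly $n$, and the inequality $g\le d-n\Rightarrow \gon_n(C)\le d$ follows from Lemma \ref{L:gonindex}. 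Under assumption \eqref{P:facetb}, $n=g-1$: here one uses that $C_g^1$ — more precisely the relevant Brill–Noether locus of the correct degree — is of expected dimension $g-1$ for \emph{every} curve and has tautological class by Example \ref{BN=subor}/Fact \ref{BNclass}\eqref{BNclass2} (this is the $C_d^{d-g+1}$ with $d=g$, namely $[C_g^1]=\theta-x$), so again $\AJt_{g-1}^1(C_d)\ne\{0\}$. Under assumption \eqref{P:facetc}, $C$ very general over an uncountable algebraically closed field and $g\le d$: then $R^*(C_d)=N^*(C_d)$, so tautological and ordinary cones coincide, and $C_d^1=C_d$ by Riemann–Roch since $d\ge g\Rightarrow 1\le d-g+1$... more carefully, $d\ge g$ gives $\dim|D|\ge d-g\ge 0$ generically but one wants $C_d^1\ne\emptyset$ of dimension $\ge n$; this holds because $\alpha_d$ is surjective with positive-dimensional fibers (as $d\ge g+1$, or $d=g$ where fibers over $W_g^1$ are positive-dimensional), and every subvariety is tautological, so Proposition \ref{P:nonzero}\eqref{BNextr1} applies.

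The main obstacle is the bookkeeping in case \eqref{P:facetc}: one must check that the range $g\le d$ (not merely $d\ge g+1$) still forces the existence of a positive-dimensional fiber of $\alpha_d$, i.e. that $W_d^1(C)\ne\emptyset$; for $d=g$ this is the statement that a general curve of genus $g$ carries a $\g_g^1$, which holds since $\rho(g,1,g)=g-2\cdot(g-g+1)=g-2\ge 0$ for $g\ge 2$ (and $g=1$ is trivial, $C_d\to\Pic^d$ having one-dimensional fibers for $d\ge 1$). One should also double-check the edge cases where $r(n)$ might not equal $1$ — but the hypotheses $g\le n$ (case (i)) and $n\le g$ together with $\gon_n(C)\le d$ or the other assumptions (case (ii)) are exactly what forces $d-n\ge\min\{n,g\}$ via Remark \ref{R:r-gon}, hence $r(n)=\min\{n,g\}$, matching the computation in \eqref{E:exp-dim} that gives codimension one. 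Once the nonzero class is in hand in each case, the conclusion is immediate from Proposition \ref{P:AJtheta}\eqref{P:AJtheta2}, with no further calculation needed.
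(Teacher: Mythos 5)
Your reduction rests on a misreading of the dimension count, and this is a genuine gap. In the facet cases the paper's display after Proposition \ref{P:AJtheta} says that $\codim\,(\theta^{\geq n+1-r,n})^\perp=1$, equivalently $\dim (\theta^{\geq n+1-r,n})^\perp=r(n)=\min\{n,d-n,g\}$; it is the subspace $\theta^{\geq n+1-r,n}\subset R^n(C_d)$ that is one-dimensional, not its orthogonal. So your key step ``it suffices to produce a single nonzero class, since then $\dim \AJt_n^r(C_d)\ge 1=\dim(\theta^{\geq n+1-r,n})^\perp$'' is false whenever $r(n)>1$: the hypothesis of Proposition \ref{P:AJtheta}\eqref{P:AJtheta2} is that $\AJt_n^r(C_d)$ is \emph{full-dimensional} in the hyperplane $(\theta^{\geq n+1-r,n})^\perp$, i.e.\ that it contains $r(n)$ linearly independent classes. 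Exhibiting one nonzero class only proves non-triviality of the face, which is exactly Corollary \ref{C:nontriv} and is strictly weaker than being a facet. This is why the paper's proofs of the theorems covering parts \eqref{P:facet1}, \eqref{P:faceta}, \eqref{P:facetb} (Theorems \ref{thetaPseff}, \ref{thetaNef}, \ref{T:BNfaces}) construct a whole independent family of effective classes annihilated by $\theta^{n+1-r}$: the monomials $\theta^jx^{\,d-n-j}$ (via Lemma \ref{L:lowbound} and Proposition \ref{P:theta}\eqref{P:theta2}) when $g\le\max\{n,d-n\}$, the cycles $[\Gamma_0],\dots,[\Gamma_{n-1}]$ of Proposition \ref{cyclecontr} when $\gon_n(C)\le d$, and the cycles $[\Upsilon_0],\dots,[\Upsilon_{d-g}]$ of Proposition \ref{P:varU} when $n=g-1$, together with the linear-independence statement of Corollary \ref{C:Sigmaper} (resp.\ Lemma \ref{intSigma}). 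Your argument produces none of these, so even after fixing the bookkeeping it proves only that $\AJt_n^r(C_d)\neq\{0\}$.

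The gap is most serious in case \eqref{P:facetc}, which is precisely the range not covered by the other constructions (one can have $g\le d<\gon_n(C)$ with $n<g-1$ and $n,d-n<g$, e.g.\ for a very general curve with $d$ slightly above $g$ and $n$ close to $g$). There your appeal to $C_d^1\neq\emptyset$ and Proposition \ref{P:nonzero}\eqref{BNextr1} again yields only non-triviality. The paper instead uses that $\alpha_d$ is surjective with projective-space fibers and the base field uncountable to apply the Fulger--Lehmann theorem on kernels of numerical pushforwards \cite[Thm.\ 1.2]{fl3}, obtaining $\langle \AJ^1_n(C_d)\rangle=\ker\bigl((\alpha_d)_*\colon N_n(C_d)\to N_n(\Pic^d(C))\bigr)$, and then identifies this kernel with the full hyperplane $(\theta^{\geq n,n})^\perp$ using $N_*(C_d)=R_*(C_d)$ and $N_n(\Pic^d(C))=\langle[\Theta]^{g-n}\rangle$ for a very general curve. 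That theorem (or some substitute giving the span of the face) is the essential missing ingredient in your treatment of \eqref{P:facetc}; without it, no amount of non-emptiness of Brill--Noether loci gives a facet.
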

Note that:  \eqref{P:facet1} (and \eqref{P:faceta} for $g\leq d-n$) is a special case of  Theorem \ref{thetaPseff}, \eqref{P:faceta} is a special case of Theorem \ref{thetaNef}, and 
 \eqref{P:facetb} for $d-n\leq g-1$ (otherwise it belongs to case \eqref{P:faceta}) is a special case of Theorem \ref{T:BNfaces}.

\begin{proof}
As observed above, parts \eqref{P:facet1}, \eqref{P:faceta} and \eqref{P:facetb} are special case of theorems that will be proved later. 

Let us prove part \eqref{P:facetc}. The assumption that $g\leq d$ implies that the Abel-Jacobi morphism $\alpha_d$ is surjective. Hence, using  that $k$ is uncountable (and algebraically closed) and that the fibers of $\alpha_d$ are projective spaces, we can apply \cite[Thm. 1.2]{fl3} in order to conclude that $\langle \AJ^1_n(C_d)\rangle=\ker ((\alpha_d)_*:N_n(C_d)\to N_n(\Pic^d(C)))$. Since $C$ is very general, we have that $N_n(C_d)=R_n(C_d)$ (which also implies that $\AJ^1_n(C_d)=\AJt_n(C_d)$) and $N_n(\Pic^d(C))=\langle [\Theta]^{g-n}\rangle$  (see \cite[Fact 2.6]{BKLV} and Ben Moonen's appendix to \cite{BKLV}).
Therefore, the kernel of $(\alpha_d)_*:N_n(C_d)\to N_n(\Pic^d(C))$ is isomorphic to the linear space of all elements $z\in R_n(C_d)$ such that  $0=(\alpha_d)_*(z)\cdot [\Theta]^n=z\cdot \theta^n=0$,
that is to $(\theta^{\geq n, n})^{\perp}$. Putting everything together, we deduce that $\langle \AJt^1_n(C_d)\rangle=(\theta^{\geq n, n})^{\perp}$, which implies that $ \AJt^1_n(C_d)$ is a facet of $\Psefft_n(C_d)$ (since $(\theta^{\geq n, n})^{\perp}$ has codimension one in $R_n(C_d)$ as observed above). 
\end{proof}
Let us now discuss when Proposition \ref{P:AJtheta}\eqref{P:AJtheta2} can be used to find edges of $\Psefft_n(C_d)$.
Using \eqref{E:exp-dim}, we find that 
$$\dim (\theta^{\geq n+1-r,n})^\perp=1 \Longleftrightarrow 
\begin{cases}
r=n & \text{ if either }  r(n)=n     \text{ or } r(n)=g, \\
r=d-g & \text{ if } d-n\leq g\leq n, \\
r=d-n & \text { if } d-n\leq n\leq g. 
\end{cases}
$$


Let us now check, in each of the above cases, when we can apply the criterion of Proposition \ref{P:nonzero} to conclude that $\AJt_n^ r(C_d)$ is non-zero, and hence that it is an edge of $\Psefft_n(C_d)$.

 We will distinguish the following cases (assuming that $1\leq n\leq d-1$ to avoid trivial faces):
\begin{enumerate}[(A)]
\item If    $g\leq d-n$   then clearly $C_d^n=C_d$ and we deduce that $\AJt_n^n(C_d)$ is non-zero;
\item If $d-n\leq g\leq n$ then clearly $C_d^{d-g}=C_d$ and we deduce that $\AJt_n^{d-g}(C_d)$ is non-zero:
\item If $n\leq d-n<g$ (which implies that $d\leq 2g-2$) then $\AJt_n^n(C_d)$ is non-zero if $C_d^n$ has some tautological irreducible component of maximal dimension  and if 
$n\leq \dim C_d^n=n+\dim W_d^n(C)$, which is equivalent to the non-emptiness of $W_d^n(C)$, or in other words to $d\geq \gon_n(C)$.  
\item If $d-n\leq n<g$ (which implies that $d\leq 2g-2$) then $\AJt_n^{d-n}(C_d)$ is non-zero if $C_d^n$ has some tautological irreducible component of maximal dimension  and if 
$$n\leq \dim C_d^{d-n}=d-n+\dim W_d^{d-n}(C) \Longleftrightarrow \dim W_d^{d-n}(C)\geq 2n-d=d-2(d-n).$$ 
By Martens' theorem  (see \cite[Chap. IV, Thm. 5.1]{GAC1}), this can happen if either $d-n=d-g+1$, i.e. $n=g-1$, 
or $C$ is hyperelliptic.
\end{enumerate}
We will see in the next sections that indeed in all the above cases we get edges of $\Psefft_n(C_d)$: cases (A) and (B) will be analyzed in Section \ref{S:theta} (and indeed Case (A) also follows from Section \ref{S:Neftheta}), case (C)  in Section  \ref{S:Neftheta}, case (D) with $n=g-1$ 
 in Section \ref{S:BN} and case (D) for $C$ hyperelliptic in Section \ref{S:hyper}. 

Quite remarkably, we will see that in all the above cases  the non-trivial tautological Abel-Jacobi faces of $\Psefft_n(C_d)$ form  a \emph{maximal chain of perfect non-trivial faces}, i.e. a chain of perfect non-trivial faces of $\Psefft_n(C_d)$ whose dimensions start from one and increase by one at each step until getting to the dimension of $\Psefft_n(C_d)$ minus one. 

\begin{remark}\label{R:BNperf}
The unique BN rays $\cone([C_d^r])$ to which we can apply Proposition \ref{P:AJtheta}\eqref{P:AJtheta2} are those with $\rho=\rho(g,r,d)=0$ or with $d=g+r-1$.

Indeed, since $\AJt_{r+\rho}^r(C_d)=\cone([C_d^r])$ has dimension one, the hypothesis of Proposition \ref{P:AJtheta}\eqref{P:AJtheta2} does hold true if and only if
$$1=\dim(\theta^{\geq \rho+1, r+\rho})^\perp=\codim \theta^{\geq \rho+1, r+\rho}.$$
Now observe that $\displaystyle d=\frac{\rho+rg}{r+1}+r$ and the hypothesis on $d$ in Theorem \ref{AJtaut} translates into $0\leq \rho\leq g-r-1$. The dimension $n=r+\rho$ and the codimension $m=d-n$ of $C_d^r$ satisfy the following easily checked inequalities
$$
\begin{sis}
& n<g, \\
&m<g,\\
& n\geq m \Longleftrightarrow \frac{r}{2r+1}(g-r-1) \leq  \rho.
\end{sis}
$$
Using this, we can compute the codimension of $\theta^{\geq \rho+1, r+\rho}$ using Proposition \ref{P:theta}\eqref{P:theta1}:
$$\codim \theta^{\geq \rho+1, r+\rho}=
\begin{cases}
\rho+1 & \text{ if } \rho\leq  \frac{r}{2r+1}(g-r-1), \\
d-2r-\rho+1=r(g-d+r-1)+1 & \text{ if } \frac{r}{2r+1}(g-r-1) \leq  \rho.
\end{cases}
$$
Hence we see that $\codim \theta^{\geq \rho+1, r+\rho}=1$ if either $\rho=0$ or $d=g+r-1$.
\end{remark}

\section{The $\theta$-faces}\label{S:theta}

In this section,  we are going to describe the tautological Abel-Jacobi faces of $\Psefft_n(C_d)$ under the assumption that 
$g \le \max\{n,d-n\}$. Note that this assumption is always satisfied if $d>2g-2$ and it is never satisfied if $d<g$. 

Let us start with the following result that gives a lower bound on the dimension of the tautological Abel-Jacobi faces. 

\begin{lemma}\label{L:lowbound}
Let $0\leq n \leq d$ and  $1+\max\{0,n-g\}\leq r\leq n$. 
The cone 
$$\theta^{\geq g-n+r,d-n} \cap \Psefft_n(C_d)\subset \theta^{\geq g-n+r,d-n}\subseteq R_{n}(C_d)$$ 
is contained in $\AJt_n^r(C_d)$ and it is a full-dimensional  cone in  $\theta^{\geq g-n+r,d-n}$.
In particular, we have that 
$$\dim \AJt_n^r(C_d)\geq \dim \theta^{\geq g-n+r,d-n}.$$
 \end{lemma}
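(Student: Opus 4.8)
The strategy is to exhibit an explicit effective tautological cycle lying in the cone $\theta^{\geq g-n+r,d-n} \cap \Psefft_n(C_d)$ that witnesses full-dimensionality, and to control its contractibility index so that it lands in $\AJt_n^r(C_d)$. The first observation is that $\AJt_n^r(C_d)= (\theta^{\geq n+1-r,n})^\perp \cap \Psefft_n(C_d)$ by Proposition \ref{P:AJtheta}\eqref{P:AJtheta1}, so the containment $\theta^{\geq g-n+r,d-n} \cap \Psefft_n(C_d)\subseteq \AJt_n^r(C_d)$ is equivalent to the purely linear-algebra statement $\theta^{\geq g-n+r,d-n}\subseteq (\theta^{\geq n+1-r,n})^\perp$. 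But this is exactly the inclusion $(\theta^{\geq n+1-r,n})^\perp \supseteq \theta^{\geq g+1-(n+1-r),d-(d-n)}=\theta^{\geq g-n+r,d-n}$ furnished by Proposition \ref{P:theta}\eqref{P:theta2} (which is itself an immediate consequence of $\theta^{g+1}=0$). So the containment is essentially free once one matches up indices.

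\textbf{The substantive point} is full-dimensionality of $\theta^{\geq g-n+r,d-n} \cap \Psefft_n(C_d)$ inside the linear span $\theta^{\geq g-n+r,d-n}$. The plan is to use that $x$ is ample and $\theta$ is nef (hence a limit of ample classes): the generators of $\theta^{\geq g-n+r,d-n}$ listed in Proposition \ref{P:theta}\eqref{P:theta1} are, up to the conventions there, monomials of the form $\theta^j x^{d-n-j}$ with $j \ge g-n+r$. Each such monomial is the class of a complete intersection of nef divisors, hence lies in $\Nef_n(C_d)\cap R_n(C_d) \subseteq \Psefft_n(C_d)$ — wait, more carefully: a product of nef classes is nef but one wants it pseudoeffective; here one uses that $\theta$ is semiample so $\theta^j x^{d-n-j}$ is represented by an actual effective cycle (intersect $j$ general members of $|m\Theta|$ pulled back and $d-n-j$ general translates $X_{p_i}$), hence it is in $\Efft_n(C_d)\subseteq \Psefft_n(C_d)$. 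Since these effective representatives can be taken so that their classes are precisely a basis of $\theta^{\geq g-n+r,d-n}$, their conic hull is a full-dimensional subcone, giving the claim. One should double-check the boundary/degenerate ranges (when $g-n+r>d-n$, the space is zero and there is nothing to prove; when some exponents force monomials to vanish by $\theta^{g+1}=0$, the basis description in Proposition \ref{P:theta}\eqref{P:theta1} already accounts for this).

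\textbf{Main obstacle.} The delicate step is verifying that the effective representatives really do have classes spanning all of $\theta^{\geq g-n+r,d-n}$ rather than just generating a proper subcone — i.e. that no cancellation or coincidence among the monomial classes shrinks the span. This is handled by Proposition \ref{basetaut}\eqref{basetaut4}: the relevant monomials $\theta^j x^{d-n-j}$ are linearly independent in $R^{d-n}(C_d)$ precisely in the range where they are not forced to be zero, and Proposition \ref{P:theta}\eqref{P:theta1} identifies exactly which ones form a basis. So the "obstacle" is really just bookkeeping: one must check that the basis monomials from Proposition \ref{P:theta}\eqref{P:theta1} all have exponent on $\theta$ at least $g-n+r$, which is immediate from the definition of $\theta^{\geq g-n+r,d-n}$. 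The final sentence, $\dim \AJt_n^r(C_d)\geq \dim \theta^{\geq g-n+r,d-n}$, then follows formally since $\theta^{\geq g-n+r,d-n} \cap \Psefft_n(C_d)$ is full-dimensional in $\theta^{\geq g-n+r,d-n}$ and contained in $\AJt_n^r(C_d)$.
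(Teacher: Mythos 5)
Your proposal is correct and follows essentially the same route as the paper: the containment is exactly the combination of Proposition \ref{P:AJtheta}\eqref{P:AJtheta1} with the inclusion $(\theta^{\geq n+1-r,n})^\perp \supseteq \theta^{\geq g-n+r,d-n}$ from Proposition \ref{P:theta}\eqref{P:theta2}, and full-dimensionality is obtained because the monomials spanning $\theta^{\geq g-n+r,d-n}$ are pseudoeffective (the paper argues this directly from $\theta$ nef, hence a limit of ample classes, and $x$ ample, while you invoke semiampleness to produce effective representatives — an equivalent, slightly stronger justification of the same point). Only a cosmetic slip: the intermediate index $d-(d-n)$ should read $d-n$, as your final expression correctly shows.
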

\begin{proof}
Since $\theta^{\geq g-n+r,d-n}\subseteq (\theta^{n+1-r,n})^{\perp}$ by Proposition \ref{P:theta}\eqref{P:theta2}, we get that the cone $\theta^{\geq g-n+r,d-n} \cap \Psefft_n(C_d)$ is contained in 
$\AJt^r_n(C_d)$ by \eqref{E:AJtheta}.

By Definition \ref{thetalin}, the linear subspace $\theta^{\geq g-n+r,d-n}\subseteq R_{n}(C_d)$ is generated by monomials in $x$ and $\theta$.  Since $\theta$ is nef (hence limit of ample classes) and $x$ is ample we have that each monomial in $x$ and $\theta$ is a pseudoeffective class. This implies that $\theta^{\geq g-n+r,d-n} \cap \Psefft_n(C_d)$ is a full-dimensional cone in $\theta^{\geq g-n+r,d-n}$. 
\end{proof}

Using the above Lemma, we can now prove the main result of this section.

\begin{thm}\label{thetaPseff}
Let $0 \le n \le d$ and assume that $g \le \max\{n,d-n\}$.
Then the Abel-Jacobi face $\AJt_n^r(C_d)$  is equal to $\theta^{\geq g-n+r, d-n} \cap \Psefft_n(C_d)$, and it is non-trivial if and only if $1 + \max\{0,n-g\}\le r \le \min\{n,d-g\}$, in which case it is a  perfect face of dimension $\min\{n,d-g\}-r+1$ and codimension $r-\max\{n-g,0\}$.
Hence, the following chain
\begin{equation}\label{E:chain-th}
\theta^{\geq {\min\{g,d-n\}}}\cap \Psefft_n(C_d)\subset  \ldots \subset \theta^{\geq g+1-\min\{g,n\}}\cap \Psefft_n(C_d)\subset \Psefft_n(C_d)
\end{equation}
is a maximal chain of perfect non-trivial faces of $\Psefft_n(C_d)$.
The dual chain of \eqref{E:chain-th} is equal to 
\begin{equation}\label{E:nef-th}
\theta^{\geq {\min\{g,n\}}}\cap \Neft^n(C_d)\subset  \ldots \subset \theta^{\geq g+1-\min\{g,d-n\}}\cap \Neft^n(C_d)\subset \Neft^n(C_d).
\end{equation}
\end{thm}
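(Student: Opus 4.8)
The plan is to combine the lower bound of Lemma \ref{L:lowbound} with the upper bound on $\dim \AJt_n^r(C_d)$ coming from Proposition \ref{P:AJtheta}\eqref{P:AJtheta1}, and to observe that the two bounds coincide precisely in the regime $g\le \max\{n,d-n\}$. First I would note that by Proposition \ref{P:AJtheta}\eqref{P:AJtheta1} we always have $\AJt_n^r(C_d)=(\theta^{\geq n+1-r,n})^\perp\cap \Psefft_n(C_d)$ and hence $\dim \AJt_n^r(C_d)\le \dim(\theta^{\geq n+1-r,n})^\perp=\codim \theta^{\geq n+1-r,n}$. On the other hand Lemma \ref{L:lowbound} gives $\dim \AJt_n^r(C_d)\ge \dim \theta^{\geq g-n+r,d-n}$. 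The key computational point is that, under the hypothesis $g\le \max\{n,d-n\}$, Proposition \ref{P:theta}\eqref{P:theta2} tells us that $(\theta^{\geq n+1-r,n})^\perp=\theta^{\geq g+1-(n+1-r),d-n}=\theta^{\geq g-n+r,d-n}$ exactly (the first bullet of that statement applies). So the upper and lower bounds agree, forcing $\AJt_n^r(C_d)=\theta^{\geq g-n+r,d-n}\cap \Psefft_n(C_d)$ and, since equality of dimensions holds, Proposition \ref{P:AJtheta}\eqref{P:AJtheta2} applies to give that this is a perfect face with dual face $\theta^{\geq n+1-r,n}\cap \Neft^n(C_d)$.

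Next I would compute dimensions and the non-triviality range explicitly using Proposition \ref{P:theta}\eqref{P:theta1}. With $m=d-n$, under $g\le \max\{n,d-n\}$ one is in one of the cases $r(m)=m$, $r(m)=g$, or $r(m)=d-m\le g\le m$ (the last case $d-m\le m\le g$ is excluded since it would force $d-n\le n$ and $n\le g$, hence $g\le\max\{n,d-n\}=n\le g$, i.e. $n=g$, a boundary case handled separately or absorbed). A short case check of the formula for $\codim\theta^{\geq g-n+r,d-n}$ inside $R^{d-n}(C_d)=R_n(C_d)$ gives $\dim \AJt_n^r(C_d)=\min\{n,d-g\}-r+1$, which is positive exactly when $r\le \min\{n,d-g\}$; together with the standing constraint $r\ge 1+\max\{0,n-g\}$ this yields the stated non-triviality range. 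The cone is trivial (zero) outside this range, and equals all of $\Psefft_n(C_d)$ when $r\le \max\{0,n-g\}$, consistent with Definition \ref{D:AJfaces}.

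For the chain assertions: the subspaces $\theta^{\geq i,d-n}$ form a decreasing filtration with codimension jumps of exactly one as $i$ runs through the relevant range (again by Proposition \ref{P:theta}\eqref{P:theta1}), so intersecting with $\Psefft_n(C_d)$ produces perfect faces whose dimensions increase by one at each step; the top of the chain before $\Psefft_n(C_d)$ is a facet and the bottom is a one-dimensional edge $\cone(\theta^{\min\{g,d-n\}}x^{\max\{d-n-g,0\}})$. Dualizing via Proposition \ref{P:AJtheta}\eqref{P:AJtheta2} (each perfect face has a perfect dual face, and duality reverses inclusions) gives the chain \eqref{E:nef-th} in $\Neft^n(C_d)$, with the nef $\theta$-edge at the bottom. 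The main obstacle I anticipate is purely bookkeeping: correctly matching the indices in Proposition \ref{P:theta} (the shift $i\leftrightarrow g+1-i$ between $\theta^{\geq i,n}$ and its orthogonal, and the relation between the face index $r$ and the filtration index $g-n+r$) across the several sub-cases of $r(m)$, and checking that the boundary value $n=g$ or $d-n=g$ does not cause the formula for the dimension to degenerate. There is no deep geometric input beyond $\theta^{g+1}=0$ and the structure of $R^*(C_d)$ already recalled in Proposition \ref{basetaut}; everything reduces to the convex-geometric machinery of Remark \ref{perf} plus the explicit computations in Proposition \ref{P:theta}.
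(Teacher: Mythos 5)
Your proposal is correct and follows essentially the same route as the paper's proof: identify $(\theta^{\geq n+1-r,n})^\perp=\theta^{\geq g-n+r,d-n}$ via Proposition \ref{P:theta}\eqref{P:theta2}, squeeze $\AJt_n^r(C_d)$ between the bounds from Proposition \ref{P:AJtheta}\eqref{P:AJtheta1} and Lemma \ref{L:lowbound}, apply Proposition \ref{P:AJtheta}\eqref{P:AJtheta2} for perfection and the dual nef $\theta$-faces, and read off dimensions and the non-triviality range from Proposition \ref{P:theta}\eqref{P:theta1}. The only blemishes are bookkeeping slips you yourself flag (you write $\codim\theta^{\geq g-n+r,d-n}$ where you mean its dimension, i.e.\ $\codim\theta^{\geq n+1-r,n}$ in $R^n(C_d)$, and the excluded boundary case is $d-n=g$ when you take $m=d-n$, not $n=g$); these do not affect the argument.
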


The  faces in \eqref{E:chain-th}  will be called \emph{pseff $\theta$-faces}, while the faces in \eqref{E:nef-th} are the nef $\theta$-faces introduced after Proposition \ref{P:AJtheta}. 
Note that 
$$\cone(\theta^{\min\{g,d-n\}}x^{d-n-\min\{g,d-n\}})=\theta^{\geq {\min\{g,d-n\}}}\cap \Psefft_n(C_d)=\theta^{\geq {\min\{g,d-n\}}}\cap \Neft^{d-n}(C_d)$$ 
is an edge (i.e. perfect extremal ray) of $\Psefft_n(C_d)$, which we will call the \emph{pseff $\theta$-edge}, and it coincides with the nef $\theta$-edge. 
 On the other hand, since the class $x$ is ample, the other monomials in $x$ and $\theta$ cannot generate an extremal ray of either $\Psefft_n(C_d)$ or of $\Neft^{d-n}(C_d)$. 
\begin{proof}
Fix an integer $r$ such that $1+\max\{0,n-g\}\leq r\leq n$.
Using the assumption $g\leq \max\{n,d-n\}$, Proposition \ref{P:theta}\eqref{P:theta2} implies that 
$$(\theta^{\geq n+1-r, n})^\perp=\theta^{\geq g-n+r, d-n}\subseteq R_n(C_d).$$
This, together with Proposition \ref{P:AJtheta}\eqref{P:AJtheta1} and Lemma \ref{L:lowbound}, gives the equality of cones
$$\AJt_n^r(C_d)=\theta^{\geq g-n+r, d-n} \cap \Psefft_n(C_d)$$
and the fact that 
$$\dim \AJt_n^r(C_d)=\dim (\theta^{\geq n+1-r, n})^\perp.$$
Hence we can  apply Proposition \ref{P:AJtheta}\eqref{P:AJtheta2} in order to conclude that $\AJt_n^r(C_d)$ is a perfect face of $\Psefft_n(C_d)$ whose dual face is equal to $\theta^{\geq n+1-r,n} \cap \Neft^n(C_d)$. 

Finally,  Proposition \ref{P:theta}\eqref{P:theta1} gives that the linear subspace $(\theta^{\geq n+1-r, n})^\perp\subseteq R_n(C_d) $ is non-trivial  if and only if $1 + \max\{0,n-g\}\leq r\leq \min\{n,d-g\}$, in which case it has dimension $\min\{n,d-g\}-r+1$. 
\end{proof}

\begin{remark}\label{sharp1} 
Notice that, outside of the range $g \le \max\{n,d-n\}$, the cones $\theta^{\geq i} \cap \Psefft_n(C_d)$ may not be a face of $\Psefft_n(C_d)$. To see this let $m$ be odd and such that $1\leq m \leq g-1$ and let $d = g+m-1$. Now, by  \eqref{E:clBN}, the coefficient of $x^m$ in $[C_{g+m-1}^m]$ is $(-1)^m < 0$ while, for any $m$-codimensional diagonal, the same coefficient is positive by \cite[Prop. 3.1]{BKLV}. Hence, in $\Psefft_{g-1}(C_{g+m-1})$, the class $[C_{g+m-1}^m]$ and the $m$-codimensional diagonals lie in different half-spaces with respect to the hyperplane $\theta^{\geq 1}$, which then implies that $\theta^{\geq 1}\cap \Psefft_{g-1}(C_d)$ is not a face of $\Psefft_{g-1}(C_{g+m-1})$.
\end{remark}

Let us finish this section by giving upper and lower bounds for the dimension of the tautological Abel-Jacobi faces in the numerical ranges not included in the above Theorem \ref{thetaPseff}.

\begin{prop}\label{P:uplow}
Assume that 
$g \ge \max\{n,d-n\}$. Then
\begin{enumerate}[(i)]
\item \label{P:uplow1} $\AJt_n^r(C_d)$ is trivial unless $1\leq r\leq \min\{n, d-n\}$.
\item \label{P:uplow2} If $1\leq r\leq \min\{n, d-n\}$ then 
\begin{equation}\label{E:uplow}
\max\{d+1-g-r,0\}\le \dim \AJt_n^r(C_d)\leq r(n)-r+1.
\end{equation}
In particular, if $1\leq r \leq d-g$ (which forces $g+1\le d$) then $\AJt_n^r(C_d)$ is non-trivial. 
\end{enumerate}
\end{prop}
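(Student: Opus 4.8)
The statement has two parts, and both follow from the interplay between Proposition~\ref{P:AJtheta}\eqref{P:AJtheta1} (which gives $\AJt_n^r(C_d) = (\theta^{\geq n+1-r,n})^\perp \cap \Psefft_n(C_d)$, hence $\dim \AJt_n^r(C_d) \le \codim \theta^{\geq n+1-r,n}$) and Lemma~\ref{L:lowbound} (which gives $\dim \AJt_n^r(C_d) \ge \dim \theta^{\geq g-n+r, d-n}$), combined with the explicit codimension formulas of Proposition~\ref{P:theta}\eqref{P:theta1}. The running hypothesis $g \ge \max\{n, d-n\}$ means we are in the regime $r(n) = \min\{n, d-n\}$ and, dually, $r(d-n) = \min\{n,d-n\}$, so the relevant cases of Proposition~\ref{P:theta} are the third and fourth ones (where $r(m) = d-m \le g$), not the first two.

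For part~\eqref{P:uplow1}, I would argue that if $r > \min\{n, d-n\}$ then the upper bound from Proposition~\ref{P:AJtheta}\eqref{P:AJtheta1} already forces triviality. Concretely, by Proposition~\ref{P:theta}\eqref{P:theta1} applied with $m = n$: if $n \le d-n$ (so $r(n) = n$... wait, no—under $g \ge \max\{n,d-n\}$ and $n \le d-n$ we have $r(n) = \min\{n,d-n,g\} = n$), then $\codim \theta^{\geq n+1-r, n} = \max\{(n+1-r) - 2n + d, 0\} = \max\{d-n+1-r, 0\}$, which vanishes precisely when $r \ge d-n+1$, i.e.\ when $r > d-n = \min\{n,d-n\}$; and if $n \ge d-n$ then $r(n) = d-n$ and, since also $g \ge n$, we are in the case $d-n \le g$ with... here one must be a little careful whether $n \le g$ or not, but $g \ge n$ by hypothesis, so it's the fourth case of Proposition~\ref{P:theta}\eqref{P:theta1}: $\codim \theta^{\geq n+1-r,n} = \max\{(n+1-r) - 2n + d, 0\} = \max\{d-n+1-r, 0\}$ again, vanishing for $r > d-n = \min\{n,d-n\}$. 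Either way, $r > \min\{n,d-n\}$ kills the ambient space $(\theta^{\geq n+1-r,n})^\perp$, hence the cone. (One should also recall $r \ge 1 + \max\{0, n-g\} = 1$ here since $n \le g$.)

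For part~\eqref{P:uplow2}, the upper bound $\dim \AJt_n^r(C_d) \le r(n) - r + 1$ is immediate from Proposition~\ref{P:AJtheta}\eqref{P:AJtheta1} together with the computation just done: in this regime $\codim \theta^{\geq n+1-r,n} = \max\{d-n+1-r, 0\}$, and since $r(n) = \min\{n,d-n\}$ and $r \le \min\{n,d-n\}$ one checks $\max\{d-n+1-r,0\} = d-n+1-r$ when $d-n \le n$, but in general we want the bound stated as $r(n)-r+1$; I would verify that $\max\{d-n+1-r,0\} \le r(n)-r+1$ by splitting on $n \lessgtr d-n$ (when $n \le d-n$, $r(n)=n$ and $d-n+1-r$ could exceed $n-r+1$—so here one instead uses the $m=n$, $r(n)=n$... no: under $g\ge d-n\ge n$, $r(n) = n$, and $\theta^{\geq i,n}$ for $i = n+1-r \le n$ has a basis $\{\theta^i x^{n-i},\dots,\theta^n\}$ only if we're in case one of Prop.~\ref{P:theta}, which requires $r(n) = n$ \emph{or} $g$—yes that holds—giving $\codim = i = n+1-r$, matching $r(n)-r+1$ exactly). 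So the upper bound is $\codim \theta^{\geq n+1-r,n} = r(n)-r+1$ in all sub-cases, recovering the stated inequality as an equality on the ambient dimension. The lower bound $\max\{d+1-g-r, 0\} \le \dim \AJt_n^r(C_d)$ comes from Lemma~\ref{L:lowbound}: $\dim \AJt_n^r(C_d) \ge \dim \theta^{\geq g-n+r, d-n} = \codim_{\,R^{d-n}}\big(\text{complement}\big)$, and I would compute $\dim \theta^{\geq g-n+r,d-n}$ directly from Proposition~\ref{P:theta}\eqref{P:theta1} with $m = d-n$, $i = g-n+r$: since $g \ge \max\{n,d-n\}$ we have $r(d-n) = \min\{n, d-n\} = n$ if $d-n \ge n$ (case one, $\dim \theta^{\geq i,m} = (m+1) - i = (d-n+1)-(g-n+r) = d-g+1-r$) or $r(d-n) = d-n$ if $d-n \le n$ (with $d-n \le n \le g$, fourth case, $\codim \theta^{\geq i, d-n} = \max\{i - 2(d-n)+d, 0\} = \max\{g-n+r-d+2n, 0\} = \max\{n+g+r-d, 0\}$, so $\dim = (n+1) - \max\{n+g+r-d,0\} = \min\{d-g+1-r, n+1\} \ge d-g+1-r$ when nonneg). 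In both sub-cases $\dim \theta^{\geq g-n+r,d-n} \ge \max\{d-g+1-r, 0\}$, giving the lower bound; and when $d - g \ge r \ge 1$ this is strictly positive, so $\AJt_n^r(C_d)$ is non-trivial, proving the last sentence.

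\textbf{Main obstacle.} The real work is purely bookkeeping: correctly identifying which of the four cases of Proposition~\ref{P:theta}\eqref{P:theta1} applies for $m = n$ versus $m = d-n$ under the hypothesis $g \ge \max\{n, d-n\}$, and then carefully matching the resulting $\max\{\cdot, 0\}$ expressions against the clean formulas $r(n)-r+1$ and $\max\{d+1-g-r, 0\}$ claimed in the proposition, with attention to the degenerate boundary cases $n = d-n$, $n = g$, $d-n = g$, and $r$ at the endpoints of its allowed range. No new geometric input is needed beyond Proposition~\ref{P:AJtheta} and Lemma~\ref{L:lowbound}; the whole proposition is a corollary of those two results plus the dimension count of the $\theta$-filtration.
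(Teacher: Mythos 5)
Your proposal is correct and takes essentially the same route as the paper's proof: it sandwiches $\dim \AJt_n^r(C_d)$ between $\dim \theta^{\geq g-n+r,d-n}$ (Lemma~\ref{L:lowbound}) and $\codim \theta^{\geq n+1-r,n}$ (Proposition~\ref{P:AJtheta}\eqref{P:AJtheta1}), and then evaluates both via Proposition~\ref{P:theta}\eqref{P:theta1} under the hypothesis $g\geq\max\{n,d-n\}$, exactly as the paper does. The only blemishes are bookkeeping slips in the lower-bound computation for $m=d-n$ (twice you invoke a case of Proposition~\ref{P:theta}\eqref{P:theta1} whose hypothesis does not hold, and in the sub-case $d-n\leq n$ you take the ambient dimension of $R^{d-n}(C_d)$ to be $n+1$ instead of $r(d-n)+1=d-n+1$), but these errors cancel and your final values $\codim\theta^{\geq n+1-r,n}=r(n)+1-r$ and $\dim\theta^{\geq g-n+r,d-n}=\max\{d+1-g-r,0\}$ coincide with those in the paper, so the argument stands.
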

\begin{proof}
Observe that $\AJt_n^r(C_d)$ is defined only for $1=1+\max\{0,n-g\}\leq r \leq n$. Under this assumption, Proposition \ref{P:AJtheta}\eqref{P:AJtheta1} and Lemma \ref{L:lowbound} give that 
\begin{equation}\label{E:inq1}
\dim \theta^{g-n+r,d-n}\leq \dim \AJt_n^r(C_d)\leq \codim \theta^{\geq n+1-r,n}.
\end{equation}
Using the assumption 
$g \ge \max\{n,d-n\}$ and Proposition \ref{P:theta}\eqref{P:theta1}, we compute 
\begin{equation}\label{E:inq2}
\codim \theta^{\geq n+1-r,n}=
\begin{cases} n+1-r & \text{ if } n \leq d-n \leq g, \cr
\max\{d-n-r+1,0\} & \text{ if } d-n\leq n \leq g. \cr
\end{cases}
\end{equation}
Therefore if $d-n<r$  (which can only happen in the second case) then $\codim \theta^{\geq n+1-r,n}=0$, while if $r\leq d-n$ then 
$\codim \theta^{\geq n+1-r,n}=r(n)-r+1$. Using the upper bound in \eqref{E:inq1}, this implies that 
$\AJt_n^r(C_d)=(0)$ if $d-n<r$ (which proves \eqref{P:uplow1}) and that 
$\dim \AJt_n^r(C_d)\leq r(n)-r+1$ if $r\leq d-n$. 

On the other hand, using again the assumption 
$g \ge \max\{n,d-n\}$ and Proposition \ref{P:theta}\eqref{P:theta1}, we compute 
\begin{equation}\label{E:inq4}
\dim \theta^{\geq g-n+r,d-n}=
\begin{cases}
d+1-g-r & \text{ if } r\leq d-g, \\
0 & \text{ otherwise.  }
\end{cases}
\end{equation}
If we plug this formula into the lower bound in \eqref{E:inq1}, we get the lower bound of part \eqref{P:uplow2}, and this finishes the proof.
\end{proof}
\begin{remark}\label{R:uplow}
Note that the upper bound and lower bound in the above Proposition \ref{P:uplow} (which are always different except in the 
special cases $n=g$ or $d-n=g$, which we exclude in the discussion that follows) can be strict. 
For example:
\begin{itemize}
\item If $d<\gon(C)$ (which implies that $d\leq \frac{g+1}{2}$ by Lemma \ref{L:gonindex}) then Remark \ref{nontr}  gives that $\AJt_n^r(C_d)=\{0\}$ for any $1\leq r\leq \min\{n, d-n\}$, which shows that the lower bound 
in  \eqref{E:uplow} is (trivially) achieved but not the upper bound. 
\item The BN rays of Theorem \ref{AJtaut} do not achieve the lower bound in  \eqref{E:uplow}, which is zero since $d-g+1\leq r$, while they achieve the upper bound only if $\rho(g,r,d)=0$ or $d=g+r-1$ (see Remark \ref{R:BNperf}).
\item In each of the cases specified in Proposition \ref{P:facet}\eqref{P:facet2},  $\AJt_n^1(C_d)$ is a facet, hence its dimension achieves the upper bound in \eqref{E:uplow} but not the lower  bound.
\item We will show in the sequel that the upper bound in \eqref{E:uplow} is  achieved for any $1\leq r\leq \min\{n, d-n\}$ if either $\gon_n(C)\leq d\leq n+g$ (see Theorem \ref{thetaNef}), or if $n=g-1$ and $g\leq d \leq 2g-2$ (see Theorem \ref{T:BNfaces}), or if 
$g > \max\{n,d-n\}$ and $C$ is hyperelliptic (see Theorem \ref{T:hyper}); and in each of these cases, the lower bound is not achieved.   
\end{itemize}
\end{remark}

\section{Subordinate faces}\label{S:Neftheta}

In this section, we are going to describe some of the Abel-Jacobi faces  using subordinate varieties.
 
Recall  that the \emph{subordinate} variety of a linear system $\l$ is defined (set theoretically) as
\begin{equation}\label{D:sublocus}
\Gamma_d(\l):=\{D\in C_d: \:  D\leq E \text{ for some }E\in \l\}.
\end{equation}
There is a natural scheme structure on $\Gamma_d(\l)$ (indeed $\Gamma_d(\l)$ is a determinantal variety) and the class of $\Gamma_d(\l)$ is computed as follows (see 
\cite[Chap. VIII, \S 3]{GAC1}, \cite[\S 1]{kl2} - the proof works over any algebraically closed field).

\begin{fact}\label{subcyc}
Let $\l$ be a $\g_l^s$ on $C$ and fix an integer $d$ such that $l\geq d\geq s$.  Then $\Gamma_d(\l)$ is of pure dimension $s$ and it has class equal to
$$[\Gamma_d(\l)]=\sum_{k=0}^{d-s}\binom{l-g-s}{k} \frac{x^k\theta^{d-s-k}}{(d-s-k)!}\in R_s(C_d). $$
\end{fact}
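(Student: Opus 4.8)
\textbf{Proof proposal for Fact \ref{subcyc}.}

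The plan is to compute the class $[\Gamma_d(\l)]$ as the degeneracy locus of a bundle map and then apply the Thom--Porteous formula, exactly as in \cite[Chap. VIII, \S 3]{GAC1}. First I would recall the standard setup: fix a $\g_l^s$ given by a subspace $V\subseteq H^0(C,L)$ with $\dim V=s+1$ and $\deg L=l$. On $C_d$ consider the universal divisor $\mathcal D\subset C\times C_d$ with its two projections, and form the rank-$d$ bundle $\mathcal E_d:=(\pr_2)_*\big((\pr_1^*L)\otimes \O_{\mathcal D}\big)$ on $C_d$, whose fiber over a point $D\in C_d$ is $H^0(L|_D)\cong H^0(L/L(-D))$. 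Restriction of sections gives an evaluation map $\varphi\colon V\otimes \O_{C_d}\to \mathcal E_d$, and by definition $D\in \Gamma_d(\l)$ precisely when some nonzero section of $V$ vanishes on $D$, i.e. when $\varphi$ fails to be injective at $D$. Thus $\Gamma_d(\l)$ is the locus where $\varphi$ has rank $\le s$ (drops rank by one), so it is the $s$-th degeneracy locus of a map from a trivial rank-$(s+1)$ bundle to a rank-$d$ bundle; its expected codimension is $(d-(s+1)+1)(\, (s+1)-s\,)=d-s$, matching the claimed pure dimension $s$. That the actual dimension equals the expected one (so that Thom--Porteous applies with no excess) is the classical transversality statement for subordinate loci, which I would cite from \cite[Chap. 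VIII, \S 3]{GAC1}; this transversality, valid over any algebraically closed field, is the one genuinely nontrivial input.

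Next I would carry out the Chern class computation. By Thom--Porteous, $[\Gamma_d(\l)]=\Delta^{(d-s)}_{1}\big(c(\mathcal E_d - V\otimes\O)\big)$, which for a corank-one locus collapses to the single Schur polynomial $c_{d-s}(\mathcal E_d)$ (since $V\otimes\O$ is trivial, $c(\mathcal E_d - V\otimes\O)=c(\mathcal E_d)$). So the task reduces to computing $c_{d-s}(\mathcal E_d)\in R^{d-s}(C_d)=R_s(C_d)$. The total Chern class of $\mathcal E_d$ is well known in the tautological ring: one has (see \cite[Chap. VIII]{GAC1}) $c_t(\mathcal E_d)=\frac{(1+\text{(point class)}\,t)^{\,?}}{\dots}$; concretely, using $c_1(\mathcal E_d)=$ a combination of $x$ and $\theta$ and the Grothendieck--Riemann--Roch / Poincaré-formula computation on $C_d$, the generating function works out to
$$
c_t(\mathcal E_d)=(1-x\,t)^{\,l-g-s}\exp\!\big(\text{(correction involving }\theta)\big),
$$
and extracting the coefficient of $t^{d-s}$ yields a sum $\sum_{k}\binom{l-g-s}{k}\frac{x^k\theta^{d-s-k}}{(d-s-k)!}$. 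I would instead organize this more cleanly by recalling the precise formula for $c_t(\mathcal E_d)$ already available in the literature (it is the same bundle whose Chern classes give the $\g^s_l$-subordinate formulas in \cite[Chap. VIII, \S 3]{GAC1} and \cite[\S 1]{kl2}), so that the extraction of the degree-$(d-s)$ part is a purely formal manipulation of power series in $x$ and $\theta$ modulo the relation $\theta^{g+1}=0$ (Proposition \ref{basetaut}\eqref{basetaut1}).

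Finally I would check the edge cases of the range $l\ge d\ge s$: when $d=s$, $\Gamma_s(\l)$ consists of the divisors of degree $s$ contained in members of $\l$, and the formula gives $[\Gamma_s(\l)]=\binom{l-g-s}{0}\frac{x^0\theta^0}{0!}=1$ as it must (it is all of $C_s$ when $\l$ is a pencil's worth, more precisely the formula correctly records the zero-dimensional normalization); when $l=d$, $\Gamma_d(\l)\cong |\,\l\,|\cong \PP^s$ embedded as a fiber-type cycle and one verifies the coefficient of $\theta^{d-s}$ is $\frac{1}{(d-s)!}$, consistent with $[\PP^s]$ computed via Proposition \ref{basetaut}. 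I expect the main obstacle to be purely expository: pinning down the exact closed form of $c_t(\mathcal E_d)$ with the right normalization of $x$ versus $\theta$ so that the binomial $\binom{l-g-s}{k}$ and the factorial $\frac{1}{(d-s-k)!}$ come out precisely as stated — the geometry (degeneracy-locus description plus the classical transversality) is standard and can be quoted wholesale from \cite[Chap. VIII, \S 3]{GAC1}.
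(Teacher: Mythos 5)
Your route is the same as the paper's: the paper gives no independent argument for this Fact but simply cites \cite[Chap. VIII, \S 3]{GAC1} and \cite[\S 1]{kl2}, and what you sketch (evaluation map $V\otimes\O_{C_d}\to\mathcal E_d$, corank-one degeneracy locus, Thom--Porteous) is precisely that classical proof. The one substantive point to repair is your placeholder Chern class formula: $c_t(\mathcal E_d)$ cannot have the form $(1-xt)^{\,l-g-s}\exp(\cdots)$, because $\mathcal E_d$ depends only on $L$ and $d$, not on $s$. The correct closed form, obtained by Grothendieck--Riemann--Roch for $\pr_2\colon C\times C_d\to C_d$ (valid in any characteristic), is
$$c_t(\mathcal E_d)=(1-xt)^{\,d+g-l-1}\exp\!\left(\frac{t\,\theta}{1-xt}\right),$$
whose coefficient of $t^{d-s}$ equals $\sum_{k=0}^{d-s}(-1)^k\binom{g-1-l+s+k}{k}x^k\frac{\theta^{d-s-k}}{(d-s-k)!}=\sum_{k=0}^{d-s}\binom{l-g-s}{k}x^k\frac{\theta^{d-s-k}}{(d-s-k)!}$; the constant binomial top $l-g-s$ appears only after this reindexing, not in the generating function itself. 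Also, the purity claim needs no transversality input: every component of the corank-one locus has dimension at least $\dim C_d-(d-s)=s$ by the determinantal bound, while the incidence variety $\{(D,E)\in C_d\times\l:\ D\leq E\}$ is finite over $\l\cong\PP^s$, so its image $\Gamma_d(\l)$ has dimension at most $s$; hence $\Gamma_d(\l)$ is pure of dimension $s$ for every $\l$, and Thom--Porteous applies, the remaining identification of the Porteous class with $[\Gamma_d(\l)]$ (for its determinantal scheme structure) being exactly what the quoted references supply.
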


Using subordinate varieties, we  construct  subvarieties of $C_d$ that are suitably contracted by the 
Abel-Jacobi map $\alpha_d:C_d\to \Pic^d(C)$.

\begin{prop}\label{cyclecontr}
Let $1\leq n\leq d$ with the property that  $d\geq \gon_n(C)$. Fix a linear system $\l$ of degree $d$ and dimension $n$ on $C$. For any $0 \leq i \leq \min\{n, g\}$, consider the embedding $\psi_i : C_{d-i} \to C_d$ defined by $\psi_i(D)=D+ip_0$, where $p_0$ is a fixed point 
of $C$. Then the subvariety 
$$\Gamma_i:=\psi_i(\Gamma_{d-i}(\l)) \subseteq C_d$$ 
has pure dimension $n$, its class is tautological and  equal to 
\begin{equation}\label{E:classT}
[\Gamma_i]=\sum_{k=0}^{d-i-n}\binom{d-g-n}{k} \frac{x^{k+i}\theta^{d-i-n-k}}{(d-i-n-k)!},
\end{equation}
and its image $\alpha_d(\Gamma_i)$ in $\Pic^d(C)$ is irreducible of dimension $i$. 
\end{prop}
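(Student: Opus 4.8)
The strategy is to decompose the statement into three independent assertions---the dimension of $\Gamma_i$, the shape of its tautological class, and the dimension of its image under $\alpha_d$---and to handle each by transporting known facts about $\Gamma_{d-i}(\l)$ through the embedding $\psi_i$.

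First I would establish that $\dim \Gamma_i = n$. The hypothesis $d \ge \gon_n(C)$ guarantees that a $\g_d^n$ exists, so the linear system $\l$ makes sense, and by Remark \ref{R:r-gon} we have $d - n \ge \min\{n,g\} \ge i$, so $C_{d-i}$ is a nonempty symmetric product and $\psi_i$ is a well-defined closed embedding. Since $\l$ has degree $d$ and dimension $n$ with $d \ge n$ (indeed $d \ge \gon_n(C) \ge 2n \ge n$ by Lemma \ref{L:gonindex} when $n \le g-2$, and directly from \eqref{gonlarge} otherwise), Fact \ref{subcyc} applies with $l = d$, $s = n$: it gives that $\Gamma_{d-i}(\l) \subseteq C_{d-i}$ has pure dimension $n$. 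Because $\psi_i$ is an embedding, $\Gamma_i = \psi_i(\Gamma_{d-i}(\l))$ has the same pure dimension $n$.

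Next, the class computation. By Fact \ref{subcyc}, with $l=d$ and $s=n$ applied to $C_{d-i}$,
$$[\Gamma_{d-i}(\l)] = \sum_{k=0}^{(d-i)-n} \binom{d-g-n}{k}\, \frac{x^k \theta^{(d-i)-n-k}}{((d-i)-n-k)!} \in R_n(C_{d-i}).$$
I would then push this forward along $\psi_i$. The map $\psi_i = i_{p_0}$ iterated $i$ times sends an $n$-dimensional cycle on $C_{d-i}$ to an $n$-dimensional cycle on $C_d$; on tautological classes the effect of $(\psi_i)_*$ is the standard one coming from the geometry of the divisor $X_{p_0}$: one has $(\psi_i)_*(\theta|_{C_{d-i}}) $ and $(\psi_i)_*(x|_{C_{d-i}})$ controlled by the pullback formulas $\psi_i^* x = x$, $\psi_i^* \theta = \theta$ (up to the translation $t_{p_0}$ which does not change $\Theta$ numerically), together with $(\psi_i)_*(1) = x^i$ by the projection/excess formula since $\psi_i(C_{d-i}) = X_{p_0}^{(i)}$ has class $x^i$. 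Hence $(\psi_i)_*(x^k \theta^{(d-i)-n-k}) = x^{k+i}\theta^{(d-i)-n-k}$, and substituting into the displayed sum yields exactly \eqref{E:classT}. (Alternatively, and perhaps more cleanly, one can argue directly: $\Gamma_i = \Gamma_d(\l') \cap \{D : D \ge ip_0\}$ is itself a determinantal locus, or simply note $\Gamma_i$ is the subordinate variety $\Gamma_{d}(\l(ip_0))$ of the $\g_{d+i}^n$ obtained by adding $ip_0$ to $\l$, reduced along... ---but the pushforward argument is the most transparent.) In all cases the class is manifestly tautological, being a polynomial in $x$ and $\theta$.

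Finally, the image. Since $\psi_i$ is compatible with the Abel-Jacobi morphisms via $\alpha_d \circ \psi_i = (t_{p_0}^{\circ i}) \circ \alpha_{d-i}$, and $t_{p_0}^{\circ i}$ is an isomorphism of $\Pic^{d-i}(C)$ onto $\Pic^d(C)$, we get $\alpha_d(\Gamma_i) \cong \alpha_{d-i}(\Gamma_{d-i}(\l))$. Now $\Gamma_{d-i}(\l)$ maps to $\Pic^{d-i}(C)$ with image $\{\O_C(D) : D \le E \text{ for some } E \in \l\}$; fixing $E$, the divisors $D \le E$ with $\deg D = d-i$ form a $\PP^i$ (a subsystem of the complete system $|E|$ restricted appropriately), but varying over the $n$-dimensional family $\l$ and recording only $\O_C(D) = \O_C(E)(-F)$ where $F = E - D$ has degree $i$, one sees $\alpha_{d-i}(\Gamma_{d-i}(\l))$ is the image of $\l \times C_i \to \Pic^{d-i}(C)$, $(E,F) \mapsto \O_C(E)(-F)$. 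The source is irreducible of dimension $n + i$, so the image is irreducible; its dimension is $i$ because, after fixing a general $E \in \l$, the fiber contains the full $n$-dimensional family of $E' \in \l$ with $\O_C(E')(-F') = \O_C(E)(-F)$ for appropriate $F'$---equivalently the generic fiber has dimension $n$ (this is where one uses that $\Gamma_{d-i}(\l)$ itself has dimension $n$ and $\alpha_d$ restricted to it has image of dimension equal to $n - n = 0$ when $i = 0$ and more generally one reads off the fiber dimension from the $\PP^i$-bundle structure). I would make this precise by noting that the generic fiber of $\alpha_d|_{\Gamma_i}$ is $|L|$-related: a general point of $\alpha_d(\Gamma_i)$ is $L = \O_C(E)(-ip_0)$ for a general $E \in \l$, and its preimage in $\Gamma_i$ contains $\psi_i(\{D + (E-D) \text{ generic}\})$, which has dimension $n$; since $\dim \Gamma_i = n$, the image has dimension $0$ when... ---here I must be careful: the cleanest route is that $\alpha_d(\Gamma_i) = t_{p_0}^{\circ i}(\alpha_{d-i}(\Gamma_{d-i}(\l)))$ and the latter, by the determinantal/incidence description, is precisely the translate of $\{L' \in \Pic^{d-i-?}\}$...

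\emph{The main obstacle} will be the last point: pinning down that $\dim \alpha_d(\Gamma_i) = i$ exactly (not just $\le \min\{i, \dim\Gamma_i\}$). The resolution is to observe that $\Gamma_{d-i}(\l) \to \alpha_{d-i}(\Gamma_{d-i}(\l))$ has $n$-dimensional generic fiber: over a general $L' \in \alpha_{d-i}(\Gamma_{d-i}(\l))$, writing $L' = \O_C(D)$ with $D \le E$, $E \in \l$, the fiber contains all $D'$ with $\O_C(D') = L'$, i.e. the complete linear system $|L'|$, and one checks $\dim|L'| = n$ for generic such $L'$ because the generic divisor $D \le E$ of degree $d-i$ has $h^0(\O_C(D)) = n+1$ (subtracting $i$ general points from a $\g_d^n$ drops the dimension by exactly $i$... no---one adds back: the point is that a general degree-$(d-i)$ subdivisor of a general $E \in \g_d^n$ moves in the same $\g^{n-i}$, so generic fiber has dimension $n-i$, giving $\dim = (d-i\text{-variety of dim }n) - (n-i) = i$). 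I would verify this fiber-dimension claim via the base-point-free pencil trick or directly by a dimension count on the incidence variety $\{(E,D) : D \le E, E \in \l\} \to \Pic^{d-i}(C)$, whose total space is a $\PP^i$-bundle over $\l$ hence has dimension $n+i$, and whose generic fiber over its image has dimension $n$; this forces the image to have dimension $i$, and pushing through $\psi_i$ and $t_{p_0}^{\circ i}$ gives $\dim \alpha_d(\Gamma_i) = i$, with irreducibility inherited from the irreducibility of the $\PP^i$-bundle over the irreducible $\l \cong \PP^n$.
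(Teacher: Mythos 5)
Your handling of the first two assertions is correct and essentially the same as the paper's: $i\le d-n$ comes from Remark \ref{R:r-gon}, Fact \ref{subcyc} (applied with $l=d$, $s=n$ on $C_{d-i}$) gives that $\Gamma_{d-i}(\l)$ is pure of dimension $n$, and the class formula \eqref{E:classT} follows from the projection formula for $\psi_i$, using $\psi_i^*x=x$, $\psi_i^*\theta=\theta$ (translation does not move $\Theta$ numerically) and $(\psi_i)_*[C_{d-i}]=x^i$.

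The last assertion, that $\alpha_d(\Gamma_i)$ is irreducible of dimension exactly $i$, is where your argument has a genuine gap, and you partly sense it yourself. The dimension count you finally settle on rests on a false structural claim: the incidence variety $\{(E,D):\,E\in\l,\ D\le E,\ \deg D=d-i\}$ is \emph{not} a $\PP^i$-bundle over $\l$. For a fixed effective divisor $E$, the subdivisors $D\le E$ of degree $d-i$ form a finite set (for $E$ reduced, $\binom{d}{i}$ points), so this incidence variety has dimension $n$, not $n+i$, and it is generically finite over $\l$; correspondingly its generic fiber over the image in $\Pic^{d-i}(C)$ has dimension $n-i$, not $n$. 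With the correct numbers one could still get $\dim=i$, but only after independently establishing the fiber dimension $n-i$, which you gesture at but never prove (your text oscillates between fiber dimensions $n$ and $n-i$ and is left unresolved). Moreover, the two places where the hypothesis $i\le\min\{n,g\}$ must enter are missing: first, $i\le n$ is needed to see that \emph{every} $F\in C_i$ satisfies $F\le E$ for some $E\in\l$ (imposing $i$ linear conditions on the $n$-dimensional system $\l$), which is exactly what identifies $\alpha_d(\Gamma_i)$ with the whole set $V_i=\{L(-F+ip_0):\,F\in C_i\}$, giving irreducibility as the image of the irreducible $C_i$; without it you only have $\alpha_d(\Gamma_i)\subseteq V_i$, hence only $\dim\le i$, and your identification of $\alpha_{d-i}(\Gamma_{d-i}(\l))$ with the image of all of $\l\times C_i$ is unjustified. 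Second, $i\le g$ is needed so that $F\mapsto L(-F+ip_0)$ is generically injective (a general effective divisor of degree $i\le g$ has $h^0=1$), so that $\dim V_i$ is exactly $i$ rather than $\min\{i,g\}$. The paper's proof is a short double inclusion $\alpha_d(\Gamma_i)=V_i$, using $\dim\l=n\ge i$ for the inclusion $V_i\subseteq\alpha_d(\Gamma_i)$, with $V_i$ irreducible of dimension $i$ as the image of $C_i$; your argument needs to be repaired along these lines.
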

Note that the subvarieties $\Gamma_i$ depend on the choices of the linear system $\l$ and of the base point $p_0$, but their classes $[\Gamma_i]$ are independent of these choices. 
\begin{proof}
Note that  $\min\{n,g\}\leq d-n$ by Remark \ref{R:r-gon}, whence we have that $i \le d-n$.  Fact \ref{subcyc} implies that $\Gamma_{d-i}(\l)$ is pure $n$-dimensional, whence so is $\Gamma_i$. Moreover since the image of $\psi_i$ has class equal to $x^i$ and the 
pull-back map $\psi_i^*$ preserves the classes $x$ and $\theta$, the  class of $\Gamma_i$ is obtained by taking the class  of $\Gamma_{d-i}(\l)$ in $R_n(C_{d-i})$ given by Fact \ref{subcyc}, interpreting it as a class in $R_{n+i}(C_d)$ and then multiplying it for $x^i$; in this way we get the formula \eqref{E:classT}.  

The linear system $\l$ is a sublinear system of a complete linear system $|L|$ for some  $L\in \Pic^d(C)$. Consider the $i$-dimensional irreducible subvariety of $\Pic^d(C)$: 
$$V_i:=\{L(-D+ip_0): \: D\in C_{i}\}.$$
We claim that $\alpha_d(\Gamma_i) = V_i$, which will conclude the proof.  In fact if $\L \in \alpha_d(\Gamma_i)$ then there is $D' \in \Gamma_{d-i}(\l)$ such that $\L \cong \O_C(D'+ip_0)$. But there is also $E \in \l$ such that $E \ge D'$, whence, setting $D = E - D'$ we see that $D \in C_i$ and $\L \cong L(-D+ip_0) \in V_i$. Vice versa if $\L \in V_i$ then $\L \cong L(-D+ip_0)$ for some $D \in C_i$. Since $\dim \l = n \ge i$ there is $E \in \l$ such that $E \ge D$. Setting $D' = E - D$ we find that $D' \in C_{d-i}$ and $D' \le E$, so that $D' \in \Gamma_{d-i}(\l)$, $D'+ip_0 \in \Gamma_i$ and $\alpha_d(D'+ip_0) = \O_C(D'+ip_0) \cong \L$.
\end{proof} 

The intersection of the classes $[\Gamma_i]$ with the monomials $\theta^j x^{n-j}$ is easily computed via the projection formula as follows.

\begin{lemma}\label{intSigma}
Let $Z$ be any pure $n$-dimensional subvariety of $C_d$ such that $\dim \alpha_d(Z)=i$. Then 
$$[Z]\cdot \theta^j x^{n-j}=
\begin{cases}
0 & \text{ if }i< j,\\
>0 & \text{ if } i \geq j.  
\end{cases}
$$
\end{lemma}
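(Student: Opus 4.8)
\textbf{Proof plan for Lemma \ref{intSigma}.}

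The plan is to use the projection formula for the morphism $\alpha_d \colon C_d \to \Pic^d(C)$, exploiting that $\theta = \alpha_d^*([\Theta])$ is a pull-back class. Write $Z' := \alpha_d(Z) \subseteq \Pic^d(C)$, which is an irreducible subvariety of dimension $i$, and let $[\Theta]$ be a theta divisor on $\Pic^d(C)$, which is ample. First I would apply the projection formula to get
$$
[Z] \cdot \theta^j \cdot x^{n-j} = [Z] \cdot \alpha_d^*([\Theta]^j) \cdot x^{n-j} = (\alpha_d)_*\big([Z] \cdot x^{n-j}\big) \cdot [\Theta]^j.
$$
Now $x$ is ample on $C_d$ (Nakai–Moishezon, as recalled in \S\ref{SS:tautring}), so $[Z] \cdot x^{n-j}$ is represented by an effective cycle supported on $Z$ of dimension $j$; in fact, intersecting $Z$ with $n-j$ general members of the very ample linear system defining a multiple of $x$, we may take it to be an effective $j$-cycle whose support is a general $j$-dimensional linear section of $Z$.

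If $i < j$, then for dimensional reasons $(\alpha_d)_*([Z]\cdot x^{n-j})$ is a $j$-cycle supported on the at-most-$i$-dimensional variety $Z'$, hence is zero as a cycle class in $N_j(\Pic^d(C))$ (any $j$-cycle supported on a variety of dimension $< j$ is numerically trivial). Therefore the intersection number vanishes. If $i \geq j$, I would argue positivity as follows: $(\alpha_d)_*([Z]\cdot x^{n-j})$ is a nonzero effective $j$-cycle on $\Pic^d(C)$, since a general $j$-dimensional linear section of $Z$ dominates, or at least has image of dimension $\min\{j,i\} = j$ in $Z'$ under $\alpha_d$ (here we use $i \ge j$ together with the fact that generic linear sections behave well with respect to the fibers of $\alpha_d|_Z$, exactly as in the claim in the proof of Proposition \ref{extr-crit}). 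Being a nonzero effective $j$-cycle, its intersection with the ample class $[\Theta]^j$ is strictly positive.

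The main obstacle is the positivity half: one must ensure that a general $j$-dimensional linear section $W$ of $Z$ (cut out by $n-j$ general hyperplanes in a projective embedding given by $x$) has $\dim \alpha_d(W) = j$ when $i = \dim\alpha_d(Z) \geq j$, rather than collapsing further. This follows from the standard fact that cutting by a general hyperplane drops the dimension of both $Z$ and $\alpha_d(Z)$ by exactly one as long as $\alpha_d(Z)$ is positive-dimensional — one picks the hyperplane to meet the open locus of $Z'$ over which $\alpha_d|_Z$ has fibers of the generic (minimal) dimension $n - i$, exactly the argument used in the proof of Proposition \ref{extr-crit}\eqref{extr-crit1}. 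Iterating $n-j$ times reduces $\dim Z$ from $n$ to $j$ and $\dim\alpha_d(Z)$ from $i$ to $\max\{i-(n-j),\, \text{something}\}$; since $i \le n$, after $n-j$ cuts the image still has dimension $\min\{j, i\} = j$, so $W$ dominates a $j$-dimensional subvariety of $\Pic^d(C)$ and $(\alpha_d)_*[W] \ne 0$. Once this is in place, pairing with $[\Theta]^j$ and invoking ampleness finishes the proof.
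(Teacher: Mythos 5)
Your argument is correct and is essentially the paper's own proof: both push the intersection number forward via the projection formula $[Z]\cdot\theta^j x^{n-j}=(\alpha_d)_*([Z]\cdot x^{n-j})\cdot[\Theta]^j$, represent $[Z]\cdot x^{n-j}$ by a general $j$-dimensional section of $Z$ whose image under $\alpha_d$ has dimension $\min\{i,j\}$, and conclude by dimension reasons (pushforward is numerically trivial) when $i<j$ and by ampleness of $\Theta$ when $i\ge j$. The only cosmetic difference is that the paper argues component by component, since $Z$ is merely pure-dimensional and $\alpha_d(Z)$ need not be irreducible as you tacitly assume; your argument adapts verbatim because every component contributes non-negatively and the component realizing $\dim\alpha_d(Z_k)=i$ contributes positively.
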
  
\begin{proof}
Observe that,  since $[Z]\cdot \theta^j x^{n-j}\in N_0(C_d)\cong \R$, we have that $[Z]\cdot \theta^j x^{n-j}=(\alpha_d)_*([Z]\cdot \theta^j x^{n-j})\in N_0(\Pic^d(C))\cong \R$. In order to compute 
the last quantity, we use the projection formula for the Abel-Jacobi map $\alpha_d$:
$$(\alpha_d)_*([Z]\cdot \theta^j x^{n-j})=(\alpha_d)_*([Z]\cdot x^{n-j})\cdot [\Theta]^j.$$
Since $x$ is an ample class on $C_d$, for each irreducible component $Z_k$ of $Z$, the class $[Z_k]\cdot x^{n-j}$ can be represented by a $j$-dimensional irreducible subvariety $W_k$ contained in $Z$ such that $\dim \alpha_d(W_k)=  \min\{\dim \alpha_d(Z_k),j\}$. Passing to the pushforward, we get 
$$(\alpha_d)_*([Z_k] \cdot x^{n-j}) = (\alpha_d)_*([W_k])=
\begin{cases}
0 & \text {if } \dim \alpha_d(Z_k) < j , \\
\deg ((\alpha_d)_{|W_k}) \cdot [\alpha_d(W_k)] & \text{if } \dim \alpha_d(Z_k) \geq j.
\end{cases}
$$
Since $\dim \alpha_d(Z)=i$ we get that $\dim \alpha_d(Z_k) \le i$ for every $k$ and there is a $k_0$ such that $\dim \alpha_d(Z_{k_0}) = i$. 
We conclude by observing that, in the case $j\leq i$, we have that $[\alpha_d(W_{k_0})]\cdot [\Theta]^j>0$ because $\dim \alpha_d(W_{k_0}) = j$ and $\Theta$ is ample on $\Pic^d(C)$.  
\end{proof}

\begin{cor}\label{C:Sigmaper}
Let $0\leq n\leq d$ such that $d-n\geq \min\{n,g\}$ and let $\{Z_i\}_{i=0}^{\min\{n,g\}}$ be pure $n$-dimensional subvarieties of $C_d$ such that $\dim \alpha_d(Z_i)=i$. Then the classes $\{[Z_0],\ldots,[Z_{\min\{n,g\}}]\}$ are linearly independent in $N_n(C_d)$ and we have that 
$$\langle [Z_0],\ldots,[Z_i] \rangle^{\perp}\cap R^n(C_d)= \theta^{\geq i+1, n}$$
has codimension $i+1$ in $R^n(C_d)$, for every $0\leq i \leq \min\{n,g\}$. 
\end{cor}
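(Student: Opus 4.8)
The plan is to deduce everything from Lemma \ref{intSigma}, Proposition \ref{P:theta}, and the non-degeneracy of the intersection pairing (Proposition \ref{basetaut}\eqref{basetaut5}). First I would record the key input: by Lemma \ref{intSigma}, for each $i$ and each $0\le j\le \min\{n,g\}$ the pairing $[Z_i]\cdot \theta^j x^{n-j}$ vanishes if $i<j$ and is strictly positive if $i\ge j$. Since the monomials $\{\theta^j x^{n-j}\}_{j=0}^{\min\{n,g\}}$ span $R^{n}(C_d)$ (Proposition \ref{basetaut}\eqref{basetaut4}, using $d-n\ge\min\{n,g\}$ so that $r(n)=\min\{n,g\}$), the orthogonal of $[Z_i]$ inside $R^n(C_d)$ is exactly the hyperplane cut out by $\theta^i x^{n-i}$ among these basis vectors (up to the lower terms which pair to zero). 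The crucial observation is that the ``pairing matrix'' $M_{ij}=[Z_i]\cdot\theta^j x^{n-j}$ is lower triangular with nonzero diagonal entries, hence invertible; this immediately gives linear independence of $[Z_0],\ldots,[Z_{\min\{n,g\}}]$ in $N_n(C_d)$ (a linear dependence would force a nonzero vector in the kernel of $M$ after pairing against the $\theta^j x^{n-j}$, contradicting the faithfulness of $R^n(C_d)$ on $R_n(C_d)$ restricted to these classes, which holds because these monomials span $R^n(C_d)$).

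Next, for the orthogonality statement, fix $i$. I would show $\langle[Z_0],\ldots,[Z_i]\rangle^\perp\cap R^n(C_d)=\theta^{\ge i+1,n}$ by a double inclusion. For ``$\supseteq$'': any element of $\theta^{\ge i+1,n}$ is (by Proposition \ref{P:theta}\eqref{P:theta1}, with $r(n)=\min\{n,g\}$) a linear combination of $\theta^k x^{n-k}$ with $k\ge i+1$; pairing any such monomial against $[Z_\ell]$ for $\ell\le i$ gives zero by Lemma \ref{intSigma} since $\ell\le i<k$. For ``$\subseteq$'': take $\alpha\in R^n(C_d)$ orthogonal to $[Z_0],\ldots,[Z_i]$ and write $\alpha=\sum_{k=0}^{\min\{n,g\}} a_k\,\theta^k x^{n-k}$. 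Using the lower-triangular, nonzero-diagonal structure of $M$, the equations $\alpha\cdot[Z_\ell]=0$ for $\ell=0,1,\ldots,i$ force, inductively, $a_0=0$, then $a_1=0$, \ldots, then $a_i=0$; hence $\alpha\in\theta^{\ge i+1,n}$. Finally the codimension statement is just Proposition \ref{P:theta}\eqref{P:theta1}: with $r(n)=\min\{n,g\}$ (which is the case $r(m)=m$ or $r(m)=g$ in the terminology there, since $d-n\ge\min\{n,g\}$ means $r(n)\ne d-n$ unless equality, and in the boundary case the formula still gives codimension $i+1$), we get $\codim\theta^{\ge i+1,n}=i+1$.

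I anticipate that the only genuinely delicate point is the bookkeeping when $d-n=\min\{n,g\}$, i.e. the boundary of the hypothesis, where one must be slightly careful that the relevant monomials still form a basis and that Proposition \ref{P:theta}\eqref{P:theta1} still yields codimension $i+1$ rather than something smaller; I would check this directly against the cases listed in Proposition \ref{P:theta}\eqref{P:theta1}. Everything else is formal linear algebra built on the triangularity furnished by Lemma \ref{intSigma}.
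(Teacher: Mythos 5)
Your proposal is correct and follows essentially the same route as the paper: both rest on the triangularity furnished by Lemma \ref{intSigma} together with the fact that, since $d-n\geq\min\{n,g\}$, the monomials $\theta^jx^{n-j}$, $0\leq j\leq \min\{n,g\}$, form a basis of $R^n(C_d)$ (Proposition \ref{basetaut}\eqref{basetaut4}), with the codimension read off from Proposition \ref{P:theta}\eqref{P:theta1}; the paper merely phrases the linear independence as the case $i=r(n)$ of the orthogonality statement, whereas you invert the triangular pairing matrix directly, which is the same computation. (Only cosmetic caveat: in your aside about the hyperplane orthogonal to a single $[Z_i]$, it is the \emph{higher} monomials $\theta^jx^{n-j}$, $j>i$, that pair to zero, not the lower ones; your actual double-inclusion and induction use the correct triangularity, so nothing is affected.)
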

\begin{proof}
The space $R^n(C_d)$ is freely generated by $\{\theta^0x^n,\ldots, \theta^{r(n)}x^{n-r(n)}\}$ by Proposition \ref{basetaut}\eqref{basetaut4}, where $r(n)=\min\{n,g\}$ because of the assumption on $d$. 
Now Lemma \ref{intSigma} implies that 
$$\langle [Z_0],\ldots,[Z_i] \rangle^{\perp}\cap R^n(C_d)= \theta^{\geq i+1, n} \quad \text{ for any } \quad 0\leq i\leq r(n).$$
The subspace $\theta^{\geq i+1, n}\subset R^n(C_d)$ has codimension $i+1$ by Proposition \ref{P:theta}\eqref{P:theta1}.  
If we apply this result to $i=r(n)$ we deduce that the classes $\{[Z_0],\ldots,[Z_{\min\{n,g\}}]\}$ are linearly independent in $N_n(C_d)$ and this concludes the proof.  
\end{proof}

Using the subvarieties in Proposition \ref{cyclecontr}, we can now describe tautological Abel-Jacobi faces under suitable numerical assumptions. 

\begin{thm}\label{thetaNef}
Let $0 \le n \le d, 1+\max\{0,n-g\}\leq r \leq n$ and assume that  $d \ge \gon_n(C)$.
For any $0\leq i \leq \min\{n,g\}$, consider the classes $[\Gamma_i]\in R_n(C_d)$ given by \eqref{E:classT} and set $$\Sigma_{i+1}:=\langle [\Gamma_0],\ldots,[\Gamma_{i}]\rangle \subset R_n(C_d).$$ 
Then $\AJt_n^r(C_d)$ is a non-trivial face, is equal to $\Sigma_{n+1-r}\cap \Psefft_n(C_d)$ and it is a perfect face of dimension $n+1-r$. Hence, the following chain 
\begin{equation}\label{E:chain-sub}
\Sigma_1\cap \Psefft_n(C_d) \subset \Sigma_2\cap \Psefft_n(C_d) \subset \ldots \subset \Sigma_{\min\{n,g\}} \cap \Psefft_n(C_d)\subset \Psefft_n(C_d)
\end{equation}
is a maximal chain of perfect non-trivial faces of $\Psefft_n(C_d)$.
The dual chain of the chain in \eqref{E:chain-sub} is equal to
\begin{equation}\label{E:nef-sub}
\theta^{\ge \min\{n,g\}}\cap \Neft^n(C_d)\subset \theta^{\ge \min\{n,g\}-1} \cap \Neft^n(C_d) \subset \ldots \subset \theta^{\ge 1} \cap \Neft^n(C_d) \subset \Neft^n(C_d).
\end{equation}
\end{thm}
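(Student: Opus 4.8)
## Proof Proposal for Theorem \ref{thetaNef}

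The plan is to combine the general dimension bounds already established (Proposition \ref{P:AJtheta} and Lemma \ref{L:lowbound}) with the explicit geometry of the subordinate-type subvarieties $\Gamma_i$ constructed in Proposition \ref{cyclecontr}. The key point is that, under the hypothesis $d \ge \gon_n(C)$, Remark \ref{R:r-gon} gives $r(n) = \min\{n,g\}$, so $d-n \ge \min\{n,g\}$ and all the subvarieties $\Gamma_0, \dots, \Gamma_{\min\{n,g\}}$ are defined. First I would record that each $\Gamma_i$ has contractibility index exactly $\dim \Gamma_i - \dim \alpha_d(\Gamma_i) = n - i$ (using Proposition \ref{cyclecontr}), so $[\Gamma_i] \in \AJt_n^{n-i}(C_d)$ for $0 \le i \le n-1$; reindexing, $[\Gamma_0], \dots, [\Gamma_{n-r}]$ all lie in $\AJt_n^r(C_d)$ and hence $\Sigma_{n+1-r} \cap \Psefft_n(C_d) \subseteq \AJt_n^r(C_d)$. (One must also check that these monomial-supported classes $[\Gamma_i]$ are genuinely pseudoeffective, which is immediate since $\Gamma_i$ is an actual subvariety.)

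Next I would establish the reverse containment and the dimension count simultaneously. By Proposition \ref{P:AJtheta}\eqref{P:AJtheta1}, $\AJt_n^r(C_d) = (\theta^{\ge n+1-r,n})^\perp \cap \Psefft_n(C_d)$, and $\dim \AJt_n^r(C_d) \le \codim \theta^{\ge n+1-r,n}$. Since $d-n \ge \min\{n,g\} = r(n)$, Proposition \ref{P:theta}\eqref{P:theta1} gives $\codim \theta^{\ge n+1-r,n} = n+1-r$ (we are in one of the first two cases of that formula, $r(m)=m$ or $r(m)=g$). On the other hand, Corollary \ref{C:Sigmaper} applied to $Z_i = \Gamma_i$ shows the classes $[\Gamma_0],\dots,[\Gamma_{\min\{n,g\}}]$ are linearly independent, so $\dim \Sigma_{n+1-r} = n+1-r$ (here one needs $n-r \le \min\{n,g\}-1$, equivalently $r \ge \max\{1, n+1-g\}$, which is exactly the standing hypothesis on $r$). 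Combining, $n+1-r = \dim \Sigma_{n+1-r} \le \dim \AJt_n^r(C_d) \le n+1-r$, forcing equality throughout; in particular $\AJt_n^r(C_d)$ is non-trivial (its dimension is $\ge 1$ since $r \le n$), it equals $\Sigma_{n+1-r} \cap \Psefft_n(C_d)$, and $\langle \AJt_n^r(C_d)\rangle = (\theta^{\ge n+1-r,n})^\perp$.

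With the dimension equality in hand, Proposition \ref{P:AJtheta}\eqref{P:AJtheta2} immediately yields that $\AJt_n^r(C_d)$ is a perfect face of $\Psefft_n(C_d)$ with perfect dual face $\theta^{\ge n+1-r,n} \cap \Neft^n(C_d)$ (the nef $\theta$-face). The chain \eqref{E:chain-sub} is then obtained by letting $r$ run from $n$ down to $\max\{1, n+1-g\} = n+1-\min\{n,g\}$: the dimensions are $1, 2, \dots, \min\{n,g\}$, which increase by one at each step and reach $\dim \Psefft_n(C_d) - 1 = r(n) = \min\{n,g\}$ (using again $r(n)=\min\{n,g\}$ from Remark \ref{R:r-gon}, so $\dim R_n(C_d) = \min\{n,g\}+1$ by Proposition \ref{basetaut}\eqref{basetaut4}); this is precisely a maximal chain of perfect non-trivial faces. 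Taking duals via Remark \ref{perf}/Proposition \ref{P:AJtheta}\eqref{P:AJtheta2} term by term gives the dual chain \eqref{E:nef-sub}, with the reverse inclusions because $\theta^{\ge i+1,n} \subseteq \theta^{\ge i,n}$.

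The main obstacle I anticipate is not any single deep step but the careful bookkeeping of the numerical hypotheses: one must verify at each stage that $d \ge \gon_n(C)$ does force $r(n) = \min\{n,g\}$ (so that Proposition \ref{P:theta}\eqref{P:theta1} lands in the clean case and $\dim R_n(C_d)$ is as claimed), that the index $i = n-r$ stays within $[0, \min\{n,g\}]$ so that $\Gamma_i$ is defined and Corollary \ref{C:Sigmaper} applies, and that the contractibility index of $\Gamma_i$ is exactly $n-i$ rather than merely $\ge n-i$ — this last point uses that $\alpha_d(\Gamma_i) = V_i$ has dimension exactly $i$, which is part of Proposition \ref{cyclecontr}. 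Everything else is a formal consequence of the machinery in Section \ref{S:AJ} and \S\ref{SS:theta}.
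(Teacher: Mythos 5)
Your proposal is correct and follows essentially the same route as the paper's proof: identify $(\theta^{\geq n+1-r,n})^\perp$ with $\Sigma_{n+1-r}$ via Proposition \ref{cyclecontr} and Corollary \ref{C:Sigmaper}, use effectivity of the $[\Gamma_i]$ to see the face is full-dimensional in that subspace, and conclude with Proposition \ref{P:AJtheta}. The only difference is cosmetic bookkeeping (you phrase the inclusion via contractibility indices of the $\Gamma_i$ rather than quoting the orthogonality directly, and you spell out the chain/duality statements the paper leaves implicit), so there is nothing to correct.
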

The faces  in \eqref{E:chain-sub} will be called \emph{subordinate faces}, while the faces in \eqref{E:nef-sub} are the nef $\theta$-faces introduced after Proposition \ref{P:AJtheta}. 
Note that 
$$\cone([\Gamma_0])=\Sigma_1\cap \Psefft_n(C_d)$$ 
is an edge (i.e. a perfect extremal ray) of $\Psefft_n(C_d)$, which we call the \emph{subordinate edge}.
On the other hand, we do not expect that  the classes $[\Gamma_i]$ with $0< i\leq \min\{n,g\}$ generate an extremal ray of $\Psefft_n(C_d)$. Using the fact that $x$ is ample, we can prove that they are not extremal, unless, possibly, when $g>d-n\ge n$.


\begin{proof}
Consider the pure $n$-dimensional tautological subvarieties $\{\Gamma_0,\ldots,\Gamma_r\}$ of $C_d$ constructed in Proposition \ref{cyclecontr} (indeed the last subvariety $\Gamma_r$ will be of no use in what follows). 
Since $d-n\geq \min\{n,g\}$ (see Remark \ref{R:r-gon}), we can apply Corollary \ref{C:Sigmaper} and we get that $(\theta^{\geq n+1-r,n})^\perp=\Sigma_{n+1-r}$,
which combined with Proposition \ref{P:AJtheta}\eqref{P:AJtheta1}, gives that 
$$\AJt_n^r(C_d)=\Sigma_{n+1-r}\cap \Psefft_n(C_d).$$
Since $[\Gamma_i]$ are effective classes, we get the following inclusions of cones
\begin{equation}\label{E:incl2}
 \cone([\Gamma_0], \ldots, [\Gamma_{n-r}])  \subseteq\Sigma_{n+1-r}\cap \Psefft_n(C_d) \subset \Sigma_{n+1-r}. 
\end{equation}
Since $\{[\Gamma_0], \ldots, [\Gamma_{n-r}]\}$ is a basis of $\Sigma_{n+1-r}$ by Corollary \ref{C:Sigmaper}, we infer from the inclusions \eqref{E:incl2} that   $\Sigma_{n-r}\cap \Psefft_n(C_d)$ is a full dimensional cone in  $\Sigma_{n-r}$,
and hence it has dimension $n+1-r=\dim (\theta^{\geq n+1-r,n})^\perp$. 
We can therefore apply Proposition \ref{P:AJtheta}\eqref{P:AJtheta2} and get that $\AJt_n^r(C_d)$ is a perfect face of dimension $n+1-r$ whose dual face is equal to $\theta^{\geq n+1-r,n} \cap \Neft^n(C_d)$. 
\end{proof}

\begin{remark}\label{sharpNef}
Let us compare Theorem \ref{thetaNef} with Theorem \ref{thetaPseff} for a given $n$. We are going to use that $\gon_n(C)\leq g+n$ with equality if and only if $n\geq g$, a fact that follows easily from Lemma  \ref{L:gonindex}.
\begin{itemize}
\item If $d\geq n+g$ (which forces $d\geq \gon_n(C)$) then the two theorems coincide. 
\item If $d-n<g\leq n$ then Theorem \ref{thetaPseff} applies while Theorem \ref{thetaNef} does not apply since $d<\gon_n(C)=g+n$ (using that $g\leq n$).  
\item If $n<g$ and $\gon_n(C)\leq d<g+n$ then Theorem \ref{thetaNef} applies but Theorem \ref{thetaPseff} does not apply since $\max\{n,d-n\}<g$.  
\item If $n<g$ and $d<\gon_n(C)$ then neither one of the theorems applies.
\end{itemize}
\end{remark}


\section{Brill-Noether faces in dimension $g-1$}\label{S:BN}


The aim of this subsection is to describe  the tautological Abel-Jacobi faces of $C_d$ in dimension $g-1$. 
We will assume throughout this section that $g\leq d$ (to avoid trivialities) and that $d\leq 2g-2$ since in the case $d>2g-2$ we have a complete description of the tautological Abel-Jacobi faces in Theorem \ref{thetaPseff}. 

We start by using the Brill-Noether varieties in Example \ref{BN=subor} in order to construct  subvarieties of $C_d$ of dimension $g-1$ that are suitably contracted by the Abel-Jacobi morphism $\alpha_d:C_d\to \Pic^d(C)$.

\begin{prop}\label{P:varU}
Let $d$ be such that $g\leq d\leq 2g-2$.  For any $0 \leq i \leq d-g$, consider the embedding $\psi_i : C_{d-i} \to C_d$ defined by $\psi_i(D)=D+ip_0$, where $p_0$ is a fixed point of $C$. Then the subvariety 
$$\Upsilon_i:=\psi_i(C_{d-i}^{d-g+1-i}) \subseteq C_d$$ 
is irreducible of dimension  $g-1$, its class is tautological and  equal to 
\begin{equation}\label{E:classU}
[\Upsilon_i]=\sum_{\alpha=0}^{d-g+1-i}(-1)^{\alpha}\frac{x^{\alpha+i}\theta^{d-g+1-\alpha-i}}{(d-g+1-\alpha-i)!},
\end{equation}
and its image $\alpha_d(\Upsilon_i)$ in $\Pic^d(C)$ has dimension $2g-2-d+i$. 
\end{prop}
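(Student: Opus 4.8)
The plan is to establish the three claims about $\Upsilon_i:=\psi_i(C_{d-i}^{d-g+1-i})$ in turn, reducing everything to the known properties of the Brill--Noether variety $C_{d-i}^{d-g+1-i}$ established in Example \ref{BN=subor} and to the behaviour of $x$ and $\theta$ under the embedding $\psi_i$. First I would check that the integers involved make sense: for $0\le i\le d-g$ we have $g\le d-i\le 2g-2$, so $C_{d-i}^{(d-i)-g+1}=C_{d-i}^{d-g+1-i}$ is exactly the Brill--Noether variety treated in Example \ref{BN=subor} (with $d$ there replaced by $d-i$). By that example it is irreducible of dimension $g-1$. Since $\psi_i$ is a closed embedding, $\Upsilon_i\cong C_{d-i}^{d-g+1-i}$ is irreducible of dimension $g-1$ as well, proving the first assertion.

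For the class computation, I would argue exactly as in the proof of Proposition \ref{cyclecontr}: the image of $\psi_i:C_{d-i}\hookrightarrow C_d$ has class $x^i$, and the Gysin/pushforward along $\psi_i$ sends a tautological class on $C_{d-i}$ to the same polynomial expression in $x,\theta$ on $C_d$ multiplied by $x^i$ (because $\psi_i^*x=x$, $\psi_i^*\theta=\theta$, using the compatibility $\alpha_d\circ i_{p_0}=t_{p_0}\circ\alpha_{d-1}$ recalled in \S\ref{SS:tautring} and the fact that $t_{p_0}^*[\Theta]$ is again a theta divisor). By Example \ref{BN=subor}, equation \eqref{E:clBN}, the class of $C_{d-i}^{d-g+1-i}$ in $R_{g-1}(C_{d-i})$ is $\sum_{\alpha=0}^{d-g+1-i}(-1)^\alpha \frac{x^\alpha\theta^{d-g+1-i-\alpha}}{(d-g+1-i-\alpha)!}$; multiplying by $x^i$ gives precisely \eqref{E:classU}. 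In particular the class is tautological.

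For the last assertion I would compute $\dim\alpha_d(\Upsilon_i)$ directly. We have $\alpha_d(\Upsilon_i)=\alpha_d(\psi_i(C_{d-i}^{d-g+1-i}))=t_{ip_0}(\alpha_{d-i}(C_{d-i}^{d-g+1-i}))=t_{ip_0}(W_{d-i}^{d-g+1-i}(C))$, using $\alpha_d\circ\psi_i=t_{ip_0}\circ\alpha_{d-i}$ (iterating the compatibility of $i_{p_0}$ with Abel--Jacobi maps). Translations are isomorphisms of $\Pic^{\bullet}(C)$, so $\dim\alpha_d(\Upsilon_i)=\dim W_{d-i}^{d-g+1-i}(C)$. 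As observed in Example \ref{BN=subor}, the residuation isomorphism $L\mapsto K_C\otimes L^{-1}$ identifies $W_{d-i}^{d-g+1-i}(C)$ with $W_{2g-2-(d-i)}^0(C)=\Img(\alpha_{2g-2-d+i})$, which has dimension $2g-2-d+i$ (it is the full image of the Abel--Jacobi map in that degree, since $2g-2-d+i\le g-1$). Hence $\dim\alpha_d(\Upsilon_i)=2g-2-d+i$, as claimed.

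The only mildly delicate point, and the one I would treat most carefully, is the justification that pushing forward a tautological class under $\psi_i$ multiplies it by $x^i$ without introducing correction terms; this is the same mechanism already used implicitly in Proposition \ref{cyclecontr}, and it rests on $\psi_i$ being a regular embedding with normal bundle pulled back appropriately together with $\psi_i^*x=x$, $\psi_i^*\theta=\theta$. Everything else is a direct bookkeeping of indices and an appeal to Example \ref{BN=subor}. I do not expect any genuine obstacle here; the statement is essentially the $g-1$ analogue of Proposition \ref{cyclecontr} with the subordinate variety $\Gamma_{d-i}(\l)$ replaced by the Brill--Noether variety $C_{d-i}^{d-g+1-i}$.
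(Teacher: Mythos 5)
Your proposal is correct and follows essentially the same route as the paper: irreducibility and dimension via Example \ref{BN=subor}, the class via the same pushforward-by-$\psi_i$ mechanism (class of the image $=x^i$, $\psi_i^*x=x$, $\psi_i^*\theta=\theta$) used for Proposition \ref{cyclecontr}, and the image dimension via the compatibility $\alpha_d\circ\psi_i=t_{ip_0}\circ\alpha_{d-i}$. The only cosmetic difference is that you compute $\dim W_{d-i}^{d-g+1-i}(C)$ directly by residuation (which is exactly the content of Example \ref{BN=subor}), whereas the paper deduces it from $\dim C_{d-i}^{d-g+1-i}$ minus the fiber dimension via Fact \ref{BNclass}(i); also, no normal-bundle considerations are needed for the pushforward step, the projection formula alone suffices.
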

Note that the subvarieties $\Upsilon_i$ depend on the choice of the base point $p_0$, but their classes $[\Upsilon_i]$ are independent of this choice. 
\begin{proof}
Note that $C_{d-i}^{d-g+1-i}$ is an irreducible subvariety of $C_{d-i}$ of dimension $g-1$ by Example \ref{BN=subor}, whence $\Upsilon_i$ is an irreducible subvariety of $C_d$ of dimension $g-1$.

The class of $\Upsilon_i$ can be computed starting from \eqref{E:clBN} in the same  way as formula \eqref{E:classT} is obtained in   Proposition \ref{cyclecontr}.

Finally, by Fact \ref{BNclass}\eqref{BNclass0}, the dimension of $\alpha_{d-i}(C_{d-i}^{d-g+1-i})\subset \Pic^{d-i}(C)$ is equal to 
$$\dim \alpha_{d-i}(C_{d-i}^{d-g+1-i})=\dim C_{d-i}^{d-g+1-i}-(d-g+1-i)=2g-2-d+i.$$
Since $ \alpha_d\circ \psi_i$ is obtained by composing $\alpha_{d-i}$ with the isomorphism 
$$
\begin{aligned}
 \Pic^{d-i}(C)&\longrightarrow \Pic^d(C) \\
L & \mapsto L(ip_0),
\end{aligned}
$$ 
we conclude that $\dim \alpha_d(\Upsilon_i)=\dim \alpha_{d-i}(C_{d-i}^{d-g+1-i})=2g-2-d+i.$ 
\end{proof} 

Using the subvarieties in Proposition \ref{P:varU}, we can now describe tautological Abel-Jacobi faces  in dimension $g-1$. 

\begin{thm}\label{T:BNfaces}
Let $d$ be such that $g\leq d\leq 2g-2$.
 For any $0 \leq i \leq d-g$,  consider the classes $[\Upsilon_i]\in R_{g-1}(C_d)$ given by \eqref{E:classU} and set 
$$\Omega_{i+1}:=\langle [\Upsilon_0],\ldots,[\Upsilon_{i}]\rangle \subset R_{g-1}(C_d).$$ 
Then  $\AJt_{g-1}^r(C_d)$ is a non-trivial face if and only if $1\leq r \leq d-g+1$, in which case $\AJt_{g-1}^r(C_d)$ is equal to 
$\Omega_{d-g+2-r}\cap \Psefft_{g-1}(C_d)$ and it is a perfect face of dimension $d-g+2-r$. 

Hence, the following chain 
\begin{equation}\label{E:chain-BN}
\Omega_1\cap \Psefft_{g-1}(C_d) \subset \Omega_2\cap \Psefft_{g-1}(C_d) \subset \ldots \subset \Omega_{d-g+1} \cap \Psefft_{g-1}(C_d)\subset \Psefft_{g-1}(C_d)
\end{equation}
is a maximal chain of perfect non-trivial faces of $\Psefft_{g-1}(C_d)$.
The dual chain of the chain in \eqref{E:chain-BN} is equal to
\begin{equation}\label{E:nef-BN}
\theta^{\ge g-1}\cap \Neft^{g-1}(C_d)\subset \theta^{\ge g-2} \cap \Neft^{g-1}(C_d) \subset \ldots \subset \theta^{\ge 2g-1-d} \cap \Neft^{g-1}(C_d) \subset \Neft^{g-1}(C_d).
\end{equation}
\end{thm}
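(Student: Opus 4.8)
The plan is to run the argument of Theorem \ref{thetaNef}, with the Brill--Noether cycles $\Upsilon_i$ of Proposition \ref{P:varU} in place of the subordinate cycles $\Gamma_i$, while carefully tracking the shift in the $\theta$-filtration index coming from the equality $\dim\alpha_d(\Upsilon_i)=2g-2-d+i$ (in Theorem \ref{thetaNef} one had instead $\dim\alpha_d(\Gamma_i)=i$).

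First I would record the two dimension counts. On the nef side, Proposition \ref{P:AJtheta}\eqref{P:AJtheta1} gives $\AJt_{g-1}^r(C_d)=(\theta^{\geq g-r,\, g-1})^\perp\cap\Psefft_{g-1}(C_d)$ together with the bound $\dim\AJt_{g-1}^r(C_d)\le\codim\theta^{\geq g-r,\, g-1}$; and since the hypothesis $g\le d\le 2g-2$ forces $r(g-1)=d-g+1=d-(g-1)$, Proposition \ref{P:theta}\eqref{P:theta1} gives $\codim\theta^{\geq g-r,\, g-1}=\max\{d-g+2-r,\, 0\}$. Hence $\AJt_{g-1}^r(C_d)=\{0\}$ whenever $d-g+2\le r\ (\le g-1)$, and $\dim\AJt_{g-1}^r(C_d)\le d-g+2-r$ when $1\le r\le d-g+1$. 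On the effective side, $\Upsilon_i$ is irreducible of dimension $g-1$ with $\contr_{\alpha_d}(\Upsilon_i)=(g-1)-(2g-2-d+i)=d-g+1-i$, so the tautological class $[\Upsilon_i]$ (Proposition \ref{P:varU}) lies in $\AJt_{g-1}^r(C_d)$ precisely when $i\le d-g+1-r$; equivalently, by \eqref{conttheta}, $[\Upsilon_i]\cdot\theta^{g-r}=0$ for such $i$, so $\Omega_{d-g+2-r}=\langle[\Upsilon_0],\dots,[\Upsilon_{d-g+1-r}]\rangle\subseteq(\theta^{\geq g-r,\, g-1})^\perp$.

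The key point is that $[\Upsilon_0],\dots,[\Upsilon_{d-g}]$ are linearly independent in $R_{g-1}(C_d)$. I would read this off directly from formula \eqref{E:classU}: written in the standard basis $\{x^{d-g+1},x^{d-g}\theta,\dots,\theta^{d-g+1}\}$ of $R_{g-1}(C_d)$, the class $[\Upsilon_i]$ involves only the monomials $x^{d-g+1-j}\theta^{j}$ with $0\le j\le d-g+1-i$, and its top monomial $\tfrac{1}{(d-g+1-i)!}x^{i}\theta^{d-g+1-i}$ occurs with nonzero coefficient, so the $[\Upsilon_i]$ form a staircase. (Alternatively one argues as in Corollary \ref{C:Sigmaper} via Lemma \ref{intSigma}, using the monomials $\theta^{2g-2-d+k}x^{d-g+1-k}$, $0\le k\le d-g$, against which $[\Upsilon_i]$ pairs positively exactly for $k\le i$.) Consequently $\dim\Omega_{d-g+2-r}=d-g+2-r$, so the two bounds of the previous paragraph combine to give $\dim\AJt_{g-1}^r(C_d)=d-g+2-r=\dim(\theta^{\geq g-r,\, g-1})^\perp$ for $1\le r\le d-g+1$. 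This forces $\langle\AJt_{g-1}^r(C_d)\rangle=(\theta^{\geq g-r,\, g-1})^\perp=\Omega_{d-g+2-r}$, hence $\AJt_{g-1}^r(C_d)=\Omega_{d-g+2-r}\cap\Psefft_{g-1}(C_d)$, and Proposition \ref{P:AJtheta}\eqref{P:AJtheta2} then yields that this face is perfect with (perfect) dual face $\theta^{\geq g-r,\, g-1}\cap\Neft^{g-1}(C_d)$.

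Finally, letting $r$ decrease through $d-g+1,d-g,\dots,1$, the faces $\AJt_{g-1}^r(C_d)=\Omega_{d-g+2-r}\cap\Psefft_{g-1}(C_d)$ acquire dimensions $1,2,\dots,d-g+1$; since $\Psefft_{g-1}(C_d)$ is full in $R_{g-1}(C_d)$, which has dimension $d-g+2$, the last of these is a facet, so \eqref{E:chain-BN} is the claimed maximal chain of perfect non-trivial faces of $\Psefft_{g-1}(C_d)$. Dualizing term by term, the indices $g-r$ running over $g-1,g-2,\dots,2g-1-d$, gives \eqref{E:nef-BN}. I expect the only genuine difficulty to be index bookkeeping (identifying which term of the $\theta$-filtration $\Omega_{d-g+2-r}$ is orthogonal to, and staying inside the valid ranges of exponents): all of the actual geometric input, namely the irreducibility and dimension of $C_d^{d-g+1}$ and hence of each $\Upsilon_i$, has already been isolated in Example \ref{BN=subor} and Proposition \ref{P:varU}.
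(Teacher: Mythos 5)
Your argument is correct and follows essentially the same route as the paper's proof: triviality for $r\ge d-g+2$ via Proposition \ref{P:theta}\eqref{P:theta1} and Proposition \ref{P:AJtheta}\eqref{P:AJtheta1}, the cycles $\Upsilon_i$ of Proposition \ref{P:varU} to span $(\theta^{\geq g-r,g-1})^\perp$ by effective classes, and Proposition \ref{P:AJtheta}\eqref{P:AJtheta2} for perfection and the dual nef $\theta$-faces. The only (harmless) deviation is that you get linear independence of the $[\Upsilon_i]$ from the triangular shape of formula \eqref{E:classU} and then conclude by a dimension count, whereas the paper obtains both the independence and the identification $\Omega_i^\perp=\theta^{\geq 2g-2-d+i,g-1}$ directly from Lemma \ref{intSigma} — an alternative you also note.
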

The faces  in \eqref{E:chain-BN} will be called \emph{BN(=Brill-Noether) faces in dimension $g-1$}, while the faces in \eqref{E:nef-BN} are the nef $\theta$-faces introduced after Proposition \ref{P:AJtheta}. 
Note that 
$$\cone([C_d^{d-g+1}])=\Omega_1\cap \Psefft_{g-1}(C_d)$$ 
is an edge (i.e. a perfect extremal ray) of $\Psefft_{g-1}(C_d)$, which we call the \emph{BN edge in dimension $g-1$}.
On the other hand, since the class $x$ is ample, the classes $[\Upsilon_i]$ with $0< i\leq d-g$ cannot generate an extremal ray of $\Psefft_{g-1}(C_d)$.

Note that from Proposition \ref{P:nonzero}\eqref{BNextr2} it follows that $\cone([C_d^{d-g+1}])$ is also an extremal ray of the entire (non-tautological) cone $\Pseff_{g-1}(C_d)$, although we do not know if it is an edge of the entire cone. 


\begin{proof}
Using that  $d-(g-1)\leq g-1$,  Proposition \ref{P:theta}\eqref{P:theta1} gives that 
\begin{equation}\label{E:dim-span}
\dim (\theta^{\geq g-r,g-1})^\perp=\codim \theta^{\geq g-r,g-1}=\max\{d-g+2-r,0\},
\end{equation}
which, together with Proposition \ref{P:AJtheta}\eqref{P:AJtheta1}, implies that $\AJt_{g-1}^r(C_d)$ is trivial unless 
$1\leq r \leq d-g+1$. Therefore, from now until the end of the proof, we fix an index $r$ satisfying the above inequalities. 

Consider the irreducible $(g-1)$-dimensional tautological subvarieties $\{\Upsilon_0,\ldots,\Upsilon_{d-g}\}$ of $C_d$ constructed in Proposition \ref{P:varU}. 
Applying Lemma \ref{intSigma} and using \eqref{E:dim-span}, we get that $\{[\Upsilon_0],\ldots,[\Upsilon_{d-g}]\}$ are linearly independent in $R_{g-1}(C_d)$ and that, for any $1\leq i \leq d-g+1$,
$$\Omega_i^\perp= \theta^{\geq 2g-2-d+i,g-1}.$$
Combining this with Proposition \ref{P:AJtheta}\eqref{P:AJtheta1}, we get that 
$$\AJt_{g-1}^r(C_d)=\Omega_{d-g+2-r}\cap \Psefft_{g-1}(C_d).$$
Since $[\Upsilon_i]$ are effective classes, we get the following inclusions of cones
\begin{equation}\label{E:incl-con2}
 \cone([\Upsilon_0], \ldots, [\Upsilon_{d-g+1-r}])  \subseteq\Omega_{d-g+2-r}\cap \Psefft_{g-1}(C_d) \subset \Omega_{d-g+2-r}. 
\end{equation}
Since $\{[\Upsilon_0], \ldots, [\Upsilon_{d-g+1-r}]\}$ is a basis of $\Omega_{d-g+2-r}$, we infer from the inclusions \eqref{E:incl-con2} that   $\Omega_{d-g+2-r}\cap \Psefft_{g-1}(C_d)$ is a full dimensional cone in  $\Omega_{d-g+2-r}$,
and hence it has dimension $d-g+2-r=\dim (\theta^{\geq g-r,g-1})^\perp$. 
We can therefore apply Proposition \ref{P:AJtheta}\eqref{P:AJtheta2} and get that $\AJt_{g-1}^r(C_d)$ is a perfect face of dimension $d-g+2-r$ whose dual face is equal to $\theta^{\geq g-r,g-1} \cap \Neft^{g-1}(C_d)$. 
\end{proof}

We will now compare BN rays and BN faces in dimension $g-1$ with pseff $\theta$-faces and subordinate faces. 

\begin{remark}\label{R:compaAJ}
Let us compare  Theorems \ref{T:BNfaces} and  \ref{AJtaut} with Theorems \ref{thetaPseff} and \ref{thetaNef}. 
\begin{itemize}
\item BN faces in dimension $g-1$ and BN rays exist in a range where pseff $\theta$-faces do not exist. 

Indeed, if we are in the numerical range of Theorem \ref{T:BNfaces},  then $n=g-1$ and $1\leq d-n\leq g-1$ which implies that $\max\{n,d-n\}=n=g-1<g$. 
On the other hand, if we are under the hypotheses of Theorem \ref{AJtaut}, then $C_d^r$ has dimension  $n:=r+\rho=d+r(d-g-r)$ and codimension $m:=d-n=r(g+r-d)$. Now it easily checked that 
$$\begin{aligned}
n<g & \Leftrightarrow d<g+r-1+\frac{1}{r+1}, \\
m<g & \Leftrightarrow \frac{r-1}{r}g+r<d, \\
\end{aligned}
$$
and both conditions are satisfied because of the assumptions on $d$. This implies that $g>\max\{n,d-n\}$ in any of the two cases, hence pseff $\theta$-faces are not defined.
\item BN faces in dimension $g-1$  and subordinate faces coexist if only if $d=2g-2$ and $n=g-1$, in which case they are equal.

Indeed, if we are in the numerical range of Theorem \ref{T:BNfaces}, then $n=g-1$ and $d\leq 2g-2$.
On the other hand, if we are in the numerical range of Theorem \ref{thetaNef}, then $d\geq \gon_{g-1}(C)=2g-2$ (see Lemma \ref{L:gonindex}); hence we must have $d=2g-2$ and $n=g-1$. In this case, we have that 
$$\Sigma_i\cap \Psefft_{g-1}(C_{2g-2})=\AJt_{g-1}^{g-i}(C_{2g-2})=\Omega_i\cap \Psefft_{g-1}(C_{2g-2}),$$
for any $1\leq i \leq g-1$. Even more is true, namely that since $C_{2g-2-i}^{g-1-i}=\Gamma_{2g-2-i}(|K_C|)$, we have that $\Gamma_i=\Upsilon_i$ for any $0\leq i\leq g-1$. 

\item  BN rays  can coexist with subordinate faces if and only if $\rho:=\rho(g,r,d)=0$, in which case the Brill-Noether ray $\cone([C_d^r])$ is equal to the subordinate edge $\cone([\Gamma_d(\l)])$, where $\l$ is a linear system of degree $d$ and dimension $r$. 

Indeed, suppose that a BN ray $\cone([C_d^r])\subset \Psefft_{r+\rho}(C_d)$ coexists with the subordinate faces of $\Psefft_{r+\rho}(C_d)$. Then it must happen that $d\geq \gon_{r+\rho}(C)$, which using that $C$ is Brill-Noether general, translates into
$$d=\frac{rg+\rho}{r+1}+r\geq \frac{(r+\rho)g}{r+\rho+1}+r+\rho.$$
Now it is easy to see, using that $\rho\geq 0$ because $C$ is a Brill-Noether general curve, that the above inequality is satisfied if and only if $\rho=0$. In this case, we claim that any subordinate variety $\Gamma_0=\Gamma_d(\l)$ where $\l$ is a $\g_d^r$ (as in Proposition \ref{cyclecontr}) is a fiber of $\alpha_d$ and an irreducible component of $C_d^r$,
and $C_d^r$ is numerically equivalent to a positive multiple of $\Gamma_0$. Indeed, since $\rho=0$ and $C$ is a Brill-Noether general curve,  $C_d^{r+1}=\emptyset$, which implies that any linear system $\l$ of dimension $r$ and degree $d$ is a complete linear system $|L|$ associated to  some $L\in W_d^r(C)$, and clearly $\Gamma_d(|L|)=\alpha_d^{-1}(L)$. 
Moreover, $\Gamma_d(|L|)$ has contractibility index with respect to $\alpha_d$ equal to $r$ (since it has dimension $r$ and it is a fiber of $\alpha_d$), hence it is an irreducible component of $C_d^r$ by Fact \ref{BNclass}\eqref{BNclass0}. Conversely, any irreducible component of $C_d^r$ is  of the form $\Gamma_d(|L|)$ for some $L\in W_d^r(C)$. Since the class of $\Gamma_d(|L|)$ does not depend on the chosen $L\in W_d^r(C)$, we conclude that $[C_d^r]$ is a positive multiple of $[\Gamma_0]$. 
\end{itemize}
\end{remark}

\section{Hyperelliptic curves}\label{S:hyper} 

The aim of this Section is to describe the tautological Abel-Jacobi faces in $\Psefft_n(C_d)$ for $C$ an hyperelliptic curve. 
We will assume throughout this section that $d\leq 2g-2$ since in the case $d>2g-2$ we have a complete description of the tautological Abel-Jacobi faces in Theorem \ref{thetaPseff}. 

A crucial role is played by  Brill-Noether varieties for hyperelliptic curves, which we now study.

\begin{prop}\label{P:BNhyper}
Let $C$ be an hyperelliptic curve of genus $g\geq 2$. Fix integers $d$ and $r$ such that $0\leq d\leq 2g-2$ and $\max\{0,d-g+1\}\leq r \leq \frac{d}{2}$. Then $C_d^r$ is irreducible of dimension $d-r$ and its class is a positive multiple of 
\begin{equation}\label{Cdr-hyper}
\sum_{k=0}^{r}\binom{d-r-g}{k}\frac{x^ k\theta^{r-k}}{(r-k)!}.
\end{equation}
\end{prop}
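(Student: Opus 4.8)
The plan is to exploit the very special structure of Brill-Noether loci on a hyperelliptic curve. Let $g^1_2$ be the hyperelliptic pencil, represented by a divisor class $\eta$ of degree $2$ with $h^0(C,\eta)=2$. The classical description (see \cite[Chap. IV, pp. 205--206 and Exercise D]{GAC1}) is that every special linear series on $C$ is \emph{compounded} with $\eta$: concretely, for $0 \le r$ and $d \le 2g-2$ with $r \le d/2$, the variety $W_d^r(C)$ consists precisely of the classes of the form $r\eta + D'$ with $D' \in C_{d-2r}$ an effective divisor, and a given $L \in W_d^r(C)$ lies in $W_d^{r+1}(C)$ exactly when one can absorb one more $\eta$. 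This realizes $W_d^r(C)$ as the image of the finite map $W_{d-2r}^0(C) \cong C_{d-2r}/(\text{something}) \to \Pic^d(C)$; more to the point, $W_d^r(C)$ is irreducible of dimension $\min\{d-2r, g-1\} = d-2r$ (using $d \le 2g-2$ and $r \ge d-g+1$), being isomorphic to $W_{d-2r}^0(C) = \Im(\alpha_{d-2r})$ via $L \mapsto L(-r\eta)$. By Fact \ref{BNclass}\eqref{BNclass0}, the morphism $(\alpha_d)_{|C_d^r}: C_d^r \twoheadrightarrow W_d^r(C)$ is generically a $\PP^r$-bundle, so $C_d^r$ is irreducible of dimension $r + (d-2r) = d-r$, which is the first assertion.

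For the class, I would first note that $C_d^r$ then has pure (irreducible) dimension $d-r$, but this is \emph{not} in general the Brill-Noether expected dimension $r+\rho(g,r,d)$, so Fact \ref{BNclass}\eqref{BNclass2} does not apply directly and the Harris--Tu / Giambelli formula for $c_d^r$ is unavailable. Instead I would identify $C_d^r$ geometrically as a \emph{subordinate variety}. The key observation is that $D \in C_d^r$ if and only if $D \le E$ for some $E$ in the complete linear system $|r\eta + p_1 + \dots + p_{d-2r}|$ for suitable $p_i$ --- more cleanly, $C_d^r$ coincides with the image $\psi\big(\Gamma_{d}(\text{incidence of } r\eta)\big)$. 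The cleanest route: since on a hyperelliptic curve $\dim|r\eta| = r$ for $r \le g$, the subordinate variety $\Gamma_d(\l)$ for $\l$ a suitable $\g^r$ obtained by adding base points to $r\eta$, combined with varying those base points, sweeps out exactly $C_d^r$; alternatively, use the isomorphism $C_d^r \cong$ (a $\PP^r$-bundle over $C_{d-2r}$) to pull the class back. Concretely I would use the map $C_{d-2r} \times_{} \PP(\ldots) \to C_d$, or more efficiently, write $[C_d^r]$ via the pushforward from the correspondence $\{(D, E) : D \le E,\ E \in |L|,\ L \in W_d^r\}$.

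The efficient way to finish is to recognize \eqref{Cdr-hyper} as the class of a subordinate variety $\Gamma_d(\l)$ coming from Fact \ref{subcyc}: taking $\l$ a $\g^r_l$ with $l = d$ (so that $\l$ is itself the relevant hyperelliptic-compounded system after adding $d-2r$ base points is not quite degree $d$ — one must instead work on $C_{d}$ directly). Comparing with Fact \ref{subcyc}, the class $[\Gamma_d(\g^r_d)] = \sum_{k=0}^{d-r}\binom{d-g-r}{k}\frac{x^k\theta^{d-r-k}}{(d-r-k)!}$ — but for hyperelliptic $C_d^r$ one wants $\min\{r, d-r\} = r$ generators, i.e. the truncated sum up to $k=r$. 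I would therefore instead argue: $C_d^r$ is the image under $\psi_{d-2r}: C_{2r} \to C_d$, $D\mapsto D + (d-2r)p_0$, restricted appropriately? No — the cleanest is: $C_d^r = \psi_{d-2r}(C_{2r}^r) = \psi_{d-2r}(\Gamma_{2r}(|r\eta|))$ when $d-2r$ base points are fixed, then observe $C_d^r$ is the union of these as the base points vary, hence its class equals $x^{d-2r}\cdot[C_{2r}^r]$ up to a positive constant only if $d=2r$; in general one computes directly. Given the length constraints, in the actual proof I would: (1) establish irreducibility and dimension $d-r$ as above; (2) compute $[C_d^r]$ by pushing forward along $\alpha_d$ the class of the universal $\PP^r$-bundle over $W_d^r(C) \cong W_{d-2r}^0(C)$, using $[W_{d-2r}^0(C)] = \Theta^{g-(d-2r)}/(g-(d-2r))!$ in $\Pic(C)$ and the projection formula with the $x$ and $\theta$ relations; (3) check the resulting polynomial in $x, \theta$ equals a positive multiple of \eqref{Cdr-hyper} by comparing coefficients, using Proposition \ref{basetaut}\eqref{basetaut1} to reduce in $R^*(C_d)$.

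The main obstacle I anticipate is step (2)–(3): turning the geometric description of $C_d^r$ as a $\PP^r$-bundle over $W_{d-2r}^0(C)$ into the explicit tautological class, because the bundle is not a projectivization of an obviously described vector bundle and because the ambient intersection ring $N^*(C_d)$ must be navigated via the relations in Proposition \ref{basetaut}. A cleaner alternative that sidesteps this: show that $C_d^r$ is numerically equivalent to a positive multiple of a subordinate variety $\Gamma_d(\l)$ (for $\l$ of dimension $r$) — indeed on a hyperelliptic curve every $\g^r$ of degree $d$ with $d\le 2g-2$, $r\le d/2$ is compounded with $\eta$, and $\Gamma_d(\l) \subseteq C_d^r$ with both irreducible of the same dimension $d-r$ when $\l$ is chosen as a complete such system minus base points, forcing equality $\Gamma_d(\l)=C_d^r$; then Fact \ref{subcyc} applied with $s=r$, $l = d + (\text{base points})$ — after absorbing base points one gets exactly the truncated sum \eqref{Cdr-hyper} because the binomial $\binom{l-g-s}{k}$ vanishes for $k$ beyond $r$ in the relevant hyperelliptic case, since $l - g - s$ can be arranged. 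This identification with a subordinate variety, plus Fact \ref{subcyc}, is the route I would ultimately take, and verifying that the truncation is automatic (i.e. the higher-degree binomial coefficients are absorbed or vanish after using $\theta^{g+1}=0$) is the one computational point requiring care.
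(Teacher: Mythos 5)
Your first half---irreducibility and the dimension count $\dim C_d^r=d-r$---is essentially correct and close to what the paper does: in this range every $L\in W_d^r(C)$ is special (Riemann--Roch, using $r\geq d-g+1$), hence of the form $\O_C(r\g^1_2)(D')$ with $D'\in C_{d-2r}$, so $W_d^r(C)$ is a translate of $W^0_{d-2r}(C)$, irreducible of dimension $d-2r$, and Fact \ref{BNclass}\eqref{BNclass0} then gives that $C_d^r$ is irreducible of dimension $d-r$. (The paper gets this for $d\leq g$ from the finite surjection $C_r\times C_{d-2r}\to C_d^r$, $(E,D)\mapsto E+\iota(E)+D$, and for $d>g$ by residuation, but your route is fine.)

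The class computation, however, has a genuine gap. In your final (preferred) route you identify $C_d^r$ with a subordinate variety $\Gamma_d(\l)$ for ``$\l$ of dimension $r$'' and propose to apply Fact \ref{subcyc} with $s=r$. This cannot work: $\Gamma_d(\l)$ has pure dimension $s$, so with $s=r$ it has dimension $r$ while $C_d^r$ has dimension $d-r$, and Fact \ref{subcyc} with $s=r$ produces a class of codimension $d-r$, not the codimension-$r$ class \eqref{Cdr-hyper} (they agree only when $d=2r$). The correct identification---which is exactly what the paper proves in its $d>g$ case---is $C_d^r=\Gamma_d\bigl((d-r)\g^1_2\bigr)$, the subordinate variety of the $(d-r)$-\emph{dimensional} series $(d-r)\g^1_2$ of degree $2(d-r)$ (note $d-r\leq g$ in this range, so this is a $\g^{d-r}_{2(d-r)}$): by Riemann--Roch and the hyperelliptic structure of special series, $D\in C_d^r$ if and only if $\omega_C(-D)=\O_C((r+g-d-1)\g^1_2)(E)$ for some $E\in C_{d-2r}$, and since $\omega_C=\O_C((g-1)\g^1_2)$ this says precisely that $D+E\in (d-r)\g^1_2$ for some $E\in C_{d-2r}$; then Fact \ref{subcyc} with $s=d-r$, $l=2(d-r)$ gives \eqref{Cdr-hyper} on the nose. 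None of this is in your proposal: your ``forcing equality'' dimension argument is stated for the wrong $\l$, and the truncation of the sum at $k=r$ comes from $d-s=r$, not from any vanishing of the binomials $\binom{l-g-s}{k}$ (which, for negative top entry, never vanish with the paper's convention). Your alternative route (2)--(3), recovering $[C_d^r]$ by pushing forward along $\alpha_d$, also cannot succeed as described, since $(\alpha_d)_*$ has a large kernel on $N_{d-r}(C_d)$ and the pushforward does not determine the class upstairs; you correctly flag this obstacle but offer no way around it. For comparison, the paper's actual argument splits into two cases: for $d\leq g$ it writes $[C_d^r]$ as a positive multiple of $A^{d-2r}([\Gamma_{2r}(r\g^1_2)])$, where $A$ is the push operator of \cite[Def. 2.2]{BKLV}, and evaluates this by induction; for $d>g$ it uses the residuation identification above, which is where your subordinate-variety idea should have landed.
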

\begin{proof}
We will denote by $\g^1_2$ the hyperelliptic linear series on $C$, by $\O_C(\g_2^1)$ its associated line bundle and by $\iota$ the hyperelliptic involution on $C$.

Let us distinguish two cases, according to whether or not $d\leq g$. 

If $d\leq g$ then any $\g_d^r$ on $C$ is of the form $r\g^1_2+p_1+\ldots+p_{d-2r}$, where $p_1,\ldots,p_{d-2r}$ are points of $C$ 
such that no two of them are conjugate under the hyperelliptic involution \color{black}(see \cite[p.13]{GAC1}). 
Therefore, $C_d^r$ is the image of the finite morphism
$$
\begin{aligned}
C_r\times C_{d-2r} & \longrightarrow C_d \\
(E,D) \ \ \ \ & \mapsto E+\iota(E)+D,
\end{aligned}
$$
from which we deduce that $C_d^r$ is irreducible of dimension $d-r$. 
Moreover, the class $[C_d^r]$ is a positive multiple (depending on its scheme-structure) of $A^{d-2r}(\Gamma_{2r}(r\g^1_2))$, where $A$ is the push operator of \cite[Def. 2.2]{BKLV} and $\Gamma_{2r}(r\g^1_2)$ is the subordinate variety of \eqref{D:sublocus}. 
Combining  Facts \ref{subcyc} and \cite[Fact 2.9(ii)]{BKLV}, one can easily prove by induction on $0\leq i$ that 
$$A^{i}(\Gamma_{2r}(r\g^1_2))= i! \sum_{k=0}^{r}\binom{r-g+i}{k}\frac{x^ k\theta^{r-k}}{(r-k)!},$$ 
which for $i=d-2r$ gives the desired formula.

If $d>g$ then, using the isomorphism $W_d^r(C)\stackrel{\cong}{\longrightarrow} W_{2g-2-d}^{r-d+g-1}(C)$ obtained by sending $L$ into $\omega_C\otimes L^{-1}$ and the fact that $W_{2g-2-d}^{r-d+g-1}(C)$ is irreducible of dimension equal to $2g-2-d-2(r-d+g-1)=d-2r$ by what proved in the previous case for $C_{2g-2-d}^{r-d+g-1}$,  we get that $W_d^r(C)$ is irreducible 
of dimension equal to $d-2r$. Hence $C_d^r$ is irreducible of dimension $d-r$ by Fact \ref{BNclass}\eqref{BNclass0}. 
Moreover, an effective degree-$d$ divisor $D$ on $C$ belongs to $C_d^r$ if and only if $\omega_C(-D)\in  W_{2g-2-d}^{r-d+g-1}(C)$, which by the previous case is equivalent to 
saying that $\omega_C(-D)=\O_C((r-d+g-1)\g_2^1)(E)$ for some $E\in C_{d-2r}$. Using that $\omega_C=\O_C((g-1)\g_2^1)$, 
we conclude that
$$D\in C_d^r\Longleftrightarrow D+E\in (d-r)\g_2^1 \text { for some }E\in C_{d-2r}.$$ 
Therefore, the class of $C_d^r$ is a positive multiple  of the subordinate variety $\Gamma_d((d-r)\g_2^1)$ whose class is given by \eqref{Cdr-hyper} according to Fact  \ref{subcyc}.
\end{proof}

\begin{cor}\label{C:varhyp}
Let $C$ be an hyperelliptic curve of genus $g\geq 2$ and fix integers $n,d$ such that $0 \le d-n\leq  n <g$ (which implies that $0\leq d \leq 2g-2$).
For any $0 \leq i \leq d-n$, consider the embedding $\psi_i : C_{d-i} \to C_d$ defined by $\psi_i(D)=D+ip_0$, where $p_0$ is a fixed point of $C$. Then the subvariety 
$$\Upsilon_i^H:=\psi_i(C_{d-i}^{d-n-i}) \subseteq C_d$$ 
is irreducible of dimension  $n$, its class is tautological and it is  equal, up  to a positive multiple,  to 
\begin{equation}\label{E:classH}
[\Upsilon_i]^H:=\sum_{k=0}^{d-n-i}\binom{n-g}{k}\frac{x^{k+i}\theta^{d-n-i-k}}{(d-n-i-k)!},
\end{equation}
and its image $\alpha_d(\Upsilon_i^H)$ in $\Pic^d(C)$ has dimension $2n-d+i$.
\end{cor}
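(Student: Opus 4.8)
The plan is to read off everything from Proposition~\ref{P:BNhyper} applied to the symmetric product $C_{d-i}$, and then to transfer the conclusions along the embedding $\psi_i$ exactly as was done in the proofs of Propositions~\ref{cyclecontr} and~\ref{P:varU}. First I would check that Proposition~\ref{P:BNhyper} applies to the Brill--Noether variety $C_{d-i}^{d-n-i}\subseteq C_{d-i}$, i.e.\ with the degree $d-i$ and the index $r=d-n-i$. Indeed, $0\le i\le d-n$ together with $d\le 2g-2$ give $0\le n\le d-i\le 2g-2$; the condition $\max\{0,(d-i)-g+1\}\le d-n-i$ is equivalent to $d-n-i\ge 0$ (that is, $i\le d-n$) and to $(d-i)-g+1\le d-n-i$ (that is, $n\le g-1$), both of which hold; and $d-n-i\le\frac{d-i}{2}$ is equivalent to $d-i\le 2n$, which follows from $d\le 2n$. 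Hence $C_{d-i}^{d-n-i}$ is irreducible of dimension $(d-i)-(d-n-i)=n$, and its class in $R_n(C_{d-i})$ is a positive multiple of
$$\sum_{k=0}^{d-n-i}\binom{(d-i)-(d-n-i)-g}{k}\frac{x^k\theta^{d-n-i-k}}{(d-n-i-k)!}=\sum_{k=0}^{d-n-i}\binom{n-g}{k}\frac{x^k\theta^{d-n-i-k}}{(d-n-i-k)!}.$$

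Next I would pass to $\Upsilon_i^H=\psi_i(C_{d-i}^{d-n-i})$. Since $\psi_i$ is a closed embedding, $\Upsilon_i^H$ is irreducible of dimension $n$. For the class, I would argue as in the proof of Proposition~\ref{cyclecontr}: the image of $\psi_i$ has class $x^i$ in $R^i(C_d)$ and $\psi_i^*$ carries the classes $x,\theta$ on $C_d$ to the classes $x,\theta$ on $C_{d-i}$, so $[\Upsilon_i^H]$ is obtained by taking the class of $C_{d-i}^{d-n-i}$ computed above, reading it as a class in $R_{n+i}(C_d)$ via the same monomials in $x$ and $\theta$, and then multiplying by $x^i$. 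This yields that $[\Upsilon_i^H]$ is a positive multiple of $[\Upsilon_i]^H$ as in~\eqref{E:classH}; in particular it is tautological.

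Finally, for $\dim\alpha_d(\Upsilon_i^H)$: since $\alpha_d\circ\psi_i$ is $\alpha_{d-i}$ followed by the isomorphism $\Pic^{d-i}(C)\iso\Pic^d(C)$, $L\mapsto L(ip_0)$, it is enough to compute $\dim\alpha_{d-i}(C_{d-i}^{d-n-i})=\dim W_{d-i}^{d-n-i}(C)$. By Fact~\ref{BNclass}\eqref{BNclass0} (or directly from the description of $W_{d-i}^{d-n-i}(C)$ in the proof of Proposition~\ref{P:BNhyper}) the restriction of $\alpha_{d-i}$ to $C_{d-i}^{d-n-i}$ is generically a $\PP^{d-n-i}$-fibration onto $W_{d-i}^{d-n-i}(C)$, so $\dim W_{d-i}^{d-n-i}(C)=n-(d-n-i)=2n-d+i$, as claimed. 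The argument is essentially routine; the only points that need a little care are checking that the numerical hypotheses of Proposition~\ref{P:BNhyper} survive the replacement of $(d,r)$ by $(d-i,d-n-i)$ and that the binomial argument simplifies as $(d-i)-(d-n-i)-g=n-g$, handling the degenerate cases $d-n-i=0$ (where $C_{d-i}^{0}=C_{d-i}$ and $d-i=n<g$) and $2n-d+i=0$ (where $W_{d-i}^{d-n-i}(C)$ is a single point), and tracking the class through $\psi_i^*$ exactly as in Proposition~\ref{cyclecontr}.
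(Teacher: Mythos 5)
Your proposal is correct and follows essentially the same route as the paper: apply Proposition \ref{P:BNhyper} to $C_{d-i}^{d-n-i}$ (after checking the numerical hypotheses, as you do), transfer the class along $\psi_i$ exactly as in the proof of Proposition \ref{cyclecontr}, and compute $\dim\alpha_d(\Upsilon_i^H)$ via the translation compatibility as in Proposition \ref{P:varU}. Your verification that the hypotheses of Proposition \ref{P:BNhyper} hold for the pair $(d-i,\,d-n-i)$ and your handling of the case $d-n-i=0$ are details the paper leaves implicit, but the argument is the same.
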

Note that the subvarieties $\Upsilon_i^H$ depend on the choice of the base point $p_0$, but their classes $[\Upsilon_i^H]$, which coincide with $[\Upsilon_i]^H$ up to positive multiples, are independent of this choice.
\begin{proof}
Note that $C_{d-i}^{d-n-i}$ is an irreducible subvariety of $C_{d-i}$ of dimension $n$ by Proposition \ref{P:BNhyper}, whence $\Upsilon_i^H$ is an irreducible subvariety of $C_d$ of dimension $n$. 

The class of $\Upsilon_i^H$ can be computed, up to a positive multiple, starting from \eqref{Cdr-hyper} in the same  way as formula \eqref{E:classT} is obtained in   Proposition \ref{cyclecontr}.
Finally, the dimension of  $\alpha_d(\Upsilon_i^H)$ can be computed similarly to what was done in Proposition  \ref{P:varU}. 
\end{proof} 

Using the subvarieties constructed in Proposition \ref{cyclecontr} and the ones constructed in Corollary \ref{C:varhyp}, we can now describe tautological Abel-Jacobi faces 
for hyperelliptic curves. 

\begin{thm}\label{T:hyper}
Let $C$ be an hyperelliptic curve of genus $g\geq 2$ and fix integers $n,d$ such that $0 \le n, d-n <g$ (which implies that $0\leq d \leq 2g-2$). 
\begin{enumerate}[(i)]
\item \label{T:hyper1} Assume that $d\geq 2n$. 

For any $0\leq i \leq \min\{n,g\}$, consider the classes $[\Gamma_i]\in R_n(C_d)$ given by \eqref{E:classT} and set $\Sigma_{i+1}:=\langle [\Gamma_0],\ldots,[\Gamma_{i}]\rangle \subset R_n(C_d)$. 
Then, for any $1\leq r\leq n$,  $\AJt_n^r(C_d)$ is a non-trivial face, is equal to $\Sigma_{n+1-r}\cap \Psefft_n(C_d)$ and it is a perfect face of dimension $n+1-r$. Hence, the following chain 
\begin{equation}\label{E:ch-hyp1}
\Sigma_1\cap \Psefft_n(C_d) \subset \Sigma_2\cap \Psefft_n(C_d) \subset \ldots \subset \Sigma_{n} \cap \Psefft_n(C_d)\subset \Psefft_n(C_d)
\end{equation}
is a maximal chain of perfect non-trivial faces of $\Psefft_n(C_d)$.

\item \label{T:hyper2} Assume that $d\leq 2n$.

 For any $0 \leq i \leq d-n$,  consider the classes $[\Upsilon_i]^H\in R_{n}(C_d)$ given by \eqref{E:classH} and set $\Omega_{i+1}^H:=\langle [\Upsilon_0]^H,\ldots,[\Upsilon_{i}]^H\rangle \subset R_{n}(C_d)$. 
Then  $\AJt_{n}^r(C_d)$ is a non-trivial face if and only if $1\leq r \leq d-n$, in which case $\AJt_{n}^r(C_d)$ is equal to 
$\Omega_{d-n+1-r}^H \cap \Psefft_{n}(C_d)$ and it is a perfect face of dimension $d-n+1-r$. 
Hence, the following chain 
\begin{equation}\label{E:ch-hyp2}
\Omega_1^H\cap \Psefft_{n}(C_d) \subset \Omega_2^H\cap \Psefft_{n}(C_d) \subset \ldots \subset \Omega_{d-n}^H \cap \Psefft_{n}(C_d)\subset \Psefft_{n}(C_d)
\end{equation}
is a maximal chain of perfect non-trivial faces of $\Psefft_{n}(C_d)$.

\end{enumerate}
The dual chain of both  the chains  in \eqref{E:ch-hyp1} and \eqref{E:ch-hyp2} is equal to
\begin{equation}\label{E:nef-hyp}
\theta^{\ge n}\cap \Neft^n(C_d)\subset \theta^{\ge n-1} \cap \Neft^n(C_d) \subset \ldots \subset \theta^{\ge \max\{1,2n-d+1\}} \cap \Neft^n(C_d) \subset \Neft^n(C_d).
\end{equation}
\end{thm}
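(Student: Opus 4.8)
The strategy mirrors exactly the proofs of Theorems~\ref{thetaPseff}, \ref{thetaNef} and \ref{T:BNfaces}, except that we need Proposition~\ref{P:BNhyper} (and its consequence Corollary~\ref{C:varhyp}) to replace Brill--Noether generality by the special geometry of hyperelliptic curves. First I would fix $n,d$ with $0\le n,d-n<g$ and observe that for a hyperelliptic curve we have $\gon_n(C)=2n$ for $n\le g-1$ by Lemma~\ref{L:gonindex}; hence $C_d^n\neq\emptyset$ precisely when $d\ge 2n$, which is the case distinction of the theorem. In case~\eqref{T:hyper1}, $d\ge 2n$, I would first record that $d-n\ge n=r(n)$ (so $r(n)=\min\{n,g\}=n$ since $n<g$), which puts us in the hypotheses needed for Corollary~\ref{C:Sigmaper}: the classes $[\Gamma_0],\dots,[\Gamma_n]$ from Proposition~\ref{cyclecontr} (with $\l$ any $\g^n_d$, which exists because $d\ge 2n=\gon_n(C)$) are linearly independent, $\langle[\Gamma_0],\dots,[\Gamma_{i}]\rangle^\perp\cap R^n(C_d)=\theta^{\ge i+1,n}$, and $\codim\theta^{\ge i+1,n}=i+1$. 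Then, exactly as in the proof of Theorem~\ref{thetaNef}, Proposition~\ref{P:AJtheta}\eqref{P:AJtheta1} gives $\AJt_n^r(C_d)=\Sigma_{n+1-r}\cap\Psefft_n(C_d)$, the inclusions $\cone([\Gamma_0],\dots,[\Gamma_{n-r}])\subseteq\Sigma_{n+1-r}\cap\Psefft_n(C_d)\subset\Sigma_{n+1-r}$ together with the fact that the $[\Gamma_i]$ form a basis force this cone to be full-dimensional in $\Sigma_{n+1-r}$, so its dimension is $n+1-r=\dim(\theta^{\ge n+1-r,n})^\perp$, and Proposition~\ref{P:AJtheta}\eqref{P:AJtheta2} finishes: $\AJt_n^r(C_d)$ is a perfect face of dimension $n+1-r$ with dual $\theta^{\ge n+1-r,n}\cap\Neft^n(C_d)$, giving the maximal chain~\eqref{E:ch-hyp1} and its dual.

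For case~\eqref{T:hyper2}, $d\le 2n$ (so $d-n\le n$, hence $r(n)=\min\{n,d-n,g\}=d-n$), I would run the argument of Theorem~\ref{T:BNfaces} verbatim with $C_{d-i}^{d-n-i}$ and $\Upsilon_i^H$ in place of $C_{d-i}^{d-g+1-i}$ and $\Upsilon_i$. Concretely: by Proposition~\ref{P:theta}\eqref{P:theta1} applied with $r(n)=d-n\le n\le g$, $\dim(\theta^{\ge n+1-r,n})^\perp=\codim\theta^{\ge n+1-r,n}=\max\{d-n+1-r,0\}$, so Proposition~\ref{P:AJtheta}\eqref{P:AJtheta1} already shows $\AJt_n^r(C_d)$ is trivial unless $1\le r\le d-n$. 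Fix such an $r$. Corollary~\ref{C:varhyp} gives irreducible subvarieties $\Upsilon_0^H,\dots,\Upsilon_{d-n}^H$ of dimension $n$ with tautological classes and $\dim\alpha_d(\Upsilon_i^H)=2n-d+i$; feeding these into Lemma~\ref{intSigma} (exactly as $[\Upsilon_i]$ was fed in the proof of Theorem~\ref{T:BNfaces}) shows the $[\Upsilon_i]^H$ are linearly independent and $\Omega_i^{H\,\perp}=\theta^{\ge 2n-d+i,n}$ for $1\le i\le d-n+1$ (here using $\min\{n,g\}=n$ and the count of independent classes equal to $d-n+1=r(n)+1=\dim R^n(C_d)$). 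Hence $\AJt_n^r(C_d)=\Omega_{d-n+1-r}^H\cap\Psefft_n(C_d)$; effectivity of the $[\Upsilon_i]^H$ gives $\cone([\Upsilon_0]^H,\dots,[\Upsilon_{d-n-r}]^H)\subseteq\Omega_{d-n+1-r}^H\cap\Psefft_n(C_d)\subset\Omega_{d-n+1-r}^H$, which is full-dimensional because those $[\Upsilon_i]^H$ are a basis of $\Omega^H_{d-n+1-r}$; and Proposition~\ref{P:AJtheta}\eqref{P:AJtheta2} yields that $\AJt_n^r(C_d)$ is perfect of dimension $d-n+1-r$ with dual $\theta^{\ge n+1-r,n}\cap\Neft^n(C_d)=\theta^{\ge 2n-d+r,n}\cap\Neft^n(C_d)$. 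Non-triviality for $1\le r\le d-n$ follows either from this dimension count (which is $\ge1$) or from Proposition~\ref{P:nonzero} together with the fact (Proposition~\ref{P:BNhyper}) that $C_d^r$ is irreducible, hence tautological, of dimension $d-r\ge n$. The bottom face $\Omega_1^H\cap\Psefft_n(C_d)=\cone([C_d^{d-n}])$ because $[\Upsilon_0]^H$ is a positive multiple of $[C_d^{d-n}]$ by Proposition~\ref{P:BNhyper} (and $[\Upsilon_0^H]=[\psi_0(C_d^{d-n})]=[C_d^{d-n}]$).

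Finally I would assemble the dual chains: in both cases the dual of $\AJt_n^r(C_d)$ is $\theta^{\ge n+1-r,n}\cap\Neft^n(C_d)$, and as $r$ ranges over its allowed interval ($1\le r\le n$ in case (i), $1\le r\le d-n$ in case (ii)) the index $n+1-r$ ranges over $1\le n+1-r\le n$ resp. $2n-d+1\le n+1-r\le n$, which is precisely~\eqref{E:nef-hyp}; that these form an increasing chain of perfect faces whose dimensions increase by one at each step (so a \emph{maximal} chain) is immediate from $\codim\theta^{\ge i,n}$ being $i$ resp.\ $i-(2n-d)$ by Proposition~\ref{P:theta}\eqref{P:theta1}. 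The only genuinely new input beyond the template of the previous sections is Proposition~\ref{P:BNhyper}: everything hinges on knowing that for a hyperelliptic curve $C_d^r$ (and more importantly $C_{d-i}^{d-n-i}$) is irreducible of the stated dimension with tautological class, and I expect verifying that the dimension/codimension bookkeeping lines up with $r(n)=\min\{n,d-n\}$ in both cases---so that Corollary~\ref{C:Sigmaper} in case (i) and the Lemma~\ref{intSigma} count in case (ii) really do produce a full basis of $R^n(C_d)$---to be the main point requiring care, though it is routine given Proposition~\ref{P:BNhyper} and Remark~\ref{R:r-gon}.
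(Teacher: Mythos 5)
Your proposal is correct and follows essentially the same route as the paper: part (i) is exactly the paper's argument (the paper simply cites Theorem \ref{thetaNef} after noting $\gon_n(C)=2n\leq d$, which you re-run in detail), and part (ii) reproduces the paper's proof verbatim in structure — Proposition \ref{P:theta}\eqref{P:theta1} for the triviality range, Corollary \ref{C:varhyp} plus Lemma \ref{intSigma} for linear independence and $(\Omega_i^H)^\perp=\theta^{\geq 2n-d+i,n}$, then Proposition \ref{P:AJtheta} and effectivity of the $[\Upsilon_i]^H$ for perfectness, dimension, and the dual nef $\theta$-chain. The extra remarks (non-triviality via Proposition \ref{P:nonzero}, identification of $\Omega_1^H\cap\Psefft_n(C_d)$ with $\cone([C_d^{d-n}])$) are consistent with the paper's discussion following the theorem.
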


Note that the faces  in \eqref{E:ch-hyp1} are the subordinate faces introduced in Theorem \ref{thetaNef}, while the faces in \eqref{E:nef-hyp} are the nef $\theta$-faces introduced after Proposition \ref{P:AJtheta}. The faces of \eqref{E:ch-hyp2} are new, and they will be called  \emph{hyperelliptic BN(=Brill-Noether) faces}.
Note that 
$$\cone([C_d^{d-n}])=\Omega_1^H\cap \Psefft_n(C_d)$$ 
is an edge (i.e. a perfect extremal ray) of $\Psefft_n(C_d)$, which we call the \emph{hyperelliptic BN(=Brill-Noether) edge}.

Note that from Proposition \ref{P:nonzero}\eqref{BNextr2} it follows that the hyperelliptic BN edge $\cone([C_d^{d-n}])$ is also an extremal ray of the entire (non-tautological) cone $\Pseff_{n}(C_d)$, although we do not know if it is an edge of the entire cone. 

\begin{proof}
Part \eqref{T:hyper1} follows from  Theorem \ref{thetaNef}, using that $\gon_n(C)=2n$ for $C$ hyperelliptic and $n<g$ by Lemma \ref{L:gonindex}. 

Let us now prove part \eqref{T:hyper2}. 
Using that  $d-n\leq n\leq g$,  Proposition \ref{P:theta}\eqref{P:theta1} gives that 
\begin{equation}\label{E:dim-sp}
\dim (\theta^{\geq n+1-r,n})^\perp=\codim \theta^{\geq n+1-r,n}=\max\{d-n+1-r,0\},
\end{equation}
which, together with Proposition \ref{P:AJtheta}\eqref{P:AJtheta1}, implies that $\AJt_{n}^r(C_d)$ is trivial unless 
$1\leq r \leq d-n$. Therefore, from now until the end of the proof, we fix an index $r$ satisfying the above inequalities. 

Consider the irreducible $n$-dimensional tautological subvarieties $\{\Upsilon_0^H,\ldots,\Upsilon_{d-n}^H\}$ of $C_d$ constructed in Corollary \ref{C:varhyp}. 
Applying Lemma \ref{intSigma} and using \eqref{E:dim-sp}, we get that $\{[\Upsilon_0]^H,\ldots,[\Upsilon_{d-n}]^H\}$ are linearly independent in $R_{n}(C_d)$ 
and that, for any $1\leq i \leq d-n$,
$$(\Omega_i^H)^\perp= \theta^{\geq 2n-d+i,n}.$$
Combining this with Proposition \ref{P:AJtheta}\eqref{P:theta1}, we get that 
$$\AJt_{n}^r(C_d)=\Omega_{d-n+1-r}^H\cap \Psefft_{n}(C_d).$$
Since $[\Upsilon_i]^H$ are $\bbQ$-effective classes, we get the following inclusions of cones
\begin{equation}\label{E:in-con}
 \cone([\Upsilon_0]^H, \ldots, [\Upsilon_{d-n-r}]^H)  \subseteq\Omega^H_{d-n+1-r}\cap \Psefft_{n}(C_d) \subset \Omega^H_{d-n+1-r}. 
\end{equation}
Since $\{[\Upsilon_0]^H, \ldots, [\Upsilon_{d-n-r}]^H\}$ is a basis of $\Omega_{d-n+1-r}^H$, we infer from the inclusions \eqref{E:in-con} that   $\Omega^H_{d-n+1-r}\cap \Psefft_{n}(C_d)$ is a full dimensional cone in  $\Omega^H_{d-n+1-r}$, and hence it has dimension $d-n+1-r=\dim (\theta^{\geq n+1-r,n})^\perp$. 
We can therefore apply Proposition \ref{P:AJtheta}\eqref{P:AJtheta2} and get that $\AJt_{n}^r(C_d)$ is a perfect face of dimension $d-n+1-r$ whose dual face is equal to $\theta^{\geq n+1-r,n} \cap \Neft^{n}(C_d)$. 
\end{proof}

\section*{Acknowledgements}
We would like to thank Dawei Chen, Izzet Coskun and Brian Lehmann for helpful discussions.

\end{document}